\documentclass[11pt]{amsart}
\usepackage{amsmath,amssymb,amsthm,mathtools}
\usepackage{microtype}
\usepackage{xcolor}
\usepackage[colorlinks=true]{hyperref}
\numberwithin{equation}{section}
\mathtoolsset{showonlyrefs}
\allowdisplaybreaks[4]
\theoremstyle{plain}
\newtheorem{theorem}{Theorem}[section]
\newtheorem{lemma}[theorem]{Lemma}
\newtheorem{proposition}[theorem]{Proposition}

\theoremstyle{definition}
\newtheorem{definition}[theorem]{Definition}
\theoremstyle{remark}

\newcommand{\R}{\mathbb{R}}
\newcommand{\C}{\mathbb{C}}
\newcommand{\dd}{\,\mathrm{d}}
\newcommand{\Tr}{\operatorname{Tr}}
\newcommand{\EGL}{\mathcal E_{\mathrm{GL}}}
\newcommand{\Acal}{\mathcal A}
\newcommand{\Dcal}{\mathcal D}
\begin{document}

\title[Global Minimality of the Degree-One Vortex]
{On Brezis' Open Problem 2.2}

\author[H.G. Chen]{Hong-Ge Chen}
\address{Hong-Ge Chen, School of Mathematics and Statistics, Key Laboratory of Nonlinear Analysis \& Applications (Ministry of Education),
Central China Normal University, Wuhan 430079, China}
\email{hongge\_chen@whu.edu.cn}

\author[Y. Liu]{Yong Liu}
\address{Yong Liu, School of Mathematics and Statistics, Beijing Technology and Business University, Beijing, China}
\email{yliumath@btbu.edu.cn}

\author[J.C. Wei]{Juncheng Wei}
\address{Juncheng Wei, Department of Mathematics, Chinese University of Hong Kong, Shatin, New Territories, Hong Kong}
\email{wei@math.cuhk.edu.hk}

\author[W. Yang]{Wen Yang}
\address{Wen Yang, Department of Mathematics, Faculty of Science, University of Macau, Taipa, Macau, China.}
\email{wenyang@um.edu.mo}

\begin{abstract}
We prove that the global minimizer of the Ginzburg-Landau energy in the disk of radius $R$ with boundary value $ u(x)=\frac{x}{|x|}$ is the degree-one radial solution. This gives an affirmative answer to Open Problem~2.2 in Brezis' open-problem
list. The key idea is to compare the radial solution $f$ in the disk with the
degree-one radial solution $F$ in the whole plane.  Multiplying a
disk competitor by $F/f$ enables us to use the known minimality of the whole-plane
vortex without changing the boundary trace. The difference of the two
energies can be decomposed into Fourier modes.  Every nonzero mode is nonnegative, and the
zero mode is then handled by a Picone type identity.  
\end{abstract}

\maketitle

\section{Introduction}

The Ginzburg--Landau theory was introduced to describe superconductivity
\cite{deGennes,GinzburgLandau}.  In the full model, a complex-valued function
is coupled to a magnetic potential.  Abrikosov showed that this model also
describes quantized vortices in type-II superconductors \cite{Abrikosov}.  We
study the energy without the magnetic field.  Its Dirichlet term measures the spatial changes, while its potential term takes care of $|u|\neq1$.  The
books \cite{BBH,PacardRiviere} explain the problem without a magnetic field,
and \cite{SandierSerfaty} treats the magnetic model.

Vortices also occur in the problem without a magnetic field.  If the boundary
map has nonzero degree, every extension to the disk must vanish somewhere.
Around such a zero, the modulus is small and the phase makes a full turn.
Bethuel--Brezis--H\'elein and Struwe studied minimizers as
$\varepsilon\to0$.  They found the limiting vortex locations and the harmonic
map away from those points \cite{BBH,BBH93,Struwe}.  Sandier and Jerrard proved
lower bounds for the energy near the zeros \cite{Jerrard, SandierLower}.
Jerrard--Soner described the limiting vorticity by the distributional Jacobian
\cite{JerrardSoner}.  These results mainly concern the limit
$\varepsilon\to0$, where $\varepsilon$ is the scaling parameter associated to the problem.  The question considered here is for each fixed
$\varepsilon>0$.

The problem we are interested in is the following.  For $s>0$, let
$B_s(0):=\{x\in\R^2:|x|<s\}$, identify $\R^2$ with $\C$, and put
$g(x)=x$ on $\partial B_1(0)$.  We work in
\[
 H_g^1(B_1(0);\C)
 :=\bigl\{u\in H^1(B_1(0);\C):
 \Tr u=g\text{ in }H^{\frac{1}{2}}(\partial B_1(0);\C)\bigr\}.
\]
For $\varepsilon>0$, set
\[
 E_\varepsilon(u)
 :=\frac{1}{2}\int_{B_1(0)}|\nabla u|^2\dd x
 +\frac{1}{4\varepsilon^2}\int_{B_1(0)}(1-|u|^2)^2\dd x.
\]
Its critical points solve
\[
 \begin{cases}
 -\Delta u=\frac{1}{\varepsilon^2}u(1-|u|^2)&\text{in }B_1(0),\\[2mm]
 u(x)=x&\text{on }\partial B_1(0).
 \end{cases}
\]
The radial degree-one solution is
$U_\varepsilon(r,\theta)=f_\varepsilon(r)e^{i\theta}$, where
$f_\varepsilon$ is the positive solution of
\begin{equation*}
 \begin{cases}
 -f_\varepsilon''-\frac{1}{r}f_\varepsilon'
 +\frac{1}{r^2}f_\varepsilon
 =\frac{1}{\varepsilon^2}f_\varepsilon(1-f_\varepsilon^2)
 &\text{in }(0,1),\\[2mm]
 f_\varepsilon(0)=0,\qquad f_\varepsilon(1)=1.
 \end{cases}
\end{equation*}
Its existence, uniqueness, positivity, and monotonicity are
proved in \cite[Appendix~II]{BBH} and
\cite[Theorem~8.2]{PacardRiviere}.  Herv\'e--Herv\'e vary the initial slope
and show that one choice tends to one at infinity, while larger slopes reach
one at a finite radius \cite{HerveHerve}.  Chen--Elliott--Tang also use
shooting to prove uniqueness results for radial vortex solutions
\cite{ChenElliottTang}.  The precise ODE facts used below are collected in
Section~\ref{2-1}.

In principle, the radial boundary value does not force every vector-valued minimizer to be
radial.  Scalar rearrangement does not easily keep both the phase and the
boundary degree.  Lieb--Loss proved that the second variation at the radial
solution is nonnegative \cite{LiebLoss}.  Mironescu proved that it is strictly
positive and that the radial solution is a strict local minimizer for every
$\varepsilon>0$ \cite{MironescuStability}.  These results compare only maps
close to the radial solution and therefore do not rule out a distant map with
lower energy.

Some ranges of $\varepsilon$ were already understood.  Let $\lambda_1$ be the first
Dirichlet eigenvalue of $-\Delta$ in the disk.  If
$\varepsilon\geq\lambda_1^{-1/2}$, the energy is strictly convex, so the radial
solution is the unique critical point and minimizer
\cite[Section~2]{BrezisProblems}.  If $\varepsilon$ is sufficiently small,
Pacard--Rivi\`ere proved that the radial solution is the only critical point
\cite[Theorem~10.2]{PacardRiviere}.  Their proof uses estimates near the zeros
and weighted estimates for the linearized equation.  However, these two results do not
cover all intermediate values of $\varepsilon$, which is the most difficult case.

Higher-degree boundary values behave differently.  Comte--Mironescu used
bifurcation theory to construct nonradial branches when the boundary degree is
larger than one \cite{ComteMironescu}.  For degree one, Brezis stated the
all-parameter minimization question as Open Problem~6 in
\cite[Section~2]{BrezisSymmetry} and later as Open Problem~2.2 in
\cite{BrezisProblems}.  Open Problem~2.1 in the latter paper asks whether the
radial solution is the only solution of the equation.  Open Problem~2.2 asks
whether it minimizes the energy for every $\varepsilon>0$.  

Results on other domains help explain why the geometry matters.  On annuli,
Golovaty and Berlyand proved uniqueness of a radial minimizer on sufficiently
narrow annuli and showed that it can fail on wide annuli
\cite{GolovatyBerlyand}.  Berlyand and Mironescu related existence and vortex
formation on doubly connected domains to the capacity of the domain
\cite{BerlyandMironescu}.  Jimbo and Morita constructed stable rotational
solutions in a three-dimensional rotational domain and studied the linearized
equation \cite{JimboMorita}.  

There are also related results in higher dimensions.  The singular harmonic map
$x/|x|$ was studied in \cite{JagerKaul,BrezisCoronLieb,LinMap}.  These papers
show how its minimizing property depends on the dimension.  Millot--Pisante
described three-dimensional entire local minimizers under an energy-growth
bound, and Pisante proved local minimality of equivariant vortices in any
dimension \cite{MillotPisante,Pisante}.  For the unit ball, Ignat--Nguyen
proved local minimality of the radial vortex for every parameter when
$N\geq3$ \cite{IgnatNguyen}.  Ignat--Nguyen--Slastikov--Zarnescu proved global
minimality and uniqueness when $N\geq7$ \cite{INSZVortex}.  Their later paper
proves a more general uniqueness result under a one-sided condition on the
boundary value \cite{IgnatNguyenSlastikovZarnescuFull}.  Ignat--Rus proved
symmetry and global minimality for vortex sheets in cylinders \cite{IgnatRus}.
Ignat--Nahon--Nguyen proved minimality for gradient-field vortices in the unit
ball when $N\geq4$ \cite{IgnatNahonNguyen}.  None of these higher-dimensional
results applies to the two-dimensional disk considered here.

For our proof, the whole-plane degree-one vortex plays a key role.
Brezis--Merle--Rivi\`ere proved quantization results for entire solutions with
finite potential energy, and Shafrir studied their behavior at infinity
\cite{BrezisMerleRiviere,Shafrir}.  Sandier proved that every entire local
minimizer has finite potential energy \cite{SandierLocal}.  Together with
Mironescu's theorem, this shows that every nonconstant entire local minimizer
is the standard degree-one vortex, up to translation, rotation in the target,
and complex conjugation \cite{MironescuLocalMin}. 

A recent paper by del Pino--Liu--Musso--Wei--Yang studies the degree-two and
degree-three whole-plane vortices.  It proves their nondegeneracy, computes
their Morse indices, and gives explicit algebraic upper and lower bounds for
the corresponding radial functions
\cite[Proposition~2.3]{DPLMWY}.  It also uses Bernstein coefficients to check
polynomial signs; see \cite[Lemma~3.3]{DPLMWY}.  These two calculations are
close to parts of Section~\ref{4-1}.  That paper treats degrees two and three,
whereas the degree-one inequalities for $F$ used here are proved directly
from the degree-one ODE.

We can now state the main result.  It compares the radial solution with every
admissible map, not only with radial maps or maps close to it, and it assumes no
upper bound on the energy of the competitor. 

\begin{theorem}
\label{1-1}
For every $\varepsilon>0$ and every $u\in H_g^1(B_1(0);\C)$,
\[
 E_\varepsilon(u)\geq E_\varepsilon(U_\varepsilon).
\]
Moreover,
\[
 E_\varepsilon(u)=E_\varepsilon(U_\varepsilon)
 \quad\Longleftrightarrow\quad
 u=U_\varepsilon\quad
 %\text{almost everywhere} \
 \text{in}~B_1(0).
\]
\end{theorem}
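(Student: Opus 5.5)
By scaling $x\mapsto x/\varepsilon$ we may work on the disk $B_R(0)$ with $R=1/\varepsilon$ and $\varepsilon=1$. There $f$ solves the degree-one ODE on $(0,R)$ with $f(R)=1$. Let $F$ be the whole-plane degree-one profile: $F(0)=0$, $F\to1$ at infinity, $0<F<1$.

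The plan follows the abstract. Given a competitor $u\in H^1_g$, write $u=fw$. This is legitimate because $f>0$ on $(0,R]$ and $f(r)\sim cr$ near $0$; we first reduce to $u$ smooth, for which $w$ has at worst a $1/r$ singularity that $f$ absorbs. Set $v:=Fw=(F/f)u$ on $B_R$. Then $v$ has trace $F(R)e^{i\theta}$ on $\partial B_R$. We extend $v$ outside $B_R$ by $F(r)e^{i\theta}$, which agrees with the standard vortex $V=F e^{i\theta}$ there. By the minimality of the entire vortex under compactly supported perturbations (Mironescu/Sandier), we get $E_{\R^2}(v)\ge E_{\R^2}(V)$, hence
\[
E_{B_R}(Fw)\ \ge\ E_{B_R}(Fe^{i\theta}).
\]
It then suffices to prove a comparison inequality
\[
\Phi(w):=\bigl[E_{B_R}(fw)-E_{B_R}(fe^{i\theta})\bigr]-\bigl[E_{B_R}(Fw)-E_{B_R}(Fe^{i\theta})\bigr]\ \ge 0
\]
for all admissible $w$ with $w=e^{i\theta}$ on $\partial B_R$.

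\textbf{Computing $\Phi$.} For a radial profile $\rho$ solving its ODE, integrating by parts with $w=e^{i\theta}$ on the boundary gives
\[
E(\rho w)-E(\rho e^{i\theta})=\tfrac12\!\int\!\rho^2\bigl(|\nabla w|^2-\tfrac{1}{r^2}|w|^2\bigr)+\tfrac14\!\int\!\rho^4(1-|w|^2)^2 + \text{boundary terms}.
\]
This uses $\int|\nabla(\rho w)|^2=\int\rho^2|\nabla w|^2-\int \rho\Delta\rho|w|^2+\oint\rho\rho'|w|^2$ together with $\rho\Delta\rho=\rho^2/r^2-\rho^2(1-\rho^2)$. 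The boundary term equals $\tfrac12\oint\rho\rho'$ in both cases, and these must be tracked. Subtracting gives
\[
\Phi(w)=\tfrac12\!\int(f^2-F^2)\bigl(|\nabla w|^2-\tfrac{|w|^2}{r^2}\bigr)+\tfrac14\!\int(f^4-F^4)(1-|w|^2)^2+\text{const}.
\]
The constant vanishes at $w=e^{i\theta}$, since $\Phi(e^{i\theta})=0$.

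\textbf{Sign and Fourier decomposition.} We need the ODE comparison $f\ge F$ on $(0,R)$. This holds because $f(R)=1>F(R)$, and it follows from a standard sub/supersolution or shooting argument (larger initial slope), presumably among the facts in Section~2. So the weights $a:=f^2-F^2\ge0$ and $b:=f^4-F^4\ge0$. The quartic term is then nonnegative and can be dropped. Write $w=\sum_n w_n(r)e^{in\theta}$. The quadratic part becomes
\[
\sum_n\int a\bigl(|w_n'|^2+\tfrac{n^2-1}{r^2}|w_n|^2\bigr)r\,dr,
\]
and every mode with $|n|\ge1$ is nonnegative. The dangerous piece is the zero mode, $\int a(|w_0'|^2-|w_0|^2/r^2)\,r\,dr$, where $w_0$ vanishes at $R$. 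Here I would use a Picone identity with a positive function $\phi$ satisfying
\[
-(ra\phi')'-\tfrac{a}{r}\phi\ \ge\ -\,(\text{help from the quartic term}).
\]
If the quadratic form alone is not positive, the quartic term must be kept, using $(1-|w|^2)^2\ge0$ mixed with the zero mode. A natural candidate is $\phi=\partial_r$-type quantities built from $f$ and $F$, for example $\phi=r$ or $\phi=F/r$. Showing the resulting inequality requires explicit bounds on $F$ (algebraic upper and lower bounds and polynomial sign checks, as in Section~4). This zero-mode step is the main obstacle.

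\textbf{Equality.} Equality forces equality in the whole-plane minimality, so $Fw=V$ up to the symmetries fixed by the boundary. Hence $w=e^{i\theta}$ and $u=U_\varepsilon$. A density argument removes the smoothness assumption on $u$.
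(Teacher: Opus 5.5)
Your setup is the paper's: rescale, multiply by $F/f$, invoke whole-plane minimality, subtract the two radial identities, and split into angular modes, where every $m\neq0$ mode is nonnegative because $f>F$. But there is a genuine gap: the proposal stops at the step that is the actual content of the proof. Nonnegativity of the zero mode is left as ``the main obstacle'', and the route you sketch for it fails. With $a=f^2-F^2=(\beta^2-\alpha^2)r^2+O(r^4)$, your zero-mode form $\int_0^R ra(|w_0'|^2-|w_0|^2/r^2)\dd r$ is, to leading order at the origin, $\int r^3|w'|^2-\int r|w|^2$. That is a weighted Hardy inequality at its sharp constant $1$, so its sign is decided by lower-order terms of $f$ and $F$, and a generic multiplier cannot work. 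Concretely, for $\phi\approx c\,r^s$ ($c>0$) one gets $-(ra\phi')'-\frac ar\phi\approx-c(\beta^2-\alpha^2)(s+1)^2r^{s+1}<0$ unless $s=-1$. So both of your candidates, $\phi=r$ and $\phi=F/r$, make the Picone potential negative near $r=0$. The paper substitutes $b=rw_0$ and $h=a/r^2$, so the zero-mode form becomes $Q_0[b]=\pi\int_0^R(rh|b'|^2+h'|b|^2)\dd r$. It then takes $\phi=F^2/(r\sqrt{f^2-F^2})$, which is a positive constant at $0$ in the $b$-variable, i.e.\ $\sim r^{-1}$ in yours. The Picone remainder is then $hS/r$ with $S=r^2(1-F^2+f^2)-4y-2y^2-\eta^2$, where $y=1-rF'/F$, $k=f/F$, $\eta=rk'/(k^2-1)$. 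Proving $S>0$ is the heart of the paper. It uses a first-zero argument on a closed first-order system, the bounds $\alpha>\tfrac12$, $y\le F^2$ and $\eta<r/\sqrt2$, and an exact Bernstein-coefficient check of a polynomial. None of this appears in your proposal, and the quartic term is not used for this mode at all.

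Separately, your claim that the boundary ``constant'' vanishes because $\Phi(e^{i\theta})=0$ is wrong. For smooth $u$ with $u(0)\neq0$, which you explicitly allow, the integration by parts is valid only on annuli $\{\rho<|x|<R\}$. The inner term $-\frac12\int_0^{2\pi}\rho\,(ff'-FF')(|w|^2-1)\dd\theta$ then tends to $-\pi(1-\alpha^2/\beta^2)|u(0)|^2$, which depends on $u$ and is negative. Equivalently, your zero-mode integral equals $Q_0[b]+\pi(\beta^2-\alpha^2)|b(0)|^2$, so proving it nonnegative would not give $\Phi\ge0$. This is repairable: either work with $b$, where the two boundary terms cancel as in the paper, or approximate by maps equal to $fe^{i\theta}$ near $0$, which is possible since points have zero capacity in two dimensions. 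As written, though, the decomposition is false. Your equality argument, by contrast, is a valid alternative to the paper's Fourier-coefficient one, once $D_R\ge0$ is known on all of $\Acal_R$. Equality then forces $\EGL(\sigma u;B_R(0))=\EGL(V;B_R(0))$, so $\sigma u$ extended by $V$ is an entire minimizer under compact perturbations. It is therefore an analytic solution that coincides with $V$ outside $B_R(0)$, hence everywhere.
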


This theorem resolves Brezis' open problem 2.2. We next outline the main steps of the proof.  First, it turns out to be more convenient to scale the unit disk to $B_R(0)$,
where $R=\varepsilon^{-1}$.  Let $f=f_R$ be the radial solution with
$f(R)=1$, and let $F$ be the radial solution in the whole plane that tends to
one at infinity.  Thus
\begin{align}
 f''+\frac{1}{r}f'-\frac{1}{r^2}f+(1-f^2)f&=0,
 &f(0)=0,\quad f(R)=1,\label{1-4}\\
 F''+\frac{1}{r}F'-\frac{1}{r^2}F+(1-F^2)F&=0,
 &F(0)=0,\quad\lim_{r\to\infty}F(r)=1.\label{1-5}
\end{align}
An integral identity for $f/F$ and a first-zero argument give
\[
 f(r)>F(r)\qquad(0<r\leq R).
\]

We then multiply a disk competitor $u$ by $F/f$.  Although $f$ and $F$ both
vanish at the origin, $F/f$ is smooth there.  Since $f(R)=1$, the new map has
the same boundary value as $Fe^{i\theta}$.  The minimality of the degree-one
vortex in the whole plane then gives
\[
 \EGL\left(\frac{F}{f}u;B_R(0)\right)
\geq \EGL(Fe^{i\theta};B_R(0)).
\]
Because $f$ and $F$ are different, a remainder still has to be controlled.

In the next step, we work first on a punctured disk, write $z=u/f$, and expand $z$ in
angular Fourier modes.  Subtracting the two radial ODE identities gives
\[
 \mathcal E_R(u)-\mathcal E_R(fe^{i\theta})
 =\left\{\EGL\left(\frac{F}{f}u;B_R(0)\right)
 -\EGL(Fe^{i\theta};B_R(0))\right\}+D_R[u].
\]
The term $D_R[u]$ is a nonnegative quartic integral plus one quadratic form for
each Fourier mode.  Every nonzero mode is nonnegative, so only the circular
average, namely the zero mode, needs a separate argument.

Finally, let $c_0$ be the circular average of $u$.  For a smooth competitor
with $u(0)\ne0$, the quotient $a_0=c_0/f$ behaves like $1/r$.  The change
$b=ra_0$ removes this singular behavior.  We apply the Picone identity
\cite{Picone} to the quadratic form for $b$, with a positive function made
from $f$ and $F$.  The remaining question is whether a scalar function
$S(r)$ is positive.  A first-zero argument reduces this to the sign of one
explicit polynomial.  We check that sign in the tensor Bernstein basis
\cite{Farouki}; all relevant coefficients are checked exactly.

There are two issues near the origin which should be dealt very carefully.  The identity containing $u/f$ is
first proved on an annulus.  As the inner radius tends to zero, two boundary
terms may have nonzero limits separately, but their sum tends to zero.
Moreover, a general $H^1$ map has no value at a single point.  We therefore
use a weighted one-dimensional space, prove density with a logarithmic cutoff,
and then pass to all Sobolev competitors.  This also gives the equality case. 

\section{Whole-plane comparison and the Fourier energy decomposition}
\label{2-1}
In this section, we analyze the exact comparison between an arbitrary disk map and
the radial solution.  After rescaling the disk, we compare its radial function
$f$ with the whole-plane function $F$ and show that $f/F$ is larger than one
and increasing.  The regular multiplier $F/f$ then gives a map with the same
boundary value as the whole-plane vortex, so its minimizing property can be
used.  For smooth competitors, subtracting the two radial energy identities
and expanding in angular Fourier modes gives a nonnegative quartic term and
nonnegative forms for every nonzero mode. The zero mode needs a further
argument.  We also check carefully that the two inner boundary terms created
by division by $f$ cancel at the origin.

Fix $\varepsilon>0$, set $R=\varepsilon^{-1}$, and for
$u\in H_g^1(B_1(0);\C)$ put $v(y)=u(\varepsilon y)$.  The change of
variables $x=\varepsilon y$ and the corresponding boundary dilation give
\[
 E_\varepsilon(u)
 =\mathcal E_R(v)
 :=\frac{1}{2}\int_{B_R(0)}|\nabla v|^2\dd y
 +\frac{1}{4}\int_{B_R(0)}(1-|v|^2)^2\dd y.
\]
This identifies $H_g^1(B_1(0);\C)$ with\[
 \Acal_R:=\bigl\{v\in H^1(B_R(0);\C):
 \Tr v=e^{i\theta}\text{ in }
 H^{\frac{1}{2}}(\partial B_R(0);\C)\bigr\}.\]
The same change of variables in the radial equation gives
$f_\varepsilon(s)=f_R\left(\frac{s}{\varepsilon}\right)$.

The functions $f$ and $F$ are the solutions of \eqref{1-4} and \eqref{1-5}.
For every $R>0$, \eqref{1-4} has a unique positive solution
$f=f_R\in C^\infty([0,R])$, while \eqref{1-5} has a unique positive
solution $F\in C^\infty([0,\infty))$.  On their respective open intervals,
$0<f,F<1$ and $f',F'>0$.  Both functions are analytic at $r=0$, with positive
initial slopes, and
\[
 F(r)=1-\frac{1}{2r^2}-\frac{9}{8r^4}+O(r^{-6}),\qquad
 F'(r)=\frac{1}{r^3}+\frac{9}{2r^5}+O(r^{-7})\quad(r\to\infty).
\]
The shooting theorem and its  remarks in
\cite[p.~428]{HerveHerve} state more precisely that there is a unique threshold
slope $\alpha$: its solution is $F$, whereas every slope larger than
$\alpha$ gives an increasing branch that reaches the level $1$ at a finite
first radius.  Supplementary Remark~2 states that, for each $R>0$, the
boundary problem has a unique solution and that it is the branch with some
slope $\beta>\alpha$. This also gives
the asserted uniqueness and monotonicity; see
\cite[Theorem~8.2]{PacardRiviere} as well.  The asymptotics can also be found in 
\cite[Lemma~A.1]{GravejatPacherieSmets}.

We will set $\alpha=F'(0)$ and $\beta=f'(0)$.  For $p\in\{F,f\}$ and
$a=p'(0)$, the degree-one analyticity result
\cite[Th\'eor\`eme, part~1, and (2), p.~428]{HerveHerve} gives
$p(r)=rA_p(r^2)$ with $A_p$ analytic, and substitution of
$p=ar+cr^3+er^5+O(r^7)$ into the radial equation tells us that
\begin{equation}\label{2-3}
 p(r)=ar-\frac{a}{8}r^3
 +\left(\frac{a^3}{24}+\frac{a}{192}\right)r^5
 +O(r^7).
\end{equation}

At this stage, we would like to introduce two important functions which will be used later:
\[
 k:=\frac{f}{F},
 \qquad
 y:=1-\frac{rF'}{F}.
\]

The two functions $f,F$ start with different slopes at the origin.  The disk
solution starts above the whole-plane solution.
Equivalently, their quotient $k=f/F$ stays above one and moreover it is increasing.  This is the content of the following 
\begin{proposition}
\label{2-4}
For every $0<r\leq R$, we have 
\[
 \beta>\alpha,
 \qquad k(r)>1,
 \qquad k'(r)>0.
\]
\end{proposition}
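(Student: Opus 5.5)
We will work with the quotient $k=f/F$ and the Wronskian-type quantity $W:=r(f'F-fF')=rF^2k'$. The plan is to derive an integral identity for $W$, read off the sign of $k'$ from it, and then close the argument with a first-zero comparison.

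First subtract $F\times$\eqref{1-4} from $f\times$\eqref{1-5}. The terms $\frac{1}{r^2}fF$ cancel, and we are left with
\[
 (rf'F-rfF')'=r\,fF\,(f^2-F^2).
\]
Since both $f$ and $F$ are $O(r)$ near $0$ by \eqref{2-3}, we have $W(r)=O(r^2)$ as $r\to0$. Integration therefore gives
\[
 rF^2k'(r)=W(r)=\int_0^r sfF(f^2-F^2)\dd s=\int_0^r sF^4 k(k^2-1)\dd s .
\]
Near the origin, \eqref{2-3} gives $k(r)\to\beta/\alpha$ as $r\to0$ (in fact $k$ is smooth and even in $r$).

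Next we establish $\beta\ge\alpha$ by contradiction. Suppose $\beta<\alpha$. Then $k<1$ near $0$, and the identity above gives $k'<0$ there. Let $r_1$ be the first point of $(0,R]$ where $k=1$, if such a point exists. On $(0,r_1)$ the integrand is negative, so $k'<0$ on that interval, which forces $k(r_1)<k(0^+)<1$, a contradiction. Hence $k<1$ and $k'<0$ on all of $(0,R]$, so $k(R)<1$. But $k(R)=f(R)/F(R)=1/F(R)>1$ because $F<1$. This is again a contradiction.

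Now suppose $\beta=\alpha$. Then $f$ and $F$ solve the same regular-singular ODE with the same analytic data $p(r)=rA_p(r^2)$, and since $A_p$ is determined by $a$ through the recursion, $f=F$ near $0$. Uniqueness for the ODE away from $0$ then gives $f\equiv F$ on $[0,R]$. This contradicts $f(R)=1>F(R)$. Alternatively, we may cite directly the Herv\'e--Herv\'e statement that $\beta>\alpha$. Either way, $\beta>\alpha$.

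With $\beta>\alpha$ in hand, we have $k(0^+)=\beta/\alpha>1$. We run the same first-zero argument in the other direction. Let $r_1$ be the first point where $k=1$. On $(0,r_1)$ the integrand $sF^4k(k^2-1)$ is positive, so $W>0$ and hence $k'>0$ there. Then $k(r_1)>k(0^+)>1$, which is a contradiction. So $k>1$ on $(0,R]$. Consequently the integrand is positive throughout, $W(r)>0$ for every $r\in(0,R]$, and $k'(r)=W(r)/(rF^2)>0$.

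The main point needing care is the behavior at $r=0$. One has to justify that the boundary term $W(r)\to0$, which follows from \eqref{2-3}, and that $k$ extends continuously with $k(0)=\beta/\alpha$. One also has to handle the degenerate case $\beta=\alpha$ rigorously. That case uses uniqueness of the analytic solution with a prescribed slope from \cite{HerveHerve}.
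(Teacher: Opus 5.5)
Your proof is correct. Its core is exactly the paper's argument: the Wronskian identity $(rF^2k')'=rF^4k(k^2-1)$, the vanishing of $rF^2k'$ at the origin, and a first-contact argument giving first $k>1$ and then $k'>0$.

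Where you genuinely differ is in how you obtain $\beta>\alpha$. The paper imports it from the Herv\'e--Herv\'e shooting classification: the disk solution corresponds to a slope strictly above the threshold slope $\alpha$. You instead derive it from the same identity, in two steps.
\begin{itemize}
\item Running the first-contact argument with the inequalities reversed rules out $\beta<\alpha$. In that case $k(0^+)<1$ would force $k$ to decrease below $1$ up to its first contact with $1$, which contradicts $k(R)=1/F(R)>1$.
\item The case $\beta=\alpha$ is excluded because the recursion $4n(n+1)c_n=-[r^{2n-1}]\bigl((1-p^2)p\bigr)$ involves only $c_0,\dots,c_{n-1}$. So the solution of the form $rA(r^2)$ is determined by its slope, which forces $f\equiv F$ and contradicts $f(R)=1>F(R)$.
\end{itemize}

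What your version buys is self-containedness. It needs only $0<F<1$ and the analytic form $p=rA_p(r^2)$, both of which the paper uses anyway. It also shows that the ordering of the initial slopes is forced by the boundary ordering $f(R)>F(R)$. The paper's citation is shorter, but it leans on the finer global shooting picture: uniqueness of the threshold slope, and where the disk solution sits relative to it.
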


\begin{proof}
The preceding shooting classification gives $\beta>\alpha$.  Hence $k$
extends continuously to the origin with $k(0)=\frac{\beta}{\alpha}>1$.

Inserting $f=kF$ into the equation for $f$, followed by subtraction
of $k$ times the equation for $F$, tells us that
\[
 k''+\left(2\frac{F'}F+\frac1r\right)k'+F^2k(1-k^2)=0.
\]
Consequently,
\begin{equation}\label{2-5}
 (rF^2k')'=rF^4k(k^2-1).
\end{equation}
The expansion \eqref{2-3} also gives
\[
 k(r)=\frac\beta\alpha
 \left(1+\frac{\beta^2-\alpha^2}{24}r^4+O(r^6)\right),
\]
so $rF(r)^2k'(r)=O(r^6)$ at the origin.  Suppose that $k$ reaches $1$,
and let $r_*$ be its first contact with that level.  Since
$k(0)=\frac{\beta}{\alpha}>1$, one has $k>1$ on $(0,r_*)$.  Integrating
\eqref{2-5} from $0$ to $r<r_*$ yields
\[
 rF(r)^2k'(r)
 =\int_0^r sF(s)^4k(s)(k(s)^2-1)\dd s>0.
\]
Thus $k$ is strictly increasing on $(0,r_*)$, which is incompatible with
$k(r_*)=1<k(0)$.  Hence $k>1$ on $(0,R]$. This also implies that $k'>0$ throughout $(0,R]$. The proof is then completed.
\end{proof}

We shall also use the following logarithmic-slope consequences.  For
$0<r\leq R$,
\[
 y-\left(1-\frac{rf'}f\right)=\frac{rk'}k>0.
\]Hence 
\[ 0<1-\frac{rf'}f<y<1.\]
The first inequality can be proved as follows. For $p\in\{f,F\}$, the radial equation and the expansion at the origin give
\[
 r^2\left(p'(r)-\frac{p(r)}{r}\right)
 =-\int_0^r s^2(1-p(s)^2)p(s)\dd s<0,
\]
so $1-\frac{rp'}{p}>0$. 

The comparison map below uses both $f/F$ and its reciprocal.  At first these
quotients look undefined at the origin because both functions vanish there. However, their common linear factor removes this problem.  The next lemma gives the
regularity and the local expansion that we will use later.
\begin{lemma}
\label{2-6}
The functions $k=\frac{f}{F}$ and $\sigma=\frac{F}{f}$ extend smoothly to $r=0$, and
\begin{equation}
 k(r)
 =\frac{\beta}{\alpha}
 \left(1+\frac{\beta^2-\alpha^2}{24}r^4+O(r^6)\right),
 \label{2-7}
\end{equation}
In particular,
\[
 \sigma(|\cdot|)\in W^{1,\infty}(B_R(0);\R),
 \qquad
 0<\sigma\leq1.
\]
\end{lemma}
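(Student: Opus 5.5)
The plan is to use the odd-analytic structure recorded after \eqref{2-3}. By the Herv\'e--Herv\'e analyticity result, $f(r)=rA_f(r^2)$ and $F(r)=rA_F(r^2)$, where $A_f,A_F$ are analytic near $0$ with $A_f(0)=\beta>0$ and $A_F(0)=\alpha>0$. Then
\[
 k(r)=\frac{A_f(r^2)}{A_F(r^2)}, \qquad \sigma(r)=\frac{A_F(r^2)}{A_f(r^2)}
\]
near the origin. Both quotients are analytic functions of $r^2$ there, because the denominators do not vanish at $0$. Hence $k(|x|)$ and $\sigma(|x|)$ are smooth, indeed real-analytic, functions of $x$ in a neighbourhood of the origin in $\R^2$, since $r^2=|x|^2$ is smooth. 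Away from the origin, $f,F\in C^\infty$ and $f,F>0$ on $(0,R]$, so both quotients are smooth on $(0,R]$. Together these give the smooth extension to $[0,R]$, and also to $\overline{B_R(0)}$ as radial functions.

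For the expansion \eqref{2-7}, I divide the two series from \eqref{2-3}. Write $p=ar\bigl(1-\tfrac18 r^2+(\tfrac{a^2}{24}+\tfrac1{192})r^4+O(r^6)\bigr)$. The $r^2$ coefficient $-\tfrac18$ is independent of $a$, so it cancels in the quotient. The $r^4$ coefficients differ only through $\tfrac{a^2}{24}$, and the cross terms coming from $(-\tfrac18)^2$ are also identical and cancel. The result is
\[
 k=\frac{\beta}{\alpha}\Bigl(1+\frac{\beta^2-\alpha^2}{24}r^4+O(r^6)\Bigr),
\]
as claimed. The error is $O(r^6)$ because the expansions are in powers of $r^2$. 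This was already used in the proof of Proposition~\ref{2-4}.

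Finally, I check the $W^{1,\infty}$ claim and the bounds. By Proposition~\ref{2-4}, $k(r)>1$ on $(0,R]$, and $k(0)=\beta/\alpha>1$, so $0<\sigma=1/k<1$ on $[0,R]$. This in particular gives $0<\sigma\le1$. Smoothness on the compact set $\overline{B_R(0)}$ then bounds $\sigma$ and $\nabla\sigma(|x|)=\sigma'(|x|)\,x/|x|$, with $\sigma'(0)=0$ by the expansion, so $\sigma(|\cdot|)\in W^{1,\infty}(B_R(0);\R)$. I expect no real obstacle here. The only delicate point is justifying smoothness in $x$ at the origin, and this is exactly where the form $p=rA_p(r^2)$ with $A_p$ analytic is essential: a merely smooth odd function of $r$ would need a Whitney-type argument to be written as a smooth function of $r^2$, and the analyticity result supplies this directly.
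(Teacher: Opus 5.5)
Your proposal is correct and follows essentially the same route as the paper. You use the factorization $p=rA_p(r^2)$ with $A_p$ analytic to get smoothness in $|x|^2$, divide the series from \eqref{2-3} so that the common $-\tfrac18 r^2$ term cancels, and then invoke Proposition~\ref{2-4} together with compactness to obtain $0<\sigma\le1$ and the $W^{1,\infty}$ bound.
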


\begin{proof}
By the analytic formula preceding \eqref{2-3},
$f(r)=rA_f(r^2)$ and $F(r)=rA_F(r^2)$, where
$A_f(0)=\beta>0$ and $A_F(0)=\alpha>0$.  Hence
$k=\frac{A_f}{A_F}$ and $\sigma=\frac{A_F}{A_f}$ are analytic functions of $r^2$ near
the origin.  More explicitly, cancellation of the common quadratic term
in $\frac{A_f}{A_F}$ gives
\begin{align*}
 \frac{f(r)}{F(r)}
 &=\frac\beta\alpha
 \frac{1-\frac{r^2}{8}+\left(\frac{\beta^2}{24}+\frac{1}{192}\right)r^4+O(r^6)}
 {1-\frac{r^2}{8}+\left(\frac{\alpha^2}{24}+\frac{1}{192}\right)r^4+O(r^6)}\\
 &=\frac\beta\alpha
 \left(1+\frac{\beta^2-\alpha^2}{24}r^4+O(r^6)\right),
\end{align*}
which is \eqref{2-7}.  Taking the reciprocal, we obtain
\[
 \sigma(r)=\frac\alpha\beta
 \left(1-\frac{\beta^2-\alpha^2}{24}r^4+O(r^6)\right).
\]
Because $A_F(0)=\alpha>0$, the quotient $\frac{A_F}{A_f}$ is analytic in $r^2$
near the origin.  Its convergent local series is therefore a series in
$|x|^2=x_1^2+x_2^2$, so $x\mapsto\sigma(|x|)$ is smooth there.  Away from
the origin it is a smooth radial function, and compactness of
$\overline{B_R(0)}$ gives the $W^{1,\infty}$ bound.  Finally,
Proposition~\ref{2-4} gives $0<\sigma<1$ on $(0,R]$.
\end{proof}

\medskip 
We now use the ordering $f>F$ to compare an arbitrary disk map with the
standard vortex in the plane.  The comparison leaves a remainder, which will
be written exactly as a sum of angular Fourier terms.  Keeping this calculation
in the same section makes the role of $f$ and $F$ easier to follow.

For an open set $\mathcal O\subset\R^2$ and a map
$q\in H^1(\mathcal O;\C)$, write
\begin{equation*}
 \EGL(q;\mathcal O)
 :=\frac{1}{2}\int_{\mathcal O}|\nabla q|^2\dd x
 +\frac{1}{4}\int_{\mathcal O}(1-|q|^2)^2\dd x.
\end{equation*}
Let
\begin{equation*}
 V(x):=
 \begin{cases}
 F(|x|)\frac{x_1+ix_2}{|x|}=F(r)e^{i\theta},
 &x\in\R^2\setminus\{0\},\\[2mm]
 0,&x=0.
 \end{cases}
\end{equation*}

Since the degree one vortex on the plane is energy minimizing, we have
\begin{lemma}
\label{3-2}
For every $\rho>0$ and every $w\in H_0^1(B_\rho(0);\C)$,
\begin{equation*}
 \EGL(V+w;B_\rho(0))\geq\EGL(V;B_\rho(0)).
\end{equation*}
\end{lemma}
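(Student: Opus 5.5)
The plan is to obtain Lemma~\ref{3-2} from the known classification of entire local minimizers, \cite{SandierLocal,MironescuLocalMin}. That result says the degree-one vortex $V$ is a local minimizer in the sense of De Giorgi: for every bounded domain $\Omega$ and every perturbation with compact support in $\Omega$, the Ginzburg--Landau energy on $\Omega$ does not decrease. Mironescu's theorem \cite{MironescuLocalMin} gives local minimality of $V$ under compactly supported perturbations; Sandier \cite{SandierLocal} supplies the finite potential energy needed to identify entire local minimizers with $V$. So the core statement is
\[
 \EGL(V+\varphi;B_\rho(0))\geq\EGL(V;B_\rho(0))
 \quad\text{for every }\varphi\in C_c^\infty(B_\rho(0);\C),
\]
and the remaining work is to pass from smooth compactly supported $\varphi$ to arbitrary $w\in H_0^1(B_\rho(0);\C)$.

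The first step is to note that $V\in H^1(B_\rho(0);\C)\cap L^\infty$. Indeed $|V|\le1$, and $|\nabla V|^2=F'^2+F^2/r^2$ is bounded near $0$ because $F(r)=\alpha r+O(r^3)$. Hence $\EGL(V;B_\rho(0))<\infty$.

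The second step is to take $w\in H_0^1(B_\rho(0);\C)$ and choose $\varphi_n\in C_c^\infty(B_\rho(0);\C)$ with $\varphi_n\to w$ in $H^1$. The Dirichlet part $\frac12\int|\nabla(V+\varphi_n)|^2$ then converges to $\frac12\int|\nabla(V+w)|^2$. For the potential term, the two-dimensional Sobolev embedding $H^1(B_\rho(0))\hookrightarrow L^4(B_\rho(0))$ gives $V+\varphi_n\to V+w$ in $L^4$. Consequently $|V+\varphi_n|^2\to|V+w|^2$ in $L^2$, and so $(1-|V+\varphi_n|^2)^2\to(1-|V+w|^2)^2$ in $L^1$. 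Thus $\EGL(V+\varphi_n;B_\rho(0))\to\EGL(V+w;B_\rho(0))$.

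The third step passes to the limit in the core inequality for each $\varphi_n$, which gives the claim.

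The main obstacle is to make sure the cited minimality is really in this form, meaning for perturbations supported in an arbitrary ball with the energy restricted to that ball. If it is instead stated as $\EGL(V+\varphi;K)\ge\EGL(V;K)$ for $K\supset\operatorname{supp}\varphi$ with $K$ arbitrary, this is equivalent. The energies of $V+\varphi$ and $V$ agree outside $\operatorname{supp}\varphi$, so we may take $K=B_\rho(0)$.

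Should only the smooth class be covered by the citation, the density argument above is exactly what is needed. No restriction on the size of $w$ arises, because the local minimality in \cite{MironescuLocalMin} is global with respect to compactly supported perturbations, not merely infinitesimal.
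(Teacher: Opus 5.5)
Your proposal is correct and matches the paper. The paper also obtains Lemma~\ref{3-2} simply by invoking the known minimality of the whole-plane degree-one vortex (Sandier together with Mironescu), and your density passage from $C_c^\infty$ to $H_0^1$ perturbations via $H^1\hookrightarrow L^4$ is a correct detail that the paper leaves implicit.
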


To use the preceding result, a disk competitor must first be put into the
whole-plane boundary class.  Multiplication by $F/f$ does exactly this because
$f(R)=1$.  Lemma~\ref{2-6} shows that the multiplier is also regular at the
origin.  We therefore obtain the whole-plane energy inequality inside $B_R$.
\begin{lemma}
\label{3-3}
If $u\in\Acal_R$ and $\sigma=\frac{F}{f}$, then
\[
 \sigma u-V\in H_0^1(B_R(0);\C),
 \qquad
 \EGL(\sigma u;B_R(0))\geq\EGL(V;B_R(0)).
\]
\end{lemma}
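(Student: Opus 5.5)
The plan is to verify the two claims of Lemma~\ref{3-3} in order. The inequality is then an immediate application of Lemma~\ref{3-2} with $\rho=R$ and $w=\sigma u-V$.

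\emph{Step 1: $\sigma u\in H^1(B_R(0);\C)$.} By Lemma~\ref{2-6}, $\sigma(|\cdot|)\in W^{1,\infty}(B_R(0);\R)$ with $0<\sigma\le1$. A product of an $H^1$ function with a $W^{1,\infty}$ function lies in $H^1$, and $\nabla(\sigma u)=\sigma\nabla u+u\nabla\sigma$. This follows by approximating $u$ with smooth functions in $H^1$ and passing to the limit in $L^2$.

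\emph{Step 2: $V\in H^1(B_R(0);\C)$.} Away from the origin,
\[
|\nabla V|^2=F'(r)^2+\frac{F(r)^2}{r^2}.
\]
This quantity is bounded on $B_R(0)\setminus\{0\}$ because $F(r)=\alpha r+O(r^3)$ by \eqref{2-3}. Hence $V$ is Lipschitz on $\overline{B_R(0)}$: it is continuous with $V(0)=0$, and a function that is $W^{1,\infty}$ off a point, continuous, and planar has the pointwise gradient as its weak gradient, since a point has zero capacity. So $V\in W^{1,\infty}\subset H^1$.

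\emph{Step 3: traces.} The trace operator commutes with multiplication by functions that are Lipschitz up to the boundary. Indeed, for smooth $u_n\to u$ in $H^1$ we have $\Tr(\sigma u_n)=\sigma|_{\partial B_R}u_n|_{\partial B_R}$, and both sides converge in $H^{1/2}$, or at least in $L^2(\partial B_R)$. Since $f(R)=1$, we get $\sigma(R)=F(R)$, and therefore
\[
\Tr(\sigma u)=F(R)e^{i\theta}=\Tr V .
\]
Hence $\sigma u-V\in H^1$ has zero trace, which means $\sigma u-V\in H^1_0(B_R(0);\C)$ on the Lipschitz domain $B_R(0)$.

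Applying Lemma~\ref{3-2} with $w=\sigma u-V$ gives $\EGL(\sigma u;B_R(0))=\EGL(V+w;B_R(0))\ge\EGL(V;B_R(0))$. The only mildly delicate point is the regularity at the origin, both for $\sigma$ (already settled by Lemma~\ref{2-6}) and for $V$ (Step~2). Everything else is standard Sobolev calculus, so I expect no real obstacle. The substantive input is Lemma~\ref{3-2}, which is taken as known from the whole-plane minimality of the degree-one vortex.
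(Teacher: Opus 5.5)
Your proposal is correct and follows the paper's proof essentially step for step: $\sigma u\in H^1$ by the $W^{1,\infty}$ multiplier from Lemma~\ref{2-6}, $V\in H^1$, equal traces because $f(R)=1$, and then Lemma~\ref{3-2} with $w=\sigma u-V$. The only difference is how you show $V\in H^1$: the paper reads it off the smooth representation $V(x)=A_F(|x|^2)(x_1+ix_2)$, while you use a bounded gradient away from the origin together with removability of a point, and both arguments work.
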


\begin{proof}
Lemma~\ref{2-6} and the Sobolev multiplier theorem give
$ \sigma u\in H^1(B_R(0);\C)$. The formula at the origin also gives
\[
 V(x)=A_F(|x|^2)(x_1+ix_2)\in H^1(B_R(0);\C).
\]
Since $f(R)=1$ and $\Tr u=e^{i\theta}$, their traces satisfy
\[
 \Tr(\sigma u)=F(R)e^{i\theta}=\Tr V.
\]
Thus $\sigma u-V\in H_0^1(B_R(0);\C)$.  The inequality follows from
Lemma~\ref{3-2} with $w=\sigma u-V$.
\end{proof}

We next need an exact formula for the energy of a map written as $pz$.
Division by $p$ is safe on an annulus because $p$ is strictly positive there.
The formula separates radial derivatives, angular derivatives, the quartic
term, and one boundary term.  We will later apply it to both $p=f$ and $p=F$
and subtract the two identities.

\begin{lemma}
\label{3-4}
Let $0<\rho<R$, let
\begin{equation*}
 A_{\rho,R}:=\{x\in\R^2:\rho<|x|<R\},
\end{equation*}
and let $p\in C^2([\rho,R];(0,\infty))$ satisfy
\begin{equation}\label{3-5}
 p''+\frac{1}{r}p'-\frac{1}{r^2}p+(1-p^2)p=0
 \quad\text{for }\rho<r<R.
\end{equation}
For every $z\in C^1(\overline{A_{\rho,R}};\C)$,
\begin{equation}
\begin{aligned}\label{3-6}
 &\EGL(pz;A_{\rho,R})-\EGL(pe^{i\theta};A_{\rho,R})\\
 &=\frac{1}{2}\int_{A_{\rho,R}}p^2
 \left(
 |z_r|^2+\frac{|z_\theta|^2-|z|^2}{r^2}
 \right)\dd x\\
 &\quad+\frac{1}{4}\int_{A_{\rho,R}}p^4(|z|^2-1)^2\dd x+\frac{1}{2}\int_0^{2\pi}
 \left[r p(r)p'(r)(|z(r,\theta)|^2-1)\right]_{r=\rho}^{r=R}
 \dd\theta.
\end{aligned}    
\end{equation}

\end{lemma}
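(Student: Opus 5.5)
The proof is a direct computation in polar coordinates, followed by one integration by parts in $r$ that uses the ODE \eqref{3-5}. Write $x=(r\cos\theta,r\sin\theta)$, so $\dd x=r\dd r\dd\theta$ on $A_{\rho,R}$. For $q=pz$ with $z\in C^1$,
\[
 |\nabla q|^2=|q_r|^2+\frac{|q_\theta|^2}{r^2},
 \qquad
 q_r=p'z+pz_r,
 \qquad
 q_\theta=pz_\theta .
\]
Expanding gives
\[
 |\nabla(pz)|^2=p'^2|z|^2+2pp'\,\mathrm{Re}(\bar z z_r)+p^2|z_r|^2+\frac{p^2|z_\theta|^2}{r^2}.
\]
Here $2\mathrm{Re}(\bar z z_r)=\partial_r|z|^2$. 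For the reference map $pe^{i\theta}$ we have $|\nabla(pe^{i\theta})|^2=p'^2+p^2/r^2$.

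The key step is to handle the cross term $pp'\partial_r|z|^2$. Set $w:=|z|^2-1$ and integrate by parts in $r$ against the weight $r$:
\[
 \int_\rho^R rpp'\,\partial_r w\dd r=\bigl[rpp'w\bigr]_\rho^R-\int_\rho^R (rpp')'w\dd r .
\]
Expanding, $(rpp')'=rp'^2+p(rp')'$. By \eqref{3-5}, $(rp')'=rp''+p'=\frac{p}{r}-r(1-p^2)p$, so
\[
 (rpp')'=rp'^2+\frac{p^2}{r}-rp^2(1-p^2).
\]
Now collect terms in the difference of the Dirichlet parts, keeping the $w$-dependent pieces. The $p'^2(|z|^2-1)=p'^2w$ term is cancelled by the $-rp'^2w$ contribution. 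What remains is $p^2|z_r|^2+p^2|z_\theta|^2/r^2-p^2/r^2$ (the last from the reference energy), together with $-\frac{p^2}{r^2}w+p^2(1-p^2)w$ (dividing by $r$ to return to the measure $\dd x$). Combining $-\frac{p^2}{r^2}-\frac{p^2}{r^2}w=-\frac{p^2|z|^2}{r^2}$ gives exactly the angular integrand $\frac{|z_\theta|^2-|z|^2}{r^2}$ in \eqref{3-6}, after the factor $\tfrac12$.

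For the potential part, write
\[
 (1-p^2|z|^2)^2-(1-p^2)^2=\bigl(1-p^2-p^2w\bigr)^2-(1-p^2)^2=-2p^2(1-p^2)w+p^4w^2 .
\]
With the factor $\tfrac14$, this gives $-\tfrac12p^2(1-p^2)w+\tfrac14p^4w^2$. The first piece cancels the $+\tfrac12p^2(1-p^2)w$ coming from the integration by parts, and the second is the quartic term of \eqref{3-6}. The boundary term $\tfrac12\int_0^{2\pi}[rpp'(|z|^2-1)]_\rho^R\dd\theta$ is the one produced by the integration by parts, integrated in $\theta$ via Fubini.

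No step is a real obstacle; the only care needed is bookkeeping of the signs and of the factor $r$ when passing between $\dd r$ and $\dd x$. Regularity is harmless: on the compact annulus $p>0$ is $C^2$ and $z$ is $C^1$, so all integrands are continuous and the one-dimensional integration by parts is valid for each fixed $\theta$.
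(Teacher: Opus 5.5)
Your proposal is correct and follows essentially the paper's route: you write everything in terms of $w=|z|^2-1$, integrate the cross term $pp'\partial_r w$ by parts in $r$ with weight $r$, and use the radial ODE to cancel the $p'^2w$ and $p^2(1-p^2)w$ pieces. The leftover $-\frac{p^2}{r^2}w$ then combines with the angular term, exactly as in the paper. The only cosmetic difference is that the paper integrates the whole combination $\{p'^2s+pp's_r-p^2(1-p^2)s\}r$ by parts at once. There the ODE enters directly through $-rp\,(p''+\frac1r p'+(1-p^2)p)s=-\frac{p^2}{r}s$, whereas you expand $(rpp')'$ first and then cancel term by term.
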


\begin{proof}
Set $s=|z|^2-1$.  The radial and potential differences are
\begin{align*}
 |\partial_r(pz)|^2-p'^2&=p^2|z_r|^2+p'^2s+pp's_r,\\
 (1-p^2|z|^2)^2-(1-p^2)^2&=p^4s^2-2p^2(1-p^2)s.
\end{align*}
Together with
\[
 r^{-2}\bigl(|\partial_\theta(pz)|^2-|\partial_\theta(pe^{i\theta})|^2\bigr)
 =\frac{p^2}{r^2}(|z_\theta|^2-1),
\]
we obtain
\begin{align*}
 &\EGL(pz;A_{\rho,R})-\EGL(pe^{i\theta};A_{\rho,R})\\
 ={}&\frac{1}{2}\int_{A_{\rho,R}}p^2
 \left(|z_r|^2+\frac{|z_\theta|^2-1}{r^2}\right)\dd x
 +\frac{1}{4}\int_{A_{\rho,R}}p^4s^2\dd x\\
 &\quad+\frac{1}{2}\int_0^{2\pi}\!\int_\rho^R
 \{p'^2s+pp's_r-p^2(1-p^2)s\}r\dd r\dd\theta.
\end{align*}
For fixed $\theta$, integration by parts and
\eqref{3-5} yield
\begin{align*}
 &\int_\rho^R\{p'^2s+pp's_r-p^2(1-p^2)s\}r\dd r\\
 &=\left[rpp's\right]_{\rho}^{R}
 -\int_\rho^R rp\left(p''+\frac1r p'+(1-p^2)p\right)s\dd r\\
 &=\left[rpp's\right]_{\rho}^{R}-\int_\rho^R\frac{p^2}{r}s\dd r.
\end{align*}
Finally, $(|z_\theta|^2-1)-s=|z_\theta|^2-|z|^2$, and substitution proves
\eqref{3-6}.
\end{proof}

To proceed, let
\begin{equation*}
 \Acal_R^\infty
 :=\{u\in C^\infty(\overline{B_R(0)};\C):
 u(Re^{i\theta})=e^{i\theta}
 \text{ for every }\theta\in[0,2\pi]\}.
\end{equation*}
For $u\in\Acal_R^\infty$, define its Cartesian angular Fourier
coefficients by
\begin{equation*}
 c_m(r):=\frac{1}{2\pi}\int_0^{2\pi}
 u(r,\theta)e^{-im\theta}\dd\theta,
 \qquad m\in\mathbb Z,
 \qquad 0\leq r\leq R,
\end{equation*}
and, for $r>0$, set
\begin{equation*}
 a_m(r):=\frac{c_m(r)}{f(r)},
 \qquad
 b(r):=ra_0(r)=\frac{rc_0(r)}{f(r)}.
\end{equation*}
We will also write, for $0<r\leq R$,
\begin{equation*}
 d(r):=f(r)^2-F(r)^2>0,
 \qquad
 h(r):=\frac{d(r)}{r^2}>0.
\end{equation*}
The expansions of $f$ and $F$ at the origin give $d(0)=0$ and
the removable smooth extension
\[
 h(0):=\beta^2-\alpha^2>0.
\]

  Every nonzero Fourier mode of $u/f$ has enough Cartesian vanishing to give a finite
integral.  The zero mode may have a simple pole when $u(0)\ne0$, so we replace
$a_0$ by the bounded variable $b=ra_0$.  The next lemma shows that the new
boundary term cancels exactly with the boundary term from the annular formula.

\begin{lemma}
\label{3-7}
Let $u\in\Acal_R^\infty$.  For $m\neq0$ the integral\[
 Q_m[a_m]
 :=\pi\int_0^R
 \left(r d|a_m'|^2+(m^2-1)\frac{d}{r}|a_m|^2\right)\dd r\]
is finite and nonnegative.  Moreover, $b(R)=0$ and
\[
 Q_0[b]
 :=\pi\int_0^R\left(r h|b'|^2+h'|b|^2\right)\dd r,
\]
is absolutely convergent.  The inner boundary term from
Lemma~\ref{3-4} and the zero-mode integration by
parts cancel:
\begin{equation}\label{3-9}
 \lim_{\rho\to0^{+}}
 \left(
 \mathfrak B_\rho+\pi h(\rho)|b(\rho)|^2
 \right)=0,
\end{equation}
where
\begin{equation}\label{3-10}
 \mathfrak B_\rho
 :=-\frac{1}{2}\int_0^{2\pi}
 \rho\bigl(f(\rho)f'(\rho)-F(\rho)F'(\rho)\bigr)
 \left(\left|\frac{u(\rho,\theta)}{f(\rho)}\right|^2-1\right)
 \dd\theta.
\end{equation}
\end{lemma}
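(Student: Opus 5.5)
The plan is to read off everything from the smoothness of $u$ near the origin together with the factorizations $f(r)=rA_f(r^2)$ and $F(r)=rA_F(r^2)$. The identity $b(R)=0$ is immediate. Since $u(R,\theta)=e^{i\theta}$, we get $c_0(R)=\frac1{2\pi}\int_0^{2\pi}e^{i\theta}\dd\theta=0$, and therefore $b(R)=Rc_0(R)/f(R)=0$.

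\emph{Local structure of the Fourier coefficients.} Since $u\in C^\infty(\overline{B_R(0)};\C)$, each term $c_m(|x|)e^{im\theta}$ is the angular projection of a smooth function, hence itself smooth. By the standard description of smooth functions of a fixed angular frequency, we can write $c_m(r)=r^{|m|}\varphi_m(r^2)$ with $\varphi_m$ smooth on $[0,R^2]$. (Equivalently, a Taylor expansion of $u$ in $x_1+ix_2$ and $x_1-ix_2$ shows that $c_m(r)=O(r^{|m|})$ and $c_m'(r)=O(r^{|m|-1})$.) Dividing by $f=rA_f(r^2)$, where $A_f(0)=\beta>0$, gives
\[
 a_m=r^{|m|-1}\psi_m(r^2),\qquad b=\frac{c_0}{A_f(r^2)}
\]
with $\psi_m$ and $b$ smooth up to $r=0$. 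Moreover $d=r^2h$ with $h$ smooth and $h(0)=\beta^2-\alpha^2$, by Proposition~\ref{2-4}.

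\emph{Finiteness and signs.} For $m\ne0$ we have $|a_m|^2=O(r^{2|m|-2})$ and $|a_m'|^2=O(r^{2|m|-4})$; for $|m|=1$ we use instead that $a_{\pm1}$ is smooth, so $a_{\pm1}'=O(r)$. Combined with $d=O(r^2)$, both integrands of $Q_m$ are $O(r^{2|m|-1})$, hence integrable at $0$. They are continuous on $(0,R]$, so $Q_m[a_m]$ is finite. It is nonnegative because $d>0$ and $m^2-1\ge0$. For $Q_0$, the functions $b$, $b'$, $h$, $h'$ are all bounded on $[0,R]$, so the integrand $rh|b'|^2+h'|b|^2$ is bounded and the integral converges absolutely. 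Note that $h'$ may change sign, which is why only absolute convergence, and not a sign, is claimed.

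\emph{Boundary cancellation.} This is the delicate step, because individual terms have nonzero limits.
\begin{enumerate}
\item Write $ff'-FF'=\tfrac12 d'$ and $d'=2rh+r^2h'$, so that
\[
 \rho\bigl(ff'-FF'\bigr)(\rho)=\rho^2h(\rho)+\tfrac12\rho^3h'(\rho).
\]
\item By Parseval, $\frac1{2\pi}\int_0^{2\pi}|u(\rho,\theta)|^2f(\rho)^{-2}\dd\theta=\sum_m|a_m(\rho)|^2$. Hence
\[
 \mathfrak B_\rho=-\pi\Bigl(\rho^2h+\tfrac12\rho^3h'\Bigr)\Bigl(\sum_m|a_m(\rho)|^2-1\Bigr).
\]
\item The zero mode contributes $-\pi h(\rho)|b(\rho)|^2-\tfrac{\pi}{2}\rho h'(\rho)|b(\rho)|^2$. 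Its first part cancels exactly against $\pi h(\rho)|b(\rho)|^2$, and its second part is $O(\rho)$ because $b$ and $h'$ are bounded.
\item The constant term $-1$ contributes $O(\rho^2)$.
\item The nonzero modes need a bound uniform in $m$; controlling this tail sum is the main obstacle. I would avoid mode-by-mode estimates and use Parseval again:
\[
 \sum_{m\ne0}|c_m(\rho)|^2=\frac1{2\pi}\int_0^{2\pi}|u(\rho,\theta)-c_0(\rho)|^2\dd\theta\le4\|\nabla u\|_\infty^2\rho^2 .
\]
This holds because both $u(\rho,\theta)$ and $c_0(\rho)$ lie within $\|\nabla u\|_\infty\rho$ of $u(0)$. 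Since $f(\rho)\ge c\rho$ for small $\rho$, it follows that $\sum_{m\ne0}|a_m(\rho)|^2$ is bounded. Multiplied by $\rho^2h+\tfrac12\rho^3h'=O(\rho^2)$, this contribution tends to $0$.
\end{enumerate}
Summing the pieces yields \eqref{3-9}.
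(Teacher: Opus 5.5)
Your proposal is correct. The treatment of $b(R)=0$, the finiteness and sign of $Q_m[a_m]$, and the absolute convergence of $Q_0[b]$ matches the paper: both use the vanishing orders $c_m=O(r^{|m|})$, $c_m'=O(r^{|m|-1})$ from the Cartesian Taylor expansion, division by $f=rA_f(r^2)$, and $d=r^2h$.

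Where you take a different route is the cancellation \eqref{3-9}.

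\emph{The paper's route.} It evaluates the two terms separately. It uses $\frac1{2\pi}\int_0^{2\pi}|u|^2\dd\theta=|u(0)|^2+O(r^2)$, $r(ff'-FF')=(\beta^2-\alpha^2)r^2+O(r^4)$ and $b(\rho)=u(0)/\beta+O(\rho^2)$. From these it finds $\mathfrak B_\rho\to-\pi(1-\alpha^2/\beta^2)|u(0)|^2$ and $\pi h(\rho)|b(\rho)|^2\to\pi(1-\alpha^2/\beta^2)|u(0)|^2$, and the two limits add to zero.

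\emph{Your route.} You rewrite $\rho(ff'-FF')=\rho^2h+\frac12\rho^3h'$ and split $\mathfrak B_\rho$ by Parseval. The leading zero-mode piece is then exactly $-\pi h(\rho)|b(\rho)|^2$ for every $\rho$. The remainder needs only boundedness of $b$ and $h'$, plus the Lipschitz bound $\sum_{m\ne0}|c_m(\rho)|^2\le4\|\nabla u\|_\infty^2\rho^2$.

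\emph{Comparison.} Your version has three advantages:
\begin{itemize}
\item it gives an exact identity at each $\rho$ with an explicit $O(\rho)$ error;
\item it needs neither the limiting values nor second-order Taylor information on $u$ (first-order bounds suffice for this step);
\item it shows the cancellation is the same algebraic fact $d=r^2h$ that produces the boundary term in \eqref{3-12}.
\end{itemize}
The paper's version instead exhibits that each term separately has a nonzero limit when $u(0)\neq0$. That is the point it stresses, since it is why neither boundary term may be discarded on its own.
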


\begin{proof}
Put $z=\frac{u}{f}=\sum_{m\in\mathbb Z}a_m(r)e^{im\theta}$ on $0<r\leq R$.
For each $r>0$, Parseval gives
\begin{align*}
 \int_0^{2\pi}|z_r|^2\dd\theta&=2\pi\sum_m|a_m'|^2,\\
 \int_0^{2\pi}|z_\theta|^2\dd\theta&=2\pi\sum_m m^2|a_m|^2,\\
 \int_0^{2\pi}|z|^2\dd\theta&=2\pi\sum_m|a_m|^2.
\end{align*}
After multiplication by the radial weights and integration, these identities
give
\begin{align}
 &\frac{1}{2}\int_{A_{\rho,R}}d
 \left(|z_r|^2+\frac{|z_\theta|^2-|z|^2}{r^2}\right)\dd x\notag\\
 &=\pi\sum_{m\in\mathbb Z}\int_\rho^R
 \left(r d|a_m'|^2+(m^2-1)\frac{d}{r}|a_m|^2\right)\dd r.
 \label{3-11}
\end{align}
Fix $m\ne0$ and choose $N\geq|m|+1$.  With
$\zeta=x_1+ix_2$, the Cartesian Taylor expansion at the origin has the form
\[
 u(re^{i\theta})=\sum_{j+\ell\leq N}\gamma_{j\ell}
 r^{j+\ell}e^{i(j-\ell)\theta}+O(r^{N+1}).
\]
The remainder may be differentiated once with an $O(r^N)$ bound.  Taking
the $m$th angular coefficient retains only pairs with $j-\ell=m$; since
$j+\ell\geq|j-\ell|=|m|$, we obtain
\[
 c_m=O(r^{|m|}),\qquad c_m'=O(r^{|m|-1}).
\]
Since $f=\beta r+O(r^3)$ and
$d=(\beta^2-\alpha^2)r^2+O(r^4)$, it follows that
\[
 a_m=O(r^{|m|-1}),\qquad a_m'=O(r^{|m|-2}),
\]
and hence
$rd|a_m'|^2+\frac{d}{r}|a_m|^2=O(r^{2|m|-1})$.  Thus $Q_m[a_m]$ is finite and,
because $d>0$ and $m^2-1\geq0$, nonnegative.

The boundary condition gives $c_0(R)=b(R)=0$.  Since $a_0=\frac{b}{r}$ and
$d=r^2h$,
\[
 r d\left(|a_0'|^2-\frac{|a_0|^2}{r^2}\right)
 =r h|b'|^2-h(|b|^2)'.
\]
Thus integration by parts leads to
\begin{equation}
 \pi\int_\rho^R rd\left(|a_0'|^2-\frac{|a_0|^2}{r^2}\right)\dd r
 =\pi\int_\rho^R
 \left(r h|b'|^2+h'|b|^2\right)\dd r
 +\pi h(\rho)|b(\rho)|^2.
 \label{3-12}
\end{equation}

To justify both the bulk limit and its boundary correction, angular
averaging of the Taylor expansion at the origin gives
\[
 c_0(r)=u(0)+O(r^2),\qquad c_0'(r)=O(r).
\]
Writing $f(r)=rA_f(r^2)$, we have
\[
 b(r)=\frac{c_0(r)}{A_f(r^2)}=\frac{u(0)}\beta+O(r^2),
 \qquad b'(r)=O(r),
\]
while $h=\beta^2-\alpha^2+O(r^2)$ and $h'=O(r)$.  Hence
$rh|b'|^2=O(r^3)$ and $h'|b|^2=O(r)$, proving absolute convergence of
$Q_0[b]$.

We now compute the two boundary limits.  Angular averaging removes the
linear part of the Cartesian Taylor expansion, so
\[
 \frac1{2\pi}\int_0^{2\pi}|u(r,\theta)|^2\dd\theta
 =|u(0)|^2+O(r^2).
\]
The expansions of $f$ and $F$ also give
\[
 r(ff'-FF')=(\beta^2-\alpha^2)r^2+O(r^4),\qquad
 f^2=\beta^2r^2+O(r^4).
\]
Consequently,
\[
 \frac1{2\pi}\int_0^{2\pi}
 \left(\frac{|u(r,\theta)|^2}{f(r)^2}-1\right)\dd\theta
 =\frac{|u(0)|^2}{\beta^2r^2}+O(1),
\]
and substitution into \eqref{3-10} yields
\begin{align*}
 \mathfrak B_\rho
 &=-\pi\left(1-\frac{\alpha^2}{\beta^2}\right)|u(0)|^2+o(1),\\
 \pi h(\rho)|b(\rho)|^2
 &=\pi\left(1-\frac{\alpha^2}{\beta^2}\right)|u(0)|^2+o(1).
\end{align*}
The second line follows directly from
$h(\rho)=\beta^2-\alpha^2+O(\rho^2)$ and
$b(\rho)=\frac{u(0)}{\beta}+O(\rho^2)$.
Their sum tends to zero, proving
\eqref{3-9}.
In particular, neither boundary term may be discarded before the two
radial identities are subtracted: each has a nonzero limit whenever
$u(0)\ne0$, and only their signed sum is removable.
\end{proof}

We can now subtract the two annular identities and let the inner radius tend
to zero.  The whole-plane comparison gives one part of the energy difference.
The part left over is a sum of Fourier forms and a quartic integral.  The next
proposition states this exact formula for smooth competitors. We define $D_R[u]$ through
$$ 
 \mathcal E_R(u)-\mathcal E_R(fe^{i\theta})=\EGL(\sigma u;B_R(0))-\EGL(V;B_R(0))+D_R[u].$$
\begin{proposition}
\label{3-13}
For $u\in\Acal_R^\infty$,   we have
\begin{align*}
 D_R[u]
 ={}&Q_0[b]+\sum_{m\neq0}Q_m[a_m]\\
 &+\frac{1}{4}\int_{B_R(0)}
 \left(1-\left(\frac{F}{f}\right)^4\right)
 (|u|^2-f^2)^2\dd x.
\end{align*}
\end{proposition}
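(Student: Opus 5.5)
Since $D_R[u]$ is defined by subtraction, I would compute $\mathcal E_R(u)-\mathcal E_R(fe^{i\theta})$ and $\EGL(\sigma u;B_R(0))-\EGL(V;B_R(0))$ separately on an annulus $A_{\rho,R}$ and then let $\rho\to0^+$. The disk parts $B_\rho(0)$ contribute $o(1)$: $u$, $fe^{i\theta}$, $V$ and $\sigma u$ are all in $H^1$ (Lemma~\ref{2-6}), and every integrand is integrable. So everything reduces to the annular identities.

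\textbf{Step 1: the two annular identities.} On $A_{\rho,R}$ write $z=u/f$, which is $C^1$ up to the boundary since $f>0$ there. Apply Lemma~\ref{3-4} twice:
\begin{itemize}
\item with $p=f$ and this $z$, since $fz=u$;
\item with $p=F$ and the same $z$, since $Fz=\sigma u$ and $Fe^{i\theta}=V$.
\end{itemize}
Subtracting the second identity from the first gives the following.
\begin{itemize}
\item The quadratic term has weight $f^2-F^2=d$.
\item The quartic term becomes $\frac14\int (f^4-F^4)(|z|^2-1)^2$. Since $(|z|^2-1)^2=(|u|^2-f^2)^2/f^4$, this equals $\frac14\int(1-(F/f)^4)(|u|^2-f^2)^2\,\dd x$, which is the stated quartic term.
\item The outer boundary term at $r=R$ vanishes, because $f(R)=1$ and $|u|=1$ on $\partial B_R$ give $|z|^2-1=0$.
\item The inner boundary term is exactly $\mathfrak B_\rho$ from \eqref{3-10}.
\end{itemize}

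\textbf{Step 2: Fourier decomposition.} By \eqref{3-11}, the $d$-weighted quadratic integral over $A_{\rho,R}$ equals $\pi\sum_m\int_\rho^R(\cdots)\dd r$. For the zero mode I use \eqref{3-12}, which rewrites it as the truncated $Q_0[b]$ integral plus $\pi h(\rho)|b(\rho)|^2$. So on the annulus the difference of energies equals
\begin{itemize}
\item the truncated $Q_0$ and $Q_m$ integrals,
\item plus the annular quartic integral,
\item plus $\mathfrak B_\rho+\pi h(\rho)|b(\rho)|^2$.
\end{itemize}

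\textbf{Step 3: letting $\rho\to0$.} The boundary combination tends to $0$ by \eqref{3-9}. $Q_0$ converges absolutely by Lemma~\ref{3-7}. The quartic integrand is bounded near $0$, since $1-(F/f)^4$ is bounded and $u$ is smooth.

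The one real obstacle is exchanging the limit $\rho\to0$ with the infinite sum over $m\ne0$. All terms with $m\neq0$ are nonnegative, so monotone convergence in $\rho$ (applied termwise, then Tonelli for the sum) gives
\[
\lim_{\rho\to0}\sum_{m\ne0}\int_\rho^R=\sum_{m\neq0}Q_m[a_m],
\]
with the value possibly $+\infty$ a priori. Finiteness then follows because every other term in the identity has a finite limit. Alternatively, finiteness follows from the $m\neq0$ part of the left side of \eqref{3-11}: the function $z-a_0$ is bounded by the Taylor argument of Lemma~\ref{3-7}, and its radial and angular derivatives are $O(1/r)$ only through the $m=\pm1$ terms, which are controlled uniformly.

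Collecting the limits gives $D_R[u]$ in the stated form.
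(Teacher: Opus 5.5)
Your proposal is correct and follows the paper's proof essentially step for step. You apply Lemma~\ref{3-4} with $p=f$ and $p=F$ on $A_{\rho,R}$, split the quadratic part into Fourier modes via \eqref{3-11} and \eqref{3-12}, cancel the combined inner boundary terms by \eqref{3-9}, use monotone convergence for the nonnegative $m\neq0$ modes and the quartic term, and let the small-ball energies vanish. Your explicit remark that $\sum_{m\neq0}Q_m[a_m]$ is finite because every other limit in the identity is finite is a small, valid supplement to what the paper leaves implicit.
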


\begin{proof}
Fix $0<\rho<R$ and set $z=\frac{u}{f}$ on $A_{\rho,R}$. Let $\sigma=\frac{F}{f}$. Then $u=fz$,
$\sigma u=Fz$, and $z=e^{i\theta}$ at $r=R$, so the outer boundary terms
in Lemma~\ref{3-4} vanish.  Applying that lemma with
$p=f$ and $p=F$ and subtracting gives
\begin{align*}
 &\bigl[\EGL(u;A_{\rho,R})-\EGL(fe^{i\theta};A_{\rho,R})\bigr]
 -\bigl[\EGL(\sigma u;A_{\rho,R})-\EGL(V;A_{\rho,R})\bigr]\\
 ={}&\frac12\int_{A_{\rho,R}}d
 \left(|z_r|^2+\frac{|z_\theta|^2-|z|^2}{r^2}\right)\dd x
 +\mathfrak B_\rho\\
 &\quad+\frac14\int_{A_{\rho,R}}
 \left(1-\left(\frac Ff\right)^4\right)(|u|^2-f^2)^2\dd x.
\end{align*}
Here we used
\[
 (f^4-F^4)(|z|^2-1)^2
 =\left(1-\left(\frac Ff\right)^4\right)(|u|^2-f^2)^2.
\]

By \eqref{3-11}, the quadratic term is the sum of
its angular modes.  For the zero mode,
\eqref{3-12} and
\eqref{3-9} give
\begin{align*}
 &\lim_{\rho\downarrow0}\biggl\{\mathfrak B_\rho
 +\pi\int_\rho^R rd
 \left(|a_0'|^2-\frac{|a_0|^2}{r^2}\right)\dd r\biggr\}\\
 &=\lim_{\rho\downarrow0}\biggl\{\mathfrak B_\rho
 +\pi h(\rho)|b(\rho)|^2
 +\pi\int_\rho^R(rh|b'|^2+h'|b|^2)\dd r\biggr\}=Q_0[b].
\end{align*}
For $m\ne0$ the mode integrands are nonnegative.  Tonelli's theorem and
monotone convergence therefore yield
\[
 \lim_{\rho\downarrow0}\pi\sum_{m\ne0}\int_\rho^R
 \left(rd|a_m'|^2+(m^2-1)\frac d r|a_m|^2\right)\dd r
 =\sum_{m\ne0}Q_m[a_m].
\]
Since $0<\frac{F}{f}<1$, the quartic integrand is nonnegative.  It is integrable
because $\frac{F}{f}$ is bounded and $u,f$ are smooth on the closed disk; monotone
convergence therefore sends its annular integral to the integral over
$B_R(0)$.

It remains to identify the limit of the left-hand side.  The formulas at the
origin give
\[
 fe^{i\theta}=A_f(|x|^2)(x_1+ix_2),\qquad
 V=A_F(|x|^2)(x_1+ix_2).
\]
Moreover, $\sigma(r)=\frac{A_F(r^2)}{A_f(r^2)}$, so $\sigma u$ is smooth at the
origin.  Thus the energy densities of $u$, $fe^{i\theta}$, $\sigma u$,
and $V$ are bounded near $0$, and for each of these maps $q$,
\[
 \EGL(q;B_\rho(0))=O(\rho^2)\qquad(\rho\downarrow0).
\]
Subtracting the four small-ball energies shows that the left-hand side of
the annular identity converges to $D_R[u]$.  Combining this fact with the
mode limits proves the asserted decomposition.
\end{proof}
 
The preceding formula is an exact nonlinear identity.  Its terms show three different effects of
replacing the disk solution $f$ by the smaller whole-plane solution $F$.
The coefficient $d=f^2-F^2$ measures the loss in the weighted Dirichlet
part, whereas $1-\left(\frac{F}{f}\right)^4$ measures the corresponding loss in the quartic
part.  Thus the quartic remainder is nonnegative without any smallness
assumption on $u-fe^{i\theta}$.  After angular decomposition, the same
Dirichlet loss becomes
\[
 \pi\int_0^R\left(rd|a_m'|^2+(m^2-1)\frac d r|a_m|^2\right)\dd r.
\]
For $|m|\geq2$ both terms are nonnegative.  For $m=\pm1$ the
zeroth-order coefficient vanishes, so $Q_{\pm1}$ measures only radial
variation; equality later forces $a_{\pm1}$ to be constant, and the
boundary trace determines those constants.  The mode $m=0$ is the only one
different: its angular coefficient has the wrong sign, and positivity is
recovered only after $a_0=\frac{b}{r}$ converts it into $Q_0[b]$.  Hence the
zero-mode estimate below is the only part not supplied directly by $F<f$.

The annular identity is true at any fixed inner radius $\rho$. At that stage division by $f$ is
regular and Lemma~\ref{3-4} is an ordinary
integration-by-parts formula.  The limit $\rho\downarrow0$ is then taken
in three different ways.  For $m\ne0$, the Cartesian vanishing order gives
an integrable nonnegative density, so Tonelli and monotone convergence
apply.  For $m=0$, neither a separate boundary limit nor monotone
convergence is available: one must first combine $\mathfrak B_\rho$ with
the boundary term in \eqref{3-12}.  Finally,
the quartic term is treated only after it has been rewritten as
$\left(1-\left(\frac{F}{f}\right)^4\right)(|u|^2-f^2)^2$.  

The zeroth angular
coefficient of a smooth Cartesian map need not vanish at the origin, while
division by $f$ creates precisely a simple pole.  The
renormalization $b=ra_0$ removes that pole, and
$h=\frac{f^2-F^2}{r^2}$ has a positive finite limit.  If $u(0)\ne0$, the two
boundary terms have the separate nonzero limits displayed above, but their
sum vanishes.  Thus the apparent contribution at the origin is created by
division by $f$, which vanishes there.  Once these two boundary terms are combined, the
nonzero-mode and quartic terms pass to the disk monotonically, while the
four annular energies converge by absolute continuity.

\section{Picone identity and positivity of the zero Fourier mode}
\label{4-1}
This section supplies the sign needed for the zero Fourier mode left open by
the preceding decomposition.  We first prove two estimates for the
whole-plane function $F$: its initial slope is larger than $1/2$, and its
logarithmic slope satisfies $y\leq F^2$.  We then choose a positive Picone
multiplier that rewrites the zero-mode form as a square plus a term weighted by
a scalar function $S$.  A closed first-order system reduces the positivity of
$S$ to a first-zero calculation, and the remaining polynomial sign is checked
by elementary estimates and Bernstein coefficients.  This gives the
positive weight needed for the zero-mode estimate.

We begin with following
\begin{lemma}
\label{4-2}
The whole-plane initial slope satisfies
\begin{equation*}
 \alpha>\frac{1}{2}.
\end{equation*}
\end{lemma}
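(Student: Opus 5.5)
The plan is to argue by shooting, using the Herv\'e--Herv\'e classification recalled in Section~\ref{2-1}. Every initial slope larger than $\alpha$ gives a solution that increases until it reaches the level $1$ at a finite radius. The slope $\alpha$ itself gives $F$, which is increasing with $0<F<1$. Hence, if the solution $p$ of
\[
 p''+\frac1rp'-\frac1{r^2}p+(1-p^2)p=0,\qquad p(0)=0,\quad p'(0)=\tfrac12,
\]
has a first critical point $r_0$ with $0<p<1$ on $(0,r_0]$ and $p'(r_0)=0$, then $\frac12$ is neither $\alpha$ nor larger than $\alpha$. That forces $\alpha>\frac12$. So the whole task is to exhibit such an $r_0$ for the explicit slope $\frac12$.

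To locate $r_0$, I would first trap $p$ between explicit functions on an interval $[0,r_1]$ on which $p<1$.

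\emph{Upper bound.} As long as $0<p<1$, the right-hand side of $p''+\frac1rp'-\frac1{r^2}p=-(1-p^2)p$ is negative. The solution of the linear operator with slope $\frac12$ is $r/2$, so a Wronskian comparison gives $p\le r/2$.

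\emph{Lower bound.} Since $p''+\frac1rp'-\frac1{r^2}p+p=p^3\ge0$, Sturm comparison with $J_1$ (whose initial slope is also $\frac12$) gives $p\ge J_1$ on the interval where $J_1>0$. In a usable polynomial form this reads $p\ge \frac r2-\frac{r^3}{16}$.

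\emph{Refinement.} These first bounds are too crude. A quick check with $p\le r/2$ and $p\ge \frac r2-\frac{r^3}{16}$ in the identity below does not close at $r=2$. So I would bootstrap once or twice. I would insert the current upper bound into $1-p^2$ and the current lower bound into $p$, solve the resulting linear inequalities, and obtain sharper two-sided polynomial bounds of the type $\frac r2-\frac{r^3}{16}+c\,r^5$ (consistent with the expansion \eqref{2-3} at $a=\frac12$).

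The key identity is the one already used in Section~\ref{2-1}:
\[
 r^2\Bigl(p'(r)-\frac{p(r)}{r}\Bigr)=-\int_0^r s^2(1-p(s)^2)p(s)\dd s .
\]
It gives
\[
 p'(r)\le \frac{p_+(r)}{r}-\frac1{r^2}\int_0^r s^2\bigl(1-p_+(s)^2\bigr)p_-(s)\dd s,
\]
where $p_\pm$ are the upper and lower polynomial bounds. The right-hand side is an explicit polynomial in $r$. I would then find a rational $r_0$ at which it is $\le0$ while $p_+(r_0)<1$. This shows that $p$ reaches a critical point strictly below the level $1$, which yields the contradiction described in the first paragraph.

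The main obstacle is quantitative. The true value is $\alpha\approx0.583$, so slope $\frac12$ is subthreshold only by a modest margin, and the comparison bounds must be sharp enough to detect this on a bounded interval before $p$ could approach $1$. If two bootstrap steps still do not suffice, I would switch to a monotone energy functional along the orbit. One candidate is
\[
 \mathcal H(r)=\frac{(rp')^2}{2}-\frac{p^2}{2}+\frac{r^2}{4}\bigl(2p^2-p^4\bigr).
\]
The idea is to compare its value at a finite radius, computed via the polynomial bounds, with the value it would need for the orbit to continue increasing to the level $1$.
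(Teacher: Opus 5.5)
\textbf{There is a genuine gap: the decisive quantitative step is neither carried out nor workable with the tools you name.}

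Your reduction is sound. By the Herv\'e--Herv\'e classification, a first critical point of the slope-$\tfrac12$ solution below level $1$ rules out both $\tfrac12=\alpha$ and $\tfrac12>\alpha$. The bounds $p\le r/2$ (valid while $0<p<1$) and $p\ge J_1$ (valid while $J_1>0$) are also correct. After that, however, the whole content of the lemma is a numerical inequality, and you do not verify it.

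Numerically, the slope-$\tfrac12$ solution turns only at $r_0\approx2.33$, with $p(r_0)\approx0.67$. So $p'(2)\approx0.08>0$, which is why nothing can close at $r=2$. For $r\le2.5$ the negative value of $p'$ is only a few hundredths, e.g.\ $p'(2.4)\approx-0.02$ and $p'(2.5)\approx-0.04$.

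Bounds of your proposed type cannot resolve a margin this small:
\begin{itemize}
\item An upper bound $\tfrac r2-\tfrac{r^3}{16}+c_+r^5$ is valid near $0$ only if $c_+\ge\tfrac1{128}$. At $r=2.4$ it is then at least $0.95$, while $p(2.4)\approx0.67$.
\item This inflates $p_+/r$ by about $0.12$.
\item It also shrinks $1-p_+^2$ to below $0.09$, whereas the true value is about $0.55$.
\end{itemize}
So your upper bound for $p'(r_0)$ stays positive. To make the scheme close you would need many Picard iterates, whose degrees triple at each step because of $p^3$, or piecewise interval arithmetic. That is a computer-assisted verification, and it is absent from the proposal.

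The fallback does not supply an argument either. One has $\mathcal H'=\tfrac r2p^2(2-p^2)\ge0$ along every trajectory, and $\mathcal H\to+\infty$ along $F$ itself. You identify no threshold value of $\mathcal H$, and none is apparent.

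\textbf{The missing idea} is an explicit comparison function with initial slope exactly $\tfrac12$ that also tends to $1$ at infinity. This turns the question into a maximum principle and needs no numerics. The paper takes $G(r)=r/\sqrt{r^2+4}$, which is a strict subsolution:
\[
 G''+\frac{G'}{r}-\frac{G}{r^2}+G(1-G^2)=\frac{3r^3}{(r^2+4)^{5/2}}>0 .
\]
Writing $F=GZ$ gives
\[
 (rG^2Z')'+\frac{3r^4GZ}{(r^2+4)^{5/2}}+rG^4Z(1-Z^2)=0,
\]
so $Z$ cannot have a local minimum with value in $(0,1)$. Now suppose $\alpha\le\tfrac12$. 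Then $Z(0)=2\alpha\le1$, and
\[
 Z=2\alpha\Bigl(1+\tfrac{16\alpha^2-7}{384}r^4+O(r^6)\Bigr)
\]
is strictly decreasing near $0$. Since $Z\to1$ at infinity, $Z$ must have an interior minimum with value in $(0,1)$, which is a contradiction. Only the expansion of $F$ at the origin up to order $r^5$ is used.
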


\begin{proof} 
Let $G(r)=\frac{r}{\sqrt{r^2+4}}$ and write $F=GZ$.  A direct calculation gives
\[
 G''+\frac1rG'-\frac1{r^2}G+G(1-G^2)
 =\frac{3r^3}{(r^2+4)^{\frac{5}{2}}}>0.
\]
Writing $F=GZ$ and collecting the terms containing $Z'$ gives
\[
 0=GZ''+\left(2G'+\frac Gr\right)Z'
 +\frac{3r^3Z}{(r^2+4)^{\frac{5}{2}}}+ZG^3(1-Z^2).
\]
Multiplication by $rG$ turns the first two terms into $(rG^2Z')'$ and
leads to
\begin{equation}\label{4-3}
 (rG^2Z')'
 +\frac{3r^4GZ}{(r^2+4)^{\frac{5}{2}}}
 +rG^4Z(1-Z^2)=0.
\end{equation}
At a local minimum $r_0$ with $0<Z(r_0)<1$, one has
\[
 (rG^2Z')'(r_0)=r_0G(r_0)^2Z''(r_0)\geq0,
\]
whereas the remaining terms in \eqref{4-3} are
strictly positive.  Such a local minimum is therefore impossible.

Note that
$G(r)=\frac{r}{2}\left(1-\frac{r^2}{8}+\frac{3r^4}{128}+O(r^6)\right).$  Thus
\begin{align*}
 Z(r)&=2\alpha\left(1+\frac{16\alpha^2-7}{384}r^4+O(r^6)\right),\\
 Z'(r)&=\frac{\alpha(16\alpha^2-7)}{48}r^3+O(r^5).
\end{align*}
Suppose to the contrary that $\alpha\leq\frac{1}{2}$.  Then $Z(0)\leq 1$, $16\alpha^2-7<0$, so $Z'<0$ and
$0<Z<1$ on $(0,r_*]$ for some $r_*>0$.  

Now let us choose $0<r_1<r_2\leq r_*$.  Since
$Z(r_2)<Z(r_1)<1$ and $Z(r)\to1$, there is $L>r_2$ with
$Z(L)>Z(r_2)$.  The minimum of $Z$ on $[r_1,L]$ is therefore attained at
an interior point and has value in $(0,1)$, contradicting the preceding
observation.  Hence $\alpha>\frac{1}{2}$.
\end{proof}
We remark that this lower bound can also be deduced from the lower bound of $F$ obtained in \cite{DPLMWY}.

The next estimate connects the size of $F$ with its logarithmic slope.
The quantity $y=1-rF'/F$ lies between zero and one, but that alone is not
enough for the later contact calculation purpose.  We will need the sharper bound
$y\leq F^2$. We have not been able to find a reference for this inequality, so we give detailed proof here.
\begin{lemma}
\label{4-4}
For every $r>0$,
\begin{equation}\label{4-5}
 F(r)^2\geq y(r).
\end{equation}
\end{lemma}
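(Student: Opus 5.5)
The plan is a first-zero (contact) argument for the quantity
\[
 W(r):=F(r)^2-y(r)=F(r)^2-1+\frac{rF'(r)}{F(r)}.
\]
I would recast \eqref{1-5} as a closed first-order system in $P:=F^2$ and $y$. Set $w=rF'/F=1-y$. Using $(rF')'=F/r-r(1-F^2)F$, one gets $rw'=1-w^2-r^2(1-P)$. Hence
\[
 ry'=-y(2-y)+r^2(1-P),\qquad rP'=2P(1-y),
\]
and therefore
\[
 rW'=2P(1-y)+y(2-y)-r^2(1-P).
\]

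\emph{Step 1: the two ends.} Near the origin, \eqref{2-3} gives $P=\alpha^2r^2+O(r^4)$ and $y=\frac{r^2}{4}+O(r^4)$. So $W=(\alpha^2-\frac14)r^2+O(r^4)$, which is positive for small $r$ by Lemma~\ref{4-2}. This is exactly where $\alpha>\frac12$ enters. At infinity, the stated asymptotics give $F^2=1-r^{-2}-2r^{-4}+O(r^{-6})$ and $rF'/F=r^{-2}+5r^{-4}+O(r^{-6})$. Thus $W=3r^{-4}+O(r^{-6})>0$ for large $r$. Consequently, if \eqref{4-5} fails, $W$ has a first zero $r_0\in(0,\infty)$, with $W>0$ on $(0,r_0)$ and $W'(r_0)\le0$.

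\emph{Step 2: contradiction at contact.} At $r_0$ we have $P=y$, and the formula above gives
\[
 r_0W'(r_0)=y(4-3y)-r_0^2(1-y).
\]
This is positive exactly when
\[
 r_0^2<\frac{y(4-3y)}{1-y}.
\]
So the argument reduces to ruling out contact at radii that are too large relative to $y=F^2$. For this I need an a priori upper bound on $r$ in terms of $F^2$, that is, a lower bound for $F$. I would then check whether that bound gives the required inequality under the contact constraint.

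The supersolution from Lemma~\ref{4-2} is the first candidate. Its no-interior-minimum argument, combined with $Z(0)=2\alpha>1$ and $Z\to1$, should yield $F\ge G$, i.e.\ $r^2\le 4P/(1-P)$. However, at $P=y$ this gives only $r^2\le 4y/(1-y)$, which is slightly weaker than needed because $4y\ge y(4-3y)$. This step is the main obstacle.

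\emph{Step 3: closing the gap.} I would try two things, in order. First, a sharper comparison function, for example $G_c=r/\sqrt{r^2+c}$ with a correction term, or one of the algebraic lower bounds in the style of \cite{DPLMWY}. These would be verified as subsolution/supersolution inequalities by the same minimum-principle argument as in Lemma~\ref{4-2}. Second, if that is not enough, I would strengthen the contact analysis. At a degenerate contact with $W'(r_0)=0$, I would differentiate once more using the system, so that a sign of a second-derivative expression in $(r,y)$ at contact suffices. This would reduce the question to a polynomial sign condition in $(r^2,y)$ on an explicit region, checked by elementary estimates or Bernstein coefficients, as is done elsewhere in this section.
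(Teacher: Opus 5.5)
Your Steps 1--2 coincide with the paper's proof. You use the same system $rP'=2P(1-y)$, $ry'=y^2-2y+r^2(1-P)$, the same end behaviour of $W$ (with $\alpha>\frac12$ entering at the origin), and the same contact identity $r_0W'(r_0)=-J(r_0)$, where $J:=r^2(1-P)-P(4-3P)$. There is a genuine gap, though: the argument stops at the crux, and neither remedy in Step~3 can close it.

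At a first zero you only know $W'(r_0)\le0$, i.e.\ $J(r_0)\ge0$. To conclude you would need $J(r_0)<0$, and for the true profile this fails at large radii. Near the origin $J=(1-4\alpha^2)r^2+O(r^4)<0$. Carrying the expansion of $F$ one order further gives $F=1-\frac{1}{2r^2}-\frac{9}{8r^4}-\frac{161}{16r^6}+O(r^{-8})$, so $P=1-r^{-2}-2r^{-4}-19r^{-6}+\dots$ and $J=18r^{-4}+O(r^{-6})>0$. The map $P\mapsto P(4-3P)/(1-P)$ is increasing. So replacing $F^2$ by any lower bound only makes the required inequality $r^2<P(4-3P)/(1-P)$ harder, and it already fails for $F^2$ itself at large $r$. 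Hence no sharper subsolution can supply it at every potential contact radius. When $J(r_0)>0$ and $W'(r_0)<0$ (a transversal first crossing), nothing goes wrong locally at all. Your second-derivative idea addresses only the degenerate case $J(r_0)=0$.

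The missing idea is to track $J$ itself along the system and to use the \emph{exit} point of the negative region, not only its entry point. Differentiating,
\[
rJ'=2r^2(1-P)-2P(1-y)\,(r^2+4-6P).
\]
At a zero of $J$, where $r^2=P(4-3P)/(1-P)$, this becomes $rJ'=2P(4-3P)-2P(1-y)\frac{3P^2-6P+4}{1-P}$. If moreover $W\le0$, then $0<1-y\le1-P$, and $3P^2-6P+4>0$ gives $rJ'\ge6P^2(1-P)>0$. Thus inside $\{W\le0\}$ the function $J$ can cross zero only upward.

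Since $W>0$ near $0$ and near $\infty$, every component $(c,d)$ of $\{W<0\}$ is relatively compact, so it has a right endpoint. At $c$, $W'(c)\le0$ gives $J(c)\ge0$. The one-way crossing (with $J'(c)>0$ if $J(c)=0$) then gives $J>0$ on all of $(c,d)$. At $d$, $W'(d)\ge0$ gives $J(d)\le0$, hence $J(d)=0$. But then $J'(d)>0$ contradicts $J>0$ just to the left of $d$.

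This is how the paper proves $W\ge0$. Your asymptotics at infinity are needed not merely to produce a first zero, but to force the negative region to end.
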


\begin{proof}
Put $a=F^2$, $A=a-y$, and $J=r^2(1-a)-a(4-3a)$.  Since
$\frac{rF'}{F}=1-y$, the radial equation gives
\begin{equation}\label{4-6}
 ra'=2a(1-y),\qquad ry'=y^2-2y+r^2(1-a),
\end{equation}
Indeed, if $q=\frac{rF'}{F}=1-y$, then the radial equation yields
\[
 rq'=q+\frac{r^2F''}{F}-q^2=1-r^2(1-a)-q^2,
\]
which is the second identity; the first follows by differentiating
$a=F^2$.  Consequently, at every zero of $A$,
\begin{equation}\label{4-7}
 rA'=a(4-3a)-r^2(1-a)=-J.
\end{equation}
Differentiating $J$ and using $ra'=2a(1-y)$ yields
\[
 rJ'=2r^2(1-a)-(r^2+4-6a)\,2a(1-y).
\]
At a zero of $J$, where $r^2=\frac{a(4-3a)}{1-a}$, this becomes
\begin{equation}\label{4-8}
 rJ'=2a(4-3a)-2a(1-y)\frac{3a^2-6a+4}{1-a}.
\end{equation}
If also $A\leq0$, then $a\leq y<1$, and hence $0<1-y\leq1-a$.
Because $3a^2-6a+4>0$, \eqref{4-8} implies
\begin{equation}\label{4-9}
 rJ'\geq2a(4-3a)-2a(3a^2-6a+4)=6a^2(1-a)>0.
\end{equation}

Near the origin, \eqref{2-3} gives
\[
 a=\alpha^2r^2-\frac{\alpha^2}{4}r^4+O(r^6),\qquad
 y=\frac14r^2+\left(\frac1{96}-\frac{\alpha^2}{6}\right)r^4+O(r^6).
\]
At infinity, the asymptotic formulas for $F$ give
\[
 a=1-r^{-2}-2r^{-4}+O(r^{-6}),\qquad
 y=1-r^{-2}-5r^{-4}+O(r^{-6}).
\]
Together with Lemma~\ref{4-2}, these yield
\begin{align*}
 A(r)&=\left(\alpha^2-\frac14\right)r^2+O(r^4)>0 &&(r\downarrow0),\\
 A(r)&=3r^{-4}+O(r^{-6})>0 &&(r\to\infty).
\end{align*}
In particular, every component of $\{A<0\}$ is relatively compact in
$(0,\infty)$.  If $(c,d)$ is such a component, then
$A(c)=A(d)=0$, and \eqref{4-7} gives
$J(c)\geq0\geq J(d)$.  We use the consequence of
\eqref{4-9} that $J$ cannot pass from a positive value to a
nonpositive one while $A\leq0$: the first zero after a positive point
would have $J'\leq0$, contrary to that equation.  If $J(c)=0$, the same
inequality gives $J'(c)>0$, so $J>0$ immediately to the right of $c$.
It follows that $J>0$ throughout $(c,d)$.  Thus $J(d)=0$; but
\eqref{4-9} at $d$ gives $J'(d)>0$, which forces $J(r)<0$ for
$r<d$ sufficiently close to $d$, again a contradiction.  Hence
$A\geq0$.
\end{proof} 

\bigskip
The two estimates above give the signs needed in the only Fourier mode
that is not already nonnegative.  We now build a positive comparison function
for this mode and use it in a Picone identity.  The rest of this section proves
the one-variable inequality required by that identity.

The angular gap $m^2-1\geq0$ already controls every nonzero mode.  Only
$m=0$ has a negative angular term, and it leaves the form
\[
 Q_0[b]=\pi\int_0^R(rh|b'|^2+h'|b|^2)\dd r.
\]
A direct estimate of $h'$ does not work because $h'$ has no fixed sign.
We instead choose a positive function $\phi$ and use the standard Picone
identity.  The formulas for $f/F$ make all second derivatives cancel and
leave one scalar expression $S$.

Let us introduce\[
 M:=k^2-1>0,
 \qquad
 \eta:=\frac{rk'}{k^2-1}=\frac{rk'}{M},
 \qquad
 t:=rF.\]
 Note that \begin{equation}\label{5-9}
 \eta(r)=\frac{\alpha\beta}{6}r^4+O(r^6)
 \qquad\text{as }r\to0^{+}.
\end{equation}
These variables contain the information about $f$ and $F$ used below. More precisely, $t$ describes $F$, while $M,\eta$ describe the differences between $f$ and $F$.  Their
derivatives can all be written in terms of the other variables, with no
second derivatives of $f$ or $F$.  This makes it possible to differentiate
$S$ at a possible first zero.  The next lemma gives the required system and
the initial size of $\eta$.
\begin{lemma}
\label{5-3}
On $(0,R]$,
\begin{align}
 r\frac{\dd t}{\dd r}&=t(2-y),\label{5-4}\\
 r\frac{\dd y}{\dd r}&=y^2-2y+r^2-t^2,
 \label{5-5}\\
 r\frac{\dd k}{\dd r}&=M\eta,\label{5-6}\\
 r\frac{\dd M}{\dd r}&=2kM\eta,\label{5-7}\\
 r\frac{\dd\eta}{\dd r}
 &=kt^2-2(1-y)\eta-2k\eta^2.
 \label{5-8}
\end{align}
\end{lemma}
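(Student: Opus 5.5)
The plan is to derive each identity directly from the definitions $t=rF$, $y=1-\frac{rF'}{F}$, $k=\frac fF$, $M=k^2-1$ and $\eta=\frac{rk'}{M}$, using only the radial equation \eqref{1-5} and the quotient equation for $k$ obtained in the proof of Proposition~\ref{2-4}. First I will record that all these quantities are well defined on $(0,R]$: $F>0$ there, and $M>0$ by Proposition~\ref{2-4}, so the division in $\eta$ is legitimate.

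The first two identities concern $F$ alone. For \eqref{5-4}, differentiate $t=rF$ to get
\[
 rt'=rF+r^2F'=t\left(1+\frac{rF'}{F}\right)=t(1+(1-y))=t(2-y).
\]
For \eqref{5-5}, I will invoke the second identity in \eqref{4-6}, namely $ry'=y^2-2y+r^2(1-F^2)$, which was derived from \eqref{1-5} in the proof of Lemma~\ref{4-4}. Since $r^2F^2=t^2$, this is exactly \eqref{5-5}.

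The next two identities are immediate. Equation \eqref{5-6} is the definition of $\eta$ rearranged. For \eqref{5-7}, $M'=2kk'$ gives $rM'=2k\,(rk')=2kM\eta$.

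The only real computation is \eqref{5-8}. Writing $\eta=\frac{rk'}{M}$, the quotient rule gives
\[
 r\eta'=\frac{r(rk')'}{M}-\frac{rk'}{M}\cdot\frac{rM'}{M}.
\]
The second term equals $\eta\cdot 2k\eta=2k\eta^2$ by \eqref{5-7}. For the first term, I will use the $k$-equation from the proof of Proposition~\ref{2-4},
\[
 k''+\left(2\frac{F'}F+\frac1r\right)k'+F^2k(1-k^2)=0,
\]
multiplied by $r^2$. This gives
\[
 r^2k''=-\bigl(2(1-y)+1\bigr)rk'+t^2kM,
\]
where I used $\frac{rF'}{F}=1-y$, $r^2F^2=t^2$ and $1-k^2=-M$. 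Hence
\[
 r(rk')'=rk'+r^2k''=-2(1-y)\,rk'+kt^2M.
\]
Dividing by $M$ yields $kt^2-2(1-y)\eta$, and subtracting $2k\eta^2$ gives \eqref{5-8}.

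There is no genuine obstacle here. The only points needing care are the sign bookkeeping when substituting $1-k^2=-M$, and the observation that every right-hand side is expressed without second derivatives of $f$ or $F$, which is the purpose of the lemma.
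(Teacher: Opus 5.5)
Your proof is correct and follows the paper's route: the first four identities are obtained exactly as in the paper. For \eqref{5-8} you apply the quotient rule to $\eta=rk'/M$ together with the expanded second-order $k$-equation. The paper instead uses the divergence form $(F^2M\eta)'=rF^4kM$ and the logarithmic derivative $r(F^2M)'/(F^2M)=2(1-y)+2k\eta$, which is the same computation arranged differently.
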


\begin{proof}
From $t=rF$ and $\frac{rF'}{F}=1-y$, one obtains $rt'=t(2-y)$, while the second
identity is \eqref{4-6}.  The definition of $\eta$ gives
$rk'=M\eta$ and hence $rM'=2krk'=2kM\eta$.

It remains to derive the equation for $\eta$.  Since
$h=\frac{f^2-F^2}{r^2}=\frac{F^2M}{r^2}$, the quotient equation
\eqref{2-5} and $rk'=M\eta$ give
\begin{equation}\label{5-10}
 (F^2M\eta)'=(rF^2k')'=rF^4kM.
\end{equation}
Moreover, we have
\[
 r\frac{(F^2M)'}{F^2M}=2\frac{rF'}F+\frac{rM'}M=2(1-y)+2k\eta.\]
Expanding the left-hand side of \eqref{5-10}, dividing
by $F^2M$, and multiplying by $r$, we obtain
\[
 r\eta'+\{2(1-y)+2k\eta\}\eta=r^2F^2k=kt^2,
\]
which is \eqref{5-8}. This finishes the proof.
\end{proof}

The variable $\eta$ is positive because $k$ is increasing.  We also need an
upper bound that is uniform in the disk radius $R$.  The line $r/\sqrt2$ is a
strict upper comparison for the equation in Lemma~\ref{5-3}.  This bound will be used
only when checking the sign of one coefficient at a zero of $S$.
\begin{lemma}
\label{5-12}
For every $0<r\leq R$,
\begin{equation*}
 0<\eta(r)<\frac{r}{\sqrt{2}}.
\end{equation*}
\end{lemma}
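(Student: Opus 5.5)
The positivity part is immediate from earlier results, and the upper bound will come from a first-contact comparison argument for the Riccati-type equation \eqref{5-8} against the line $\psi(r):=r/\sqrt2$.

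For positivity: by Proposition~\ref{2-4} we have $k>1$ and $k'>0$ on $(0,R]$. Hence $M=k^2-1>0$ and $\eta=rk'/M>0$ there.

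For the upper bound, I will show that $\eta$ starts strictly below $\psi$ and can never touch it.

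\emph{Start near the origin.} By \eqref{5-9}, $\eta(r)=\frac{\alpha\beta}{6}r^4+O(r^6)$. So $\eta(r)<r/\sqrt2$ for all sufficiently small $r>0$.

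\emph{Set up the contradiction.} Suppose the bound fails somewhere in $(0,R]$. Let $r_*\in(0,R]$ be the first point with $\eta(r_*)=r_*/\sqrt2$. Then $\eta<\psi$ on $(0,r_*)$. Since $\eta-\psi$ is negative to the left of $r_*$ and vanishes at $r_*$, we get $(\eta-\psi)'(r_*)\geq0$, that is,
\[
 r_*\eta'(r_*)\geq r_*\psi'(r_*)=\frac{r_*}{\sqrt2}>0 .
\]

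\emph{Evaluate the equation at the contact point.} Substituting $\eta=r/\sqrt2$ and $t=rF$ into \eqref{5-8} at $r=r_*$ gives
\[
 r\eta'=kr^2F^2-\sqrt2\,(1-y)\,r-k r^2
 =-k r^2(1-F^2)-\sqrt2\,(1-y)\,r .
\]
Every term on the right has a definite sign:
\begin{itemize}
\item $k>1>0$ by Proposition~\ref{2-4};
\item $0<F<1$ by the ODE facts recalled in Section~\ref{2-1};
\item $1-y=rF'/F>0$ because $F'>0$.
\end{itemize}
Therefore $r_*\eta'(r_*)<0$, which contradicts $r_*\eta'(r_*)>0$ from the previous step. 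Hence no contact point exists, and $\eta(r)<r/\sqrt2$ on all of $(0,R]$.

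\emph{Where the care is needed.} There is no real obstacle here; the argument needs only the sign facts already established. The one point to watch is that the contact point $r_*$ lies in the open region where $\eta$ is smooth. This holds because $\eta$ is smooth on $(0,R]$, and a contact at the right endpoint $r_*=R$ is still handled by the one-sided derivative inequality used above.
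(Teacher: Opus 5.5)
Your proof is correct and matches the paper's argument. Positivity comes from Proposition~\ref{2-4}, the start near the origin from \eqref{5-9}, and the upper bound from a first-contact argument. There, \eqref{5-8} evaluated on the line $\eta=r/\sqrt2$ gives $r\eta'=-kr^2(1-F^2)-\sqrt2\,r(1-y)<0$, which contradicts the one-sided derivative inequality $r\eta'\geq r/\sqrt2$ at the least contact point (including when that point is $R$).
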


\begin{proof}
Proposition~\ref{2-4} gives $k'>0$ and $M>0$, hence
$\eta>0$.  Set $g(r):=\eta(r)-\frac{r}{\sqrt2}$.  By
\eqref{5-9}, there is $\delta>0$ such that $g<0$ on
$(0,\delta]$.  If $g$ were nonnegative somewhere, continuity would give a
least zero $r_0\in(\delta,R]$.  Then $g<0$ on $(0,r_0)$ and $g(r_0)=0$,
so $D_-g(r_0)\geq0$.  Since $g$ is $C^1$ from the left also when $r_0=R$,
this is $r_0\eta'(r_0)\geq\frac{r_0}{\sqrt2}$.  At the contact value,
however, \eqref{5-8} gives
\[
 r_0\eta'(r_0)=-k(r_0)r_0^2\{1-F(r_0)^2\}
 -\sqrt2r_0\{1-y(r_0)\}<0,
\]
because $k>1$ by Proposition~\ref{2-4}, $0<F<1$, and $0<y<1$ by the
logarithmic-slope bounds.  This contradiction proves the
strict upper bound.
\end{proof}

The multiplier can be found directly from logarithmic derivatives.  Since
\[
 \frac{rh'}h=2(k\eta-y),
\]
we look for a function whose logarithmic derivative is $-(y+k\eta)$.
For a product $F^a r^bM^c$ one has
\[
 r\frac{\dd}{\dd r}\log(F^a r^bM^c)
 =(a+b)-ay+2c k\eta.
\]
Requiring this to equal $-(y+k\eta)$ gives $a=1$, $b=-1$, and
$c=-\frac{1}{2}$, which gives the function below.

To proceed, let us define the positive Picone multiplier
\begin{equation}\label{5-13}
 \phi:=\frac{F^2}{r\sqrt{f^2-F^2}}
 =\frac{F}{r\sqrt{M}}>0
 \quad\text{on }(0,R],
\end{equation}
The Picone identity will contain the expression
$-(rh\phi')'+h'\phi$.  The main point is that this expression has a simple
sign after division by the positive factor $rh\phi$.  All second derivatives
can be removed with the first-order system above.  The result is the scalar
function $S$ in the next lemma.
\begin{lemma}
\label{5-14}
The function $\phi$ satisfies
\begin{equation}\label{5-15}
 \frac{-(r h\phi')'+h'\phi}{r h\phi}=\frac{S}{r^2},
\end{equation}
where
\begin{equation}\label{5-16}
 S:=r^2(1-F^2+f^2)-4y-2y^2-\eta^2
 =r^2+Mt^2-4y-2y^2-\eta^2.
\end{equation}
\end{lemma}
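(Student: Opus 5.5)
The plan is to compute everything through logarithmic derivatives, so that the first-order system of Lemma~\ref{5-3} eliminates every second derivative of $f$ and $F$. Set
\[
 L:=\frac{r\phi'}{\phi},\qquad P:=\frac{rh'}{h}.
\]
Since $\phi=F r^{-1}M^{-1/2}$ and $\frac{rF'}{F}=1-y$, the identity \eqref{5-7} gives
\[
 L=(1-y)-1-\tfrac12\cdot 2k\eta=-(y+k\eta).
\]
This is exactly the design condition used to choose $\phi$. Similarly, $h=F^2M/r^2$ together with \eqref{5-7} gives $P=2(1-y)+2k\eta-2=2(k\eta-y)$.

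\textbf{Step 1: reduce the left side of \eqref{5-15} to $L$, $P$ and $rL'$.} Write $rh\phi'=h\phi L$. Then
\[
 (rh\phi')'=(h\phi L)'=\frac{h\phi}{r}\Bigl(r\frac{(h\phi)'}{h\phi}L+rL'\Bigr)=\frac{h\phi}{r}\bigl((P+L)L+rL'\bigr).
\]
Also $h'\phi=\frac{h\phi}{r}P$. Dividing by $rh\phi>0$, the left side of \eqref{5-15} becomes
\[
 \frac{1}{r^2}\Bigl(P-(P+L)L-rL'\Bigr).
\]
So it suffices to show that $S=P-(P+L)L-rL'$.

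\textbf{Step 2: evaluate each piece.} Here $P+L=k\eta-3y$, so
\[
 -(P+L)L=(k\eta-3y)(y+k\eta)=k^2\eta^2-2yk\eta-3y^2.
\]
For $-rL'=r(y+k\eta)'=ry'+\eta\,rk'+k\,r\eta'$, substitute \eqref{5-5}, \eqref{5-6} and \eqref{5-8}:
\[
 -rL'=(y^2-2y+r^2-t^2)+M\eta^2+\bigl(k^2t^2-2k(1-y)\eta-2k^2\eta^2\bigr).
\]
Adding $P=2k\eta-2y$, the terms cancel as follows:
\begin{itemize}
 \item the linear terms in $k\eta$ cancel;
 \item the terms $\pm 2yk\eta$ cancel;
 \item the $\eta^2$ coefficient is $k^2+M-2k^2=-1$, because $M=k^2-1$;
 \item the $y$ terms give $-4y$ and the $y^2$ terms give $-2y^2$;
 \item the $t^2$ coefficient is $k^2-1=M$.
\end{itemize}
This leaves $S=r^2+Mt^2-4y-2y^2-\eta^2$.

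\textbf{Step 3: the alternative form of $S$.} Note that $Mt^2=(k^2-1)r^2F^2=r^2(f^2-F^2)$. Hence $r^2+Mt^2=r^2(1-F^2+f^2)$, which gives the other expression in \eqref{5-16}.

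The only real obstacle is sign and bookkeeping discipline. One must remember that $\phi$ was built so that $L=-(y+k\eta)$, and that all derivatives are taken on $(0,R]$, where $F,M>0$ and the system of Lemma~\ref{5-3} is valid. No limit at the origin is needed for this pointwise identity.
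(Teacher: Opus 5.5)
Your proposal is correct and follows essentially the same route as the paper. Both arguments use the logarithmic derivatives $p=rh'/h=2(k\eta-y)$ and $q=r\phi'/\phi=-(y+k\eta)$, reduce the left side to $\frac{1}{r^2}\bigl(p-pq-q^2-rq'\bigr)$, and then substitute the first-order system of Lemma~\ref{5-3}. The only cosmetic difference is that the paper packages $r(k\eta)'$ into a single equation for $\zeta=k\eta$. You expand it directly as $\eta\,rk'+k\,r\eta'$ using \eqref{5-6} and \eqref{5-8}, and your term-by-term cancellations check out.
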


\begin{proof}
Since $h=\frac{F^2M}{r^2}$ and $\phi=\frac{F}{r\sqrt M}$, put
\[
 p:=\frac{rh'}h=2(k\eta-y),
 \qquad q:=\frac{r\phi'}\phi=-(y+k\eta).
\]
These identities follow from $\frac{rF'}{F}=1-y$ and $\frac{rM'}{M}=2k\eta$.  A
logarithmic differentiation gives
\begin{equation}\label{5-17}
 \frac{-(r h\phi')'+h'\phi}{r h\phi}
 =\frac1{r^2}\{p-rq'-pq-q^2\}.
\end{equation}
Write temporarily $\zeta=k\eta$.  From \eqref{5-6} and
\eqref{5-8},
\begin{equation}\label{5-18}
 r\zeta'=k^2t^2-2\zeta(1-y)-(M+2)\eta^2.
\end{equation}
Since $p=2(\zeta-y)$ and $q=-(y+\zeta)$, the numerator in
\eqref{5-17} is
\[
 p-rq'-pq-q^2=ry'+r\zeta'+2\zeta-2y+\zeta^2-3y^2-2y\zeta.
\]
Substituting \eqref{5-5} and
\eqref{5-18}, and using $k^2=M+1$ and
$\zeta^2=k^2\eta^2$, gives
\begin{align*}
 p-rq'-pq-q^2
 &=r^2+(k^2-1)t^2-4y-2y^2-\eta^2\\
 &=r^2+Mt^2-4y-2y^2-\eta^2=S.
\end{align*}
Equation \eqref{5-15} follows, and
$r^2+Mt^2=r^2(1-F^2+f^2)$ gives the other form of $S$.
\end{proof}

The key
feature of the residual identity is that all second derivatives of $f$ and $F$ have disappeared.
The variables $M,y,\eta,t$ satisfy the closed first-order system above,
so at a zero of $S$ its derivative is determined solely by the value of
these variables at that point.  This makes a first-contact argument
possible and explains the otherwise special-looking multiplier $\phi$.
The choice of $\phi$ also fits the bounded variable $b$.
Near the origin both $h$ and $\phi$ approach positive constants, so the
Picone identity neither imposes an extra condition $b(0)=0$ nor
loses the possible value $\frac{u(0)}{\beta}$.  Once the identity is closed on the
weighted space, pointwise positivity of $S$ makes the second Picone weight
strictly positive and rules out a nonzero null vector.

Notice also that \eqref{5-15} holds throughout
$(0,R]$, not only at a contact point.  Thus $S$ is first differentiated
using its second expression, and the relation $S=0$ is imposed only
afterward to eliminate $r^2$ from the differentiated identity.

We now insert the last identity into the quadratic form for $b$.  Completing
the square gives a nonnegative derivative term and a term containing $S$.
On an annulus there is also a boundary contribution, so it must be kept until
the inner radius tends to zero.  The following is the exact Picone formula we
will use.
\begin{lemma}
\label{5-19}
Let $0<\rho<R$, and let
$b\in C^1([\rho,R];\C)$ satisfy $b(R)=0$.  Then
\begin{equation}
\begin{aligned}
 &\int_\rho^R\left(r h|b'|^2+h'|b|^2\right)\dd r\\
 ={}&\int_\rho^R r h\phi^2
 \left|\left(\frac{b}{\phi}\right)'\right|^2\dd r
 +\int_\rho^R\frac{hS}{r}|b|^2\dd r+\left[r h\frac{\phi'}{\phi}|b|^2\right]_{r=\rho}^{r=R}.
 \label{5-20}
\end{aligned}    
\end{equation}

\end{lemma}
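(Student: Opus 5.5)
This is the standard Picone computation: write $b=\phi w$ with $w=b/\phi$, expand, integrate the cross term by parts, and then use Lemma~\ref{5-14}. The setting is safe for this. On $[\rho,R]$ the functions $h$ and $\phi$ are $C^1$ (indeed smooth) and strictly positive, since $f>F>0$ there by Proposition~\ref{2-4} and $\phi>0$ by \eqref{5-13}. Hence $w\in C^1([\rho,R];\C)$, and the pointwise identity
\[
 |b'|^2=\phi^2|w'|^2+2\phi\phi'\operatorname{Re}(\bar w w')+\phi'^2|w|^2
\]
holds. Since $2\operatorname{Re}(\bar w w')=(|w|^2)'$, we obtain
\[
 rh|b'|^2=rh\phi^2|w'|^2+rh\phi\phi'(|w|^2)'+rh\phi'^2|w|^2 .
\]

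The key step is to integrate the middle term by parts on $[\rho,R]$:
\[
 \int_\rho^R rh\phi\phi'(|w|^2)'\dd r
 =\bigl[rh\phi\phi'|w|^2\bigr]_\rho^R-\int_\rho^R (rh\phi\phi')'|w|^2\dd r .
\]
Next expand $(rh\phi\phi')'=(rh\phi')'\phi+rh\phi'^2$. The $rh\phi'^2|w|^2$ pieces then cancel, and we are left with
\[
 \int_\rho^R\bigl(rh|b'|^2+h'|b|^2\bigr)\dd r
 =\int_\rho^R rh\phi^2|w'|^2\dd r
 +\int_\rho^R\bigl(-(rh\phi')'\phi+h'\phi^2\bigr)|w|^2\dd r
 +\bigl[rh\phi\phi'|w|^2\bigr]_\rho^R .
\]
This requires $\phi\in C^2$ so that $(rh\phi')'$ makes sense. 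That holds on $[\rho,R]$ because $f$ and $F$ are smooth and $M=k^2-1>0$ there.

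Finally, rewrite the potential term using \eqref{5-15}. Since $|w|^2=|b|^2/\phi^2$,
\[
 \bigl(-(rh\phi')'+h'\phi\bigr)\phi|w|^2=\frac{-(rh\phi')'+h'\phi}{rh\phi}\,rh|b|^2=\frac{hS}{r}|b|^2 .
\]
The boundary term becomes $rh\frac{\phi'}{\phi}|b|^2$, which is exactly \eqref{5-20}. Note that the hypothesis $b(R)=0$ makes the upper endpoint contribution vanish. It is not needed for the identity itself, so the stated bracket is simply the formula with that term kept explicitly.

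There is no real obstacle here. The only points needing care are the regularity of $\phi$ up to $r=R$, including the one-sided endpoint, which follows from smoothness of $f$ and $F$ on $[0,R]$, and the use of the real part for complex-valued $b$.
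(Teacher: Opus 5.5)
Your proof is correct and is essentially the paper's argument. The paper writes the same Picone computation as the pointwise identity $rh|b'|^2+h'|b|^2=rh\phi^2|(b/\phi)'|^2+\frac{-(rh\phi')'+h'\phi}{\phi}|b|^2+\bigl(rh\frac{\phi'}{\phi}|b|^2\bigr)'$ and integrates it, whereas you substitute $b=\phi w$ and integrate the cross term by parts, which is the same computation; both then identify the potential weight as $hS/r$ via Lemma~\ref{5-14}.
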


\begin{proof}
Because $\phi$ is real and positive, we may take the real part of the cross
term.  In particular,
\[
 (|b|^2)'=2\Re(b'\overline b),
\]
and hence
\[
 \phi^2\left|\left(\frac b\phi\right)'\right|^2
 =|b'|^2-\frac{\phi'}\phi(|b|^2)'
 +\left(\frac{\phi'}\phi\right)^2|b|^2.
\]
Using
$\left(\frac{rh\phi'}{\phi}\right)'=\frac{(rh\phi')'}{\phi}-rh\left(\frac{\phi'}{\phi}\right)^2$,
we obtain the pointwise identity
\[
 rh|b'|^2+h'|b|^2
 =rh\phi^2\left|\left(\frac b\phi\right)'\right|^2
 +\frac{-(rh\phi')'+h'\phi}{\phi}|b|^2
 +\left(rh\frac{\phi'}\phi|b|^2\right)'.
\]
Integration over $[\rho,R]$ and
$\frac{-(rh\phi')'+h'\phi}{\phi}=\frac{hS}{r}$ prove \eqref{5-20}.  The
hypothesis $b(R)=0$ makes the contribution at $R$ vanish; the bracket is
retained for the later limit $\rho\downarrow0$.
\end{proof}

The expansion at the origin gives the initial sign of $S$, but does not show
that $S$ is monotone.  At a possible first zero, the equation $S=0$ removes
$r^2$ from the derivative and the bound $F^2\geq y$ gives a lower bound for
$t^2$.  Before using that lower bound, we will check its coefficient with the
estimate $\eta<r/\sqrt2$.  We also set $U=rk'/(ky)$, which always lies in
$(0,1)$ and keeps the parameter range independent of $R$.  The next lemma
gives the derivative at a zero of $S$ and rewrites $\eta$ in terms of
$(M,U,y)$.

\begin{lemma}
\label{5-21}
Define $U:=\frac{rk'}{ky}$.  Then $M>0$, $0<U<1$, $0<y<1$, and
\begin{equation}\label{5-22}
 \eta=\frac{\sqrt{1+M}}{M}Uy.
\end{equation}
At every $r\in(0,R]$ such that $S(r)=0$,
\begin{align}
 \frac r2\frac{\dd S}{\dd r}
 ={}&2k\eta^3+(1-4y)\eta^2\notag\\
 &+t^2\left(k\eta(M-1)+(M+2)y+3M+2\right)
 -6y^3-8y^2.
 \label{5-23}
\end{align}
\end{lemma}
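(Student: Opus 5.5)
The first assertions follow from facts already established, and \eqref{5-23} is a direct computation with the closed first-order system of Lemma~\ref{5-3}. Proposition~\ref{2-4} gives $k>1$, so $M=k^2-1>0$. The logarithmic-slope bounds after Proposition~\ref{2-4} give $0<y<1$. For the bound on $U$, I will use the identity $y-\left(1-\frac{rf'}{f}\right)=\frac{rk'}{k}$ together with $1-\frac{rf'}{f}>0$. These give $0<\frac{rk'}{k}<y$, where positivity comes from $k'>0$, and dividing by $y>0$ yields $0<U<1$. For \eqref{5-22}, I write $rk'=kUy$ and $k=\sqrt{1+M}$, the positive root since $k>0$. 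Then $\eta=\frac{rk'}{M}=\frac{\sqrt{1+M}}{M}Uy$.

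For \eqref{5-23}, I will differentiate the second expression $S=r^2+Mt^2-4y-2y^2-\eta^2$ on all of $(0,R]$. This gives
\[
 r\frac{\dd S}{\dd r}=2r^2+t^2\,r\frac{\dd M}{\dd r}+2Mt\,r\frac{\dd t}{\dd r}-(4+4y)\,r\frac{\dd y}{\dd r}-2\eta\,r\frac{\dd\eta}{\dd r}.
\]
Next I substitute \eqref{5-4}, \eqref{5-5}, \eqref{5-7} and \eqref{5-8}, which removes all second derivatives. The result is a polynomial in $r^2,t^2,M,k,y,\eta$. In it, $r^2$ appears linearly with total coefficient $2-4-4y=-(2+4y)$.

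Only after this step do I impose $S(r)=0$. I use it in the form $r^2=-Mt^2+4y+2y^2+\eta^2$ to eliminate $r^2$, then collect by powers of $t^2$ and $\eta$ and divide by $2$. The $t^2$ coefficient collects as
\[
 kM\eta+M(2-y)+(2+2y)+(1+2y)M-k\eta=k\eta(M-1)+(M+2)y+3M+2,
\]
which matches \eqref{5-23}. The cubic term $2k\eta^3$ comes from $-2\eta\cdot(-2k\eta^2)$. The $\eta^2$ terms come from $4(1-y)\eta^2$ and $-(2+4y)\eta^2$, which combine to $2(1-4y)\eta^2$. The pure $y$ terms come from $-(4+4y)(y^2-2y)$ and $-(2+4y)(4y+2y^2)$, and should combine to $-12y^3-16y^2$. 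Halving then gives \eqref{5-23}.

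There is no conceptual obstacle. The only risk is bookkeeping, mainly the order of operations: the differentiation must come first, and $S=0$ may be used only afterward to eliminate $r^2$. I also need to track the factor $(4+4y)$ correctly when it multiplies $ry'$, since this is where most of the cancellation occurs.
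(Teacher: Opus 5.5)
Your proposal is correct and follows the paper's proof essentially step by step. The ranges come from Proposition~\ref{2-4} and the logarithmic-slope bounds, \eqref{5-22} comes from $rk'=kUy=M\eta$ with $k=\sqrt{1+M}$, and \eqref{5-23} comes from differentiating $S=r^2+Mt^2-4y-2y^2-\eta^2$ through the first-order system of Lemma~\ref{5-3}, using $S=0$ only afterward to eliminate $r^2$; your coefficients for $t^2$, $\eta^2$, $\eta^3$ and the pure $y$-terms all check out.
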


\begin{proof}
The logarithmic-slope bounds above and
Proposition~\ref{2-4} give the stated ranges.
Furthermore,
$Uy=\frac{rk'}{k}=\frac{M\eta}{k}$, and $k=\sqrt{1+M}$ gives
\eqref{5-22}.  Differentiating
$S=r^2+Mt^2-4y-2y^2-\eta^2$ and multiplying by $\frac{r}{2}$ gives
\[
 \frac r2S'=r^2+\frac12(rM')t^2+Mt(rt')-2(1+y)ry'-\eta(r\eta').
\]
Using \eqref{5-4}--\eqref{5-8} first yields
\begin{align*}
 \frac r2S'
 ={}&r^2+kM\eta t^2+Mt^2(2-y)
 -2(1+y)(y^2-2y+r^2-t^2)\\
 &-\eta\{kt^2-2(1-y)\eta-2k\eta^2\}\\
 ={}&-(1+2y)r^2
 +t^2\{kM\eta+2M-My+2+2y-k\eta\}\\
 &-2y^3+2y^2+4y+2(1-y)\eta^2+2k\eta^3.
\end{align*}
At a zero of $S$, $r^2=-Mt^2+4y+2y^2+\eta^2$.  We then find that 
\[
 \frac r2S'=2k\eta^3+(1-4y)\eta^2-6y^3-8y^2
 +t^2\{k\eta(M-1)+(M+2)y+3M+2\},
\]
which is the desired identity \eqref{5-23}.
\end{proof}

At a contact point, define
\begin{equation*}
 C_t:=k\eta(M-1)+(M+2)y+3M+2.
\end{equation*}
Note that the derivative formula of $S$ contains $C_t t^2$.  We must know the sign of $C_t$
before replacing $t^2$ by a lower bound.  The cases $M\geq1$ and $0<M<1$
are slightly different, but both give a strict positive sign.  The same
calculation also gives the lower bound for $t^2$ needed in the next step.
\begin{lemma}
\label{5-24}
At every zero of $S$, $ C_t>0$, 
and
\begin{equation*}
 t^2\geq
 \frac{y\bigl(\eta^2+2y(y+2)\bigr)}{1+My}.
\end{equation*}
\end{lemma}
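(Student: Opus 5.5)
The plan is to work at a fixed zero $r\in(0,R]$ of $S$ and use only two ingredients. The first is the contact relation from \eqref{5-16},
\[
 r^2=-Mt^2+4y+2y^2+\eta^2 .
\]
The second is the two pointwise estimates already proved: $F^2\geq y$ (Lemma~\ref{4-4}) and $0<\eta<r/\sqrt2$ (Lemma~\ref{5-12}). We also use the ranges $M>0$, $0<y<1$, $k=\sqrt{1+M}>1$ from Lemma~\ref{5-21}.

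\emph{Lower bound for $t^2$.} This is the easy part, so I would do it first. Since $t=rF$, Lemma~\ref{4-4} gives $t^2=r^2F^2\geq r^2y$. Substituting the contact relation for $r^2$ yields
\[
 t^2\geq y\bigl(\eta^2+4y+2y^2\bigr)-My\,t^2 .
\]
Moving the last term to the left and dividing by $1+My>0$ gives the stated bound, because $4y+2y^2=2y(y+2)$.

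\emph{Sign of $C_t$.} I split into the two cases announced in the text.

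If $M\geq1$, every term of $C_t=k\eta(M-1)+(M+2)y+3M+2$ is nonnegative, and $3M+2>0$. Hence $C_t>0$.

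If $0<M<1$, only the first term is negative. It satisfies $k\eta(M-1)>-k\eta>-\sqrt2\,\eta$, since $k=\sqrt{1+M}<\sqrt2$. So it suffices to prove $\sqrt2\,\eta\leq 2+2y$, because $(M+2)y+3M+2>2+2y$. To bound $\eta$, combine Lemma~\ref{5-12} with the contact relation, dropping $-Mt^2\leq0$:
\[
 \eta^2<\frac{r^2}{2}\leq\frac{4y+2y^2+\eta^2}{2},
 \qquad\text{hence}\qquad
 \eta^2<4y+2y^2<6y .
\]
Therefore $\sqrt2\,\eta<\sqrt{12y}<4\sqrt y\leq 2+2y$, where the last step is AM--GM. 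This gives $C_t>0$ with strict inequality.

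The only delicate point is the case $0<M<1$. There the contact relation must be used to absorb $\eta^2$, since the bound $\eta<r/\sqrt2$ by itself is not uniform in $R$. The margin $\sqrt{12}<4$ is comfortable, so I expect no further difficulty.
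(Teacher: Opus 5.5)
Your proof is correct and follows the paper's argument. The lower bound for $t^2$ comes from the contact relation together with $F^2\ge y$; your rearrangement $t^2(1+My)\ge y(\eta^2+2y(y+2))$ is equivalent to the paper's use of the monotonicity of $q\mapsto q/(1+Mq)$. The sign of $C_t$ uses the same case split on $M$ together with Lemma~\ref{5-12}. The only difference is in the case $0<M<1$, where you discard $-Mt^2\le0$ and use $k<\sqrt2$ to get $\eta^2<6y$. This is slightly cruder but simpler than the paper's bound $\eta^2(1+2My)<2y(y+2)$ leading to $k\eta<2(y+1)$, and it does not need Lemma~4.4 in that step.
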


\begin{proof}
At $S=0$, equation \eqref{5-16} gives
\[
 r^2(1+MF^2)=\eta^2+2y(y+2),
 \qquad
 t^2
 =\frac{F^2}{1+MF^2}\{\eta^2+2y(y+2)\}.
\]
Since $F^2\geq y$ by Lemma~\ref{4-4} and
$q\mapsto\frac{q}{1+Mq}$ has derivative $(1+Mq)^{-2}>0$, all denominators
are positive, and therefore
\[
 \frac{F^2}{1+MF^2}\geq\frac{y}{1+My}.
\]
This proves the asserted lower bound without changing the sign of any
factor.

If $M\geq1$, then $C_t>0$ is immediate.  If $0<M<1$, the first identity
in \eqref{5-16} writes the middle fraction below as $r^2$; hence
Lemma~\ref{5-12} and $F^2\geq y$ give
\[
 2\eta^2<\frac{\eta^2+2y(y+2)}{1+MF^2}
 \leq\frac{\eta^2+2y(y+2)}{1+My}.
\]
Multiplication by $1+My>0$ yields
$\eta^2(1+2My)<2y(y+2)$.  Since $k^2=1+M$ and $k,\eta>0$,
\[
 k\eta<\left(\frac{2(1+M)y(y+2)}{1+2My}\right)^{\frac{1}{2}}.
\]
The quantity under the square root is strictly smaller than
$4(y+1)^2$, because
\[
 4(y+1)^2(1+2My)-2(1+M)y(y+2)
 =2y^2+4y+4+M(8y^3+14y^2+4y)>0.
\]
Thus $k\eta<2(y+1)$.  Since $1-M>0$, multiplication by $1-M$
preserves strictness, and
\begin{align*}
 C_t
 &=(M+2)y+3M+2-(1-M)k\eta\\
 &>(M+2)y+3M+2-2(1-M)(y+1)=M(3y+5)>0.
\end{align*}
\end{proof}

We need to prove that certain polynomial is positive in some region. For this purpose, we will use the Bernstein
basis because nonnegative Bernstein coefficients give positivity inside the
region.   We
state the formulas here.
The same test and conversion formula are used in
\cite[Lemma~3.3 and (3.18)]{DPLMWY}.
\begin{lemma}
\label{5-26}
Let $B_i^n(s)=\binom ni s^i(1-s)^{n-i}$.  If
$P(s)=\sum_{j=0}^np_js^j$, then
\[
 P(s)=\sum_{i=0}^n
 \left(\sum_{j=0}^ip_j\frac{\binom ij}{\binom nj}\right)B_i^n(s).
\] 
\end{lemma}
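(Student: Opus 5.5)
This is the standard power-to-Bernstein conversion, so the plan is a direct coefficient comparison in the monomial basis. The idea is to expand each monomial $s^j$ in the Bernstein basis of degree $n$, substitute, and interchange the finite sums.

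The first step is the elevation identity
\[
 s^j=\sum_{i=j}^n\frac{\binom ij}{\binom nj}B_i^n(s),\qquad 0\le j\le n.
\]
To verify it, write $s^j=s^j\bigl(s+(1-s)\bigr)^{n-j}$ and expand by the binomial theorem:
\[
 s^j=\sum_{\ell=0}^{n-j}\binom{n-j}{\ell}s^{j+\ell}(1-s)^{n-j-\ell}.
\]
Set $i=j+\ell$ and use the elementary identity
\[
 \binom{n-j}{i-j}=\frac{\binom ni\binom ij}{\binom nj},
\]
which follows by writing both sides as the multinomial coefficient $\frac{n!}{j!\,(i-j)!\,(n-i)!}$ divided by $\binom nj$. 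Each term then becomes $\frac{\binom ij}{\binom nj}B_i^n(s)$, which is the elevation identity.

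Next, multiply the identity by $p_j$, sum over $j=0,\dots,n$, and exchange the order of summation over the triangle $0\le j\le i\le n$. This gives
\[
 P(s)=\sum_{i=0}^n\Bigl(\sum_{j=0}^ip_j\frac{\binom ij}{\binom nj}\Bigr)B_i^n(s).
\]
That is exactly the claimed formula. It is an identity of polynomials, so it holds for all $s$, and in particular on $[0,1]$ where the positivity test will be applied.

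There is no real obstacle. The only point needing care is the binomial-coefficient identity and keeping the summation ranges straight ($i\ge j$) when the sums are swapped. If the tensor Bernstein version is needed later, it follows by applying this formula in each variable separately.
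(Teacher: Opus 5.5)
Your proof is correct: the expansion $s^j=s^j\bigl(s+(1-s)\bigr)^{n-j}$, the identity $\binom{n-j}{i-j}=\binom ni\binom ij/\binom nj$, and the interchange of summation over $0\le j\le i\le n$ together give exactly the stated formula. The paper gives no proof of this lemma; it only quotes it as the standard power-to-Bernstein conversion used by del Pino--Liu--Musso--Wei--Yang, so your argument is the usual derivation it implicitly relies on.
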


After the substitutions above, the derivative at a zero of $S$ is bounded
below by one explicit polynomial.  Its variables satisfy $M>0$ and
$0<U,Y<1$.  The following proposition proves positivity on this whole set.  It also states exactly how the polynomial
enters the derivative of $S$.
\begin{proposition}
\label{5-27}
For $M>0$ and $0<U,Y<1$, define
\begin{equation*}
 N(M,U,Y):=\sum_{j=0}^4A_j(U,Y)M^j,
\end{equation*}
where
\begin{align*}
 A_0={}&U^3Y(2-Y),\\
 A_1={}&U^2(UY^2+4UY+2Y^2-2Y+1),\\
 A_2={}&U(5U^2Y^2+2U^2Y-UY^2+2UY+U-2Y^2-4Y),
\\
 A_3={}&Y(3U^3Y-3U^2Y+4U^2+4Y+6),\\
 A_4={}&2(UY^2+2UY-2Y^2+Y+6).
\end{align*}
Then 
\begin{equation}\label{5-28}
 N(M,U,Y)>0.
\end{equation}
As a consequence, at  zero of $S$ where $r>0$, 
\begin{equation}\label{5-29}
 \frac r2\frac{\dd S}{\dd r}
 \geq\frac{y^2}{M^3(1+My)}N(M,U,y)>0.
\end{equation}
\end{proposition}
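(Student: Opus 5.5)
Two claims need proof: the polynomial inequality \eqref{5-28}, and the reduction \eqref{5-29} of the derivative at a zero of $S$ to $N$. The reduction comes first, because it fixes how $N$ arises.

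\emph{The reduction.} At a zero of $S$ with $r>0$ we start from \eqref{5-23}. By Lemma~\ref{5-24}, $C_t>0$, so in the term $C_tt^2$ we may replace $t^2$ by its lower bound
\[
 \frac{y(\eta^2+2y(y+2))}{1+My}.
\]
This gives
\[
 \frac r2 S'\ \geq\ 2k\eta^3+(1-4y)\eta^2-6y^3-8y^2+C_t\,\frac{y(\eta^2+2y(y+2))}{1+My}.
\]
Next we eliminate the remaining variables in favour of $(M,U,y)$. By \eqref{5-22}, $\eta=\sqrt{1+M}\,Uy/M$ and $k=\sqrt{1+M}$, so
\[
 k\eta=\frac{(1+M)Uy}{M},\qquad \eta^2=\frac{(1+M)U^2y^2}{M^2},\qquad k\eta^3=\frac{(1+M)^2U^3y^3}{M^3}.
\]
Every term is then rational in $(M,U,y)$ with no square roots left. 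We multiply through by $M^3(1+My)/y^2$, which is legitimate since each power of $y$ carries at least a factor $y^2$. Expanding and collecting powers of $M$ should reproduce exactly the coefficients $A_0,\dots,A_4$; this is a mechanical check, for instance by comparing the $M^0$ and $M^4$ coefficients first. Granting \eqref{5-28}, this yields \eqref{5-29}.

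\emph{Positivity of $N$.} Since $M>0$, it suffices to show that every $A_j$ is nonnegative on $(0,1)^2$ and that at least one is strictly positive.
\begin{itemize}
\item $A_0=U^3Y(2-Y)>0$.
\item For $A_1$, the factor $2Y^2-2Y+1=2(Y-\tfrac12)^2+\tfrac12>0$, so $A_1>0$.
\item $A_3>0$, since apart from $3U^3Y-3U^2Y=-3U^2Y(1-U)\geq-3U^2Y$, all terms are positive, and $4U^2+4Y+6-3U^2Y>0$.
\item $A_4=2\bigl(6+Y+UY^2+2UY-2Y^2\bigr)>0$ because $2Y^2<6$.
\end{itemize}
The only delicate coefficient is $A_2$, which can be negative: for small $U$ it is dominated by $-U(2Y^2+4Y)$. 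So termwise positivity fails, and we must use the neighbouring coefficients. The bracket inside $A_2$ is bounded below by $-(2Y^2+4Y)\geq-6Y$, so it suffices to absorb $6UYM^2$ by AM--GM, using
\[
 A_1M+A_3M^3\ \geq\ 2M^2\sqrt{A_1A_3}.
\]
The difficulty is that $A_1\sim U^2$ while $A_3\sim Y$, so $\sqrt{A_1A_3}\sim U\sqrt Y$. This beats $UY$ only up to constants, and the constants need checking. A cleaner alternative is needed.

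\emph{The main obstacle, and the fallback.} If the AM--GM route fails, we use the Bernstein test of Lemma~\ref{5-26}. First handle $M\in(0,1]$ and $M\geq1$ separately; on $M\geq1$ substitute $M=1/s$ and multiply by $s^4$. On each piece, write $N$ in the tensor Bernstein basis of degree $(4,3,2)$ in $(M\text{ or }s,U,Y)$ and verify that all coefficients are nonnegative, with the vertex coefficients strictly positive except on faces where a factor $U$ or $Y$ vanishes. Strict positivity in the open region then follows once a factor $UY$ has been divided out where necessary.

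The key step is the $A_2$ term at small $U$ and moderate $M$. There the balancing between $A_1M$, $A_2M^2$ and $A_3M^3$ is genuinely needed, and I expect the sign of the Bernstein coefficients, or the AM--GM constant, to be the tight point of the whole argument.
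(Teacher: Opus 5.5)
Your reduction to $N$ is the paper's and is correct. You substitute $k\eta=(1+M)Uy/M$, $\eta^2$ and $k\eta^3$, insert the lower bound for $t^2$ (allowed since $C_t>0$), and multiply by $M^3(1+My)/y^2$; the expansion then gives exactly $A_0,\dots,A_4$. Your sign checks for $A_0,A_1,A_3,A_4$ are also fine.

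The gap is the positivity of $N$, which is the real content of the statement, and neither of your routes can establish it. The critical region is not ``small $U$, moderate $M$'' but the corner $M\sim U\to0$. Writing $M=UZ$, the terms $A_0$, $A_1M$, $A_2M^2$, $A_3M^3$ are all of order $U^3$, while $A_4M^4=O(U^4)$, and
\[
 \lim_{U\to0}\frac{N(UZ,U,Y)}{U^3}
 =Y(2-Y)+Z\bigl[(2Y^2-2Y+1)-(2Y^2+4Y)Z+(4Y^2+6Y)Z^2\bigr].
\]
At $Y=\frac45$, $Z=\frac3{10}$ the bracket equals $-\frac1{625}$. So $A_1M+A_2M^2+A_3M^3<0$ there for small $U$, and only $A_0=U^3Y(2-Y)$ restores the sign. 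Thus no AM--GM balancing of $A_1M$ against $A_3M^3$, with any constants, can absorb $A_2M^2$.

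Your Bernstein fallback on $0<M\le1$ fails at the same corner, for a structural reason. Among the monomials $M^aU^b$ with $a\le2$, $b\le1$, only $M^2U$ occurs in $N$, with coefficient $-(2Y^2+4Y)$. Consider any box $[0,M_1]\times[0,U_1]\times[Y_0,Y_1]$ and any degrees $(n_M,n_U,n_Y)$. The tensor coefficients with $(M,U)$-index $(2,1)$ are the $Y$-Bernstein coefficients of $-\frac{M_1^2U_1}{\binom{n_M}{2}n_U}(2Y^2+4Y)$. This polynomial is negative on $(0,1]$, so at least one of these coefficients is negative. For degree $(4,3,2)$ on $[0,1]^3$ one gets $b_{2,1,1}=-\frac19$ and $b_{2,1,2}=-\frac13$. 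Every finite subdivision of the region contains a box with a corner on $\{M=U=0\}$, so neither subdivision nor degree elevation helps. Also, $N$ has no factor $U$ or $Y$ to divide out.

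The missing idea is to split at $M=U$ instead of $M=1$, and to blow up the corner.
\begin{itemize}
\item For $M\ge U$, the bound $M^3A_3\ge UM^2A_3$ gives $N\ge A_0+MA_1+M^2(A_2+UA_3)+M^4A_4$. Here $(A_2+UA_3)/U=3U^3Y^2+U^2(2Y^2+6Y)+U(1+2Y-Y^2)+2Y^2+2Y>0$.
\item For $M<U$, put $Z=M/U\in(0,1)$ and divide by $U^3$. The resulting polynomial of tridegree $(4,3,2)$ in $(Z,U,Y)$ has $56$ positive and $4$ zero Bernstein coefficients; the zeros lie on $Z=Y=0$. This yields $N>0$.
\end{itemize}
In these rescaled variables $A_0$ contributes at the same order as the other terms, which is exactly what your arguments leave out.
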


\begin{proof}
At a zero of $S$, put $Y=y(r)$.  Lemmas~\ref{5-21} and
\ref{5-24}, together with \eqref{5-22}, give a lower bound for
$rS'/2$.  The substitutions needed below are
\[
 k\eta=\frac{(1+M)UY}{M},\qquad
 \eta^2=\frac{(1+M)U^2Y^2}{M^2},\qquad
 k\eta^3=\frac{(1+M)^2U^3Y^3}{M^3},
\]
and
\[
 C_t=\frac{(1+M)UY(M-1)}{M}+(M+2)Y+3M+2.
\]
Lemma~\ref{5-24} also shows that $C_t>0$.  We may therefore insert
the contact lower bound without reversing the inequality:
\[
 \frac r2S'\geq\mathcal R
 :=2k\eta^3+(1-4Y)\eta^2-6Y^3-8Y^2
 +C_t\frac{Y\{\eta^2+2Y(Y+2)\}}{1+MY}.
\]
Multiplying by the positive factor $M^3(1+MY)/Y^2$ gives
\begin{align*}
 \frac{M^3(1+MY)}{Y^2}\mathcal R
 ={}&2(1+M)^2U^3Y(1+MY)\\
 &+(1-4Y)(1+M)U^2M(1+MY)\\
 &+C_t\{M(1+M)U^2Y+2M^3(Y+2)\}\\
 &-(6Y+8)M^3(1+MY).
\end{align*}
Direct expansion of the right-hand side gives
\[
 \sum_{j=0}^4A_j(U,Y)M^j=N(M,U,Y).
\]
This proves the first inequality in \eqref{5-29}.

It remains to check that $N>0$.  Four coefficients already have a clear
sign:
\begin{align*}
 A_0&>0,\\
 \frac{A_1}{U^2}
 &=UY^2+4UY+2\left(Y-\frac12\right)^2+\frac12>0,\\
 \frac{A_3}{Y}
 &=3U^2Y(U-1)+4U^2+4Y+6>0,\\
 \frac{A_4}{2}
 &=UY^2+2UY-2Y^2+Y+6>4.
\end{align*}
Only $A_2$ may be negative.  We treat it in two regions.

Suppose first that $M\geq U$.  Since $A_3>0$, we have
\[
 N\geq A_0+MA_1+M^2(A_2+UA_3)+M^4A_4.
\]
Write $\mathbf B_n(s)=(B_0^n(s),\ldots,B_n^n(s))^{\mathsf T}$.
The conversion formula in Lemma~\ref{5-26} gives
\[
 \frac{A_2+UA_3}{U}
 =\mathbf B_3(U)^{\mathsf T}
 \begin{pmatrix}
 0&1&4\\
 \frac13&\frac53&\frac{14}3\\
 \frac23&\frac{10}3&8\\
 1&6&17
 \end{pmatrix}
 \mathbf B_2(Y)>0.
\]
Thus $N>0$ in this region.

Now suppose that $0<M<U$ and set $Z=M/U\in(0,1)$.  After removing
the positive factor $U^3$, the polynomial to be checked is
\begin{align*}
 \frac{N(UZ,U,Y)}{U^3}
 ={}&Y(2-Y)\\
 &+Z\{UY^2+4UY+2Y^2-2Y+1\}\\
 &+Z^2\{5U^2Y^2+2U^2Y-UY^2+2UY+U-2Y^2-4Y\}\\
 &+Z^3Y\{3U^3Y-3U^2Y+4U^2+4Y+6\}\\
 &+2UZ^4\{UY^2+2UY-2Y^2+Y+6\}.
\end{align*}
Its tridegree in $(Z,U,Y)$ is at most $(4,3,2)$.  Applying
Lemma~\ref{5-26} successively in these three variables gives $60$
tensor Bernstein coefficients. More precisely, the polynomial in the right hand side can be written as
\[\sum_{i=0}^{4}\sum_{j=0}^{3}\sum_{\ell=0}^{2}b_{ij\ell}B_i^4(Z)B_j^3(U)B_\ell^2(Y),\]
where \[B_i^n(s)=\binom{n}{i}s^i(1-s)^{n-i}.\]The total number of these Bernstein coefficients is \[(4+1)(3+1)(2+1)=5\cdot4\cdot3=60.\] Exactly four are zero, the other $56$ are positive,  and the smallest positive coefficient is $1/4$.
Consequently it is positive on $(0,1)^3$, and hence
$N>0$ also when $M<U$.
\end{proof}

We now combine the local expansion of $S$ near origin with the previous contact calculation.  The
expansion tells us $S$ positive near the origin.  If a first zero existed, Lemma~\ref{5-24} would allow the lower
bound for $t^2$ to be used with the correct sign.  The variables at that point
would lie in the range of Proposition~\ref{5-27}, which makes the 
derivative of $S$ positive.  This contradicts the behavior required at a
first zero. This is the content of the following

\begin{proposition}
\label{5-32}
For every $0<r\leq R$,
\begin{equation}\label{5-33}
 S(r)>0.
\end{equation}
\end{proposition}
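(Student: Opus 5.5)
The plan is a first-contact argument. The expansion at the origin shows that $S>0$ for small $r$. At a hypothetical first zero, Proposition~\ref{5-27} forces $S'>0$, which is incompatible with a first zero approached from above.

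\emph{Step 1: the sign of $S$ near the origin.} I would use the form $S=r^2+Mt^2-4y-2y^2-\eta^2$ from \eqref{5-16} and insert the expansions already available:
\begin{itemize}
\item from the proof of Lemma~\ref{4-4}, $y=\frac14r^2+\left(\frac1{96}-\frac{\alpha^2}6\right)r^4+O(r^6)$;
\item from \eqref{2-7}, $M=k^2-1=\frac{\beta^2-\alpha^2}{\alpha^2}+O(r^4)$;
\item from \eqref{2-3}, $t^2=r^2F^2=\alpha^2r^4+O(r^6)$;
\item from \eqref{5-9}, $\eta^2=O(r^8)$.
\end{itemize}
The $r^2$ terms cancel, since $r^2-4\cdot\frac14r^2=0$. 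The $r^4$ coefficient is
\[
 (\beta^2-\alpha^2)-4\left(\tfrac1{96}-\tfrac{\alpha^2}6\right)-\tfrac2{16}
 =\beta^2-\tfrac{\alpha^2}{3}-\tfrac16 .
\]
This is positive, because Proposition~\ref{2-4} gives $\beta>\alpha$ and Lemma~\ref{4-2} gives $\alpha>\frac12$. Hence $\beta^2-\frac{\alpha^2}3>\frac{2\alpha^2}3>\frac16$. So $S(r)=c\,r^4+O(r^6)$ with $c>0$, and there is $\delta>0$ such that $S>0$ on $(0,\delta]$. The arithmetic in this step should be checked once more, but the sign only needs the two lower bounds quoted.

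\emph{Step 2: the first-zero contradiction.} Suppose $S$ vanishes somewhere in $(0,R]$. Since $S$ is continuous on $(0,R]$ (all variables are $C^1$ there), there is a least zero $r_0\in(\delta,R]$. Then $S>0$ on $(0,r_0)$ and $S(r_0)=0$, so the left derivative satisfies $S'(r_0)\le0$. This holds also when $r_0=R$, because $S$ is $C^1$ up to $R$ from the left, exactly as in the proof of Lemma~\ref{5-12}.

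On the other hand, at $r_0$ Lemma~\ref{5-21} places the variables in the admissible range: $M>0$, $0<U<1$, $0<y<1$. Lemma~\ref{5-24} gives $C_t>0$ together with the lower bound for $t^2$ at that zero. Proposition~\ref{5-27}, through \eqref{5-29}, therefore yields
\[
 \frac{r_0}{2}S'(r_0)\ \ge\ \frac{y^2}{M^3(1+My)}N(M,U,y)>0 .
\]
This contradicts $S'(r_0)\le0$. Hence $S>0$ on all of $(0,R]$, which is \eqref{5-33}.

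\emph{Expected obstacle.} The real work, the polynomial positivity, is already done in Proposition~\ref{5-27}. The only delicate point left is Step~1: verifying the cancellation at order $r^2$ and the strict sign at order $r^4$. Everything else is bookkeeping: the one-sided derivative at $r_0=R$, and checking that \eqref{5-29} is applied exactly at a zero of $S$, which is the only place it is asserted.
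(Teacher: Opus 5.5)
Your proof is correct and follows the paper's own argument step for step. You get the same $r^4$ expansion of $S$ with leading coefficient $\beta^2-\frac{\alpha^2}{3}-\frac16>0$; you expand $S$ through $Mt^2$ rather than $r^2(1-F^2+f^2)$, which is equivalent. You then run the same least-zero contradiction against \eqref{5-29}.
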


\begin{proof}  Applied to $F$,
\eqref{2-3} gives, for $r$ small, 
\[
 F^2=\alpha^2r^2+O(r^4),\qquad
 y=\frac14r^2+\left(\frac1{96}-\frac{\alpha^2}{6}\right)r^4+O(r^6).
\]
The same expansion for $f$ yields
\[
 r^2(1-F^2+f^2)=r^2+(\beta^2-\alpha^2)r^4+O(r^6).
\]
Moreover, $2y^2=\frac{r^4}{8}+O(r^6)$, while
\eqref{5-9} gives $\eta^2=O(r^8)$.  Hence
\begin{align}
 S(r)
 &=r^2+(\beta^2-\alpha^2)r^4
 -\left\{r^2+\left(\frac1{24}-\frac{2\alpha^2}{3}\right)r^4\right\}
 -\frac18r^4+O(r^6)\notag\\
 &=\left(\beta^2-\frac{\alpha^2}{3}-\frac16\right)r^4+O(r^6).
 \label{5-34}
\end{align}
By Proposition~\ref{2-4} and
Lemma~\ref{4-2},
\[
 \beta^2-\frac{\alpha^2}{3}-\frac16
 >\frac{2\alpha^2}{3}-\frac16=\frac{4\alpha^2-1}{6}>0.
\]
Hence $S>0$ on $(0,\delta]$ for some $\delta>0$.
If $S$ had a zero in $(\delta,R]$, let $r_0$ be its least zero.
Continuity then tells us that $S>0$ on $(0,r_0)$, so $S'(r_0)\leq0$.   At this point
$M(r_0)>0$ and $0<U(r_0),y(r_0)<1$, by
Proposition~\ref{2-4} and the logarithmic-slope bounds.
Thus every denominator in the contact estimate is strictly positive and
Proposition~\ref{5-27} gives
\[
 \frac{r_0}{2}S'(r_0)
 \geq\frac{y(r_0)^2N(M(r_0),U(r_0),y(r_0))}
 {M(r_0)^3(1+M(r_0)y(r_0))}>0,
\]
a contradiction. This completes the proof.
\end{proof}

\section{Extension of the energy identity to Sobolev maps}
\label{6-1}

This section extends the defect identity analyzed in the previous sections from smooth competitors to the full
Sobolev class.  The main issue is the zero Fourier mode near the origin, where
division by $f$ may be singular and a general Sobolev map has no point value.
We place the bounded variable $b$ in a weighted one-dimensional space, prove
density by a logarithmic cutoff, and use it to extend the Picone identity and
identify its equality case.  Sobolev slicing and angular Fourier projections
then control the remaining modes and preserve the boundary trace.  Finally, an
approximation with fixed boundary data gives the exact decomposition
$D_R[u]\geq0$ for every $u\in\Acal_R$, which is used in the last section. 

We start with the following

\begin{definition}
\label{6-2}
Let $\Dcal_R$ be the space
\begin{align}
 \Dcal_R:=\biggl\{b\in H_{\mathrm{loc}}^1((0,R);\C):{}&
 \int_0^R r\bigl(|b'(r)|^2+|b(r)|^2\bigr)\dd r<\infty,\notag\\
 &\lim_{r\to R^{-}}b(r)=0\biggr\},
 \label{6-3}
\end{align}
with norm
\begin{equation*}
 \|b\|_{\Dcal_R}^2
 :=\int_0^R r\bigl(|b'|^2+|b|^2\bigr)\dd r.
\end{equation*}
The value at $R$ is the one-dimensional trace from $H^1((\delta,R))$. 
\end{definition}

\begin{proof}[Completeness of $\Dcal_R$]
A $\Dcal_R$-Cauchy sequence is Cauchy in the two copies of
$L^2((0,R),r\dd r)$ and, for every $\delta>0$, in
$H^1((\delta,R))$.  The local limits identify the weighted derivative,
and the one-dimensional trace at $R$ passes to the limit.  Thus the limit
belongs to $\Dcal_R$ and the convergence is in its norm.
\end{proof}

To include $r=0$, we need uniform bounds for the three weights
in the Picone formula.  Their local expansions follow from the radial
series already proved.  Positivity then extends the bounds to the closed
interval.  The next lemma collects exactly the estimates used in the
closure argument.
\begin{lemma}
\label{6-4}
As $r\to0^{+}$,
\begin{align}
 h(r)&=\beta^2-\alpha^2+O(r^2),
 &h'(r)&=O(r),
 \label{6-5}\\
 \phi(r)&=\frac{\alpha^2}{\sqrt{\beta^2-\alpha^2}}+O(r^2),
 &\phi'(r)&=O(r),
 \label{6-6}  
\end{align}
There are constants $c_R,C_R>0$ such that, for every $0\leq r\leq R$,
\begin{equation}\label{6-8}
 c_R\leq h(r)\leq C_R,
 \qquad
 c_R\leq\phi(r)\leq C_R,
 \qquad
 |h'(r)|+|\phi'(r)|\leq C_Rr.
\end{equation}
Moreover,
\begin{equation*}
 r h(r)\frac{\phi'(r)}{\phi(r)}=O(r^2)
 \qquad\text{as }r\to0^{+},
\end{equation*}
and
\begin{equation}\label{6-9}
 0<\frac{h(r)S(r)}{r}\leq C_Rr
 \qquad\text{for }0<r\leq R.
\end{equation}
\end{lemma}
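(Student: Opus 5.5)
The plan is to read everything off the analytic structure at the origin and then pass to $[0,R]$ by continuity, compactness and positivity. By the analytic formula preceding \eqref{2-3}, $f=rA_f(r^2)$ and $F=rA_F(r^2)$ with $A_f(0)=\beta$ and $A_F(0)=\alpha$. Hence
\[
 h=\frac{f^2-F^2}{r^2}=A_f(r^2)^2-A_F(r^2)^2
\]
is an analytic function of $r^2$. By \eqref{2-3} it equals $\beta^2-\alpha^2+O(r^2)$; since it is even in $r$, its derivative is $h'=O(r)$. This gives \eqref{6-5}. Similarly,
\[
 \phi=\frac{F^2}{r\sqrt{f^2-F^2}}=\frac{A_F(r^2)^2}{\sqrt{A_f(r^2)^2-A_F(r^2)^2}}.
\]
Because $\beta>\alpha$ by Proposition~\ref{2-4}, the radicand is analytic in $r^2$ and positive at $0$, so $\phi$ is analytic in $r^2$ with value $\alpha^2/\sqrt{\beta^2-\alpha^2}$ at $0$. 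Again $\phi'=O(r)$, which gives \eqref{6-6}.

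For \eqref{6-8}, both $h$ and $\phi$ extend continuously to $[0,R]$. They are strictly positive on $(0,R]$ because $d>0$ and $F>0$, and positive at $0$ by the limits just computed. Compactness then gives the uniform bounds $c_R\le h,\phi\le C_R$. For the derivative bound, $h'$ and $\phi'$ are continuous on $(0,R]$ and are $O(r)$ near $0$. Hence $|h'|+|\phi'|\le C_R r$ on $[\delta,R]$ for any fixed $\delta$ (enlarging $C_R$ using $r\ge\delta$), and on $(0,\delta]$ by the local expansion. The boundary weight estimate follows directly: $rh\phi'/\phi=r\cdot O(1)\cdot O(r)/O(1)=O(r^2)$.

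For \eqref{6-9}, positivity is Proposition~\ref{5-32} together with $h>0$. For the upper bound I would use \eqref{5-34}, which gives $S=O(r^4)$ near the origin, so $hS/r=O(r^3)\le Cr$ there. On $[\delta,R]$, $S$ is continuous from \eqref{5-16}, since $y$, $\eta$, $M$ and $t$ are continuous on $(0,R]$, and $hS/r\le C\le (C/\delta)r$ there. An alternative route uses \eqref{5-15}, $hS/r=(-(rh\phi')'+h'\phi)/\phi$, whose right-hand side is $O(r)$ by the analyticity in $r^2$ of $h$ and $\phi$; this avoids needing \eqref{5-34}.

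The only delicate point is checking that $\phi$ really is analytic in $r^2$, that is, that the factor $r$ cancels exactly and the square root has a nonvanishing argument at $0$. This is exactly where $\beta>\alpha$ from Proposition~\ref{2-4} enters. Everything else is compactness.
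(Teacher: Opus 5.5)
Your proposal is correct and follows essentially the paper's proof. You use analyticity of $h=A_f(r^2)^2-A_F(r^2)^2$ and $\phi=A_F(r^2)^2/\sqrt{A_f(r^2)^2-A_F(r^2)^2}$ in $r^2$ (with $\beta>\alpha$), positivity plus compactness on $[0,R]$ for \eqref{6-8}, and Proposition~\ref{5-32} with the origin estimate $S=O(r^4)$ from \eqref{5-34} and compactness away from $0$ for \eqref{6-9}; your alternative bound for \eqref{6-9} via \eqref{5-15} is also valid and simply avoids the explicit expansion of $S$.
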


\begin{proof}
The expansions of $f$ and $F$ at the origin and Proposition~\ref{2-4} give
\[
 h=\beta^2-\alpha^2+O(r^2),
 \qquad
 \phi=\frac{\alpha^2}{\sqrt{\beta^2-\alpha^2}}+O(r^2).
\]
Indeed, $h$ and $\phi$ are analytic functions of $r^2$ near $0$, whence
$h',\phi'=O(r)$.   
Positivity and compactness on $[0,R]$ now give
\eqref{6-8} and
$\frac{rh\phi'}{\phi}=O(r^2)$.  Finally,
Proposition~\ref{5-32} gives $\frac{hS}{r}>0$, while the origin estimate
is $O(r^3)$; compactness away from $0$ gives
\eqref{6-9}.
\end{proof}

We next show that test functions supported away from both $0$ and $R$ are
enough.  Near $r=0$, an ordinary linear cutoff has too much weighted
derivative cost.  A logarithmic cutoff makes that cost tend to zero.  A value
truncation is used first so that the cutoff error is controlled.
\begin{lemma}
\label{6-10}
The space $C_c^\infty((0,R);\C)$ is dense in $\Dcal_R$ with respect to
$\|\cdot\|_{\Dcal_R}$.
\end{lemma}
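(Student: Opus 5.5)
Given $b\in\Dcal_R$, the plan is to approximate it in three stages: first truncate its values, then cut it off near $r=0$ with a logarithmic cutoff and near $r=R$ with a shift or cutoff that uses $b(R)=0$, and finally mollify. Each stage should cost an arbitrarily small amount in $\|\cdot\|_{\Dcal_R}$.

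\emph{Value truncation.} For $N>0$ put $T_N(w)=w\min\{1,N/|w|\}$, the radial projection of $\C$ onto the closed disk of radius $N$, and set $b_N=T_N\circ b$. Since $T_N$ is $1$-Lipschitz with $T_N(0)=0$, we have $b_N\in H^1_{\mathrm{loc}}((0,R);\C)$, $|b_N'|\le|b'|$ a.e., $b_N(R)=0$, and $b_N\to b$ pointwise. Moreover $b_N'=b'$ a.e.\ on $\{|b|<N\}$. On the set $\{|b|\ge N\}$ we have $|b_N'-b'|\le 2|b'|$, and this set shrinks to a null set. Dominated convergence with the integrable majorants $r|b'|^2$ and $r|b|^2$ therefore gives $b_N\to b$ in $\Dcal_R$. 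So it suffices to approximate a bounded $b$ with $|b|\le N$.

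\emph{Logarithmic cutoff at the origin.} For $0<\delta<\delta^{1/2}<R/2$, let $\chi_\delta(r)=0$ for $r\le\delta$ and $\chi_\delta(r)=1$ for $r\ge\sqrt\delta$, and let
\[
\chi_\delta(r)=\frac{\log(r/\delta)}{\log(1/\sqrt\delta)}\quad\text{for }\delta<r<\sqrt\delta .
\]
Then
\[
\int_0^R r|\chi_\delta'|^2\dd r=\frac{1}{\log(1/\sqrt\delta)}\to0 .
\]
Writing $(\chi_\delta b)'-b'=(\chi_\delta-1)b'+\chi_\delta' b$, the first term is controlled by $\int_0^{\sqrt\delta}r|b'|^2\dd r\to0$. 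The second is bounded by $N^2\int r|\chi_\delta'|^2\dd r\to0$, and this is exactly where the truncation is needed. Finally, $\int r|\chi_\delta b-b|^2\dd r\to0$ by dominated convergence. The linear cutoff fails at this step because its weighted derivative cost is of order one, which is why the logarithmic profile is used.

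\emph{Cutoff at $R$ and mollification.} On $[\sqrt\delta,R]$ the weight $r$ is comparable to $1$, so $\chi_\delta b$ lies in $H^1((\delta,R))$ with zero trace at $R$. It can therefore be approximated in $H^1$ by $C_c^\infty((\delta,R))$ functions, by the standard one-dimensional argument of extending by zero past $R$, translating inward, and mollifying. This approximation is also close in $\|\cdot\|_{\Dcal_R}$, because the norms are equivalent on $(\delta,R)$ and everything vanishes on $(0,\delta)$. A diagonal choice of $N$, then $\delta$, then the mollification parameter gives density.

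I expect the main obstacle to be the origin cutoff. It requires both the truncation and the logarithmic profile, and one must check that the weighted derivative cost vanishes. The other steps are routine.
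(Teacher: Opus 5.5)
Your proposal is correct and follows the paper's proof essentially step by step. Both arguments use the amplitude truncation $T_N$ with dominated convergence, then a logarithmic cutoff near $0$ with weighted derivative cost $1/\log(1/\sqrt\delta)$; this is the paper's $\chi_\delta$ with $(\delta^2,\delta)$ replaced by $(\delta,\sqrt\delta)$. Both finish with standard $H_0^1$ density on an interval bounded away from the origin and a diagonal choice. The only slip is minor: translating inward and mollifying gives approximants supported in $(\delta-\varepsilon,R-\varepsilon)\subset(0,R)$ rather than in $(\delta,R)$, which is harmless since the weighted and unweighted norms are still equivalent there.
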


\begin{proof}
A point can be removed at arbitrarily small weighted cost in two dimensions.
A linear cutoff near $0$ has gradient cost of order one and does not give the
needed approximation, whereas the logarithmic cutoff below has cost
$|\log\delta|^{-1}$.  Because that cost is multiplied by $|b|^2$, we first
truncate the values of $b$ and only then remove the puncture.

For $L>0$, define the complex-valued amplitude truncation
\[
 T_L(\xi)=\begin{cases}\xi,&|\xi|\leq L,\\ \frac{L\xi}{|\xi|},&|\xi|>L.
 \end{cases}
\]
It is Lipschitz, $|(T_L(b))'|\leq|b'|$ almost everywhere, and it preserves
the trace at $R$.  Thus $T_L(b)\in\Dcal_R$.  For integer $L=n$, local
absolute continuity shows that $T_n(b)\to b$ and
$(T_n(b))'\to b'$ almost everywhere, while
\[
 |T_n(b)-b|^2\leq4|b|^2,\qquad
 |(T_n(b))'-b'|^2\leq4|b'|^2.
\]
Dominated convergence with respect to $r\dd r$ gives
$T_n(b)\to b$ in $\Dcal_R$.  It is therefore enough to approximate a
bounded $b$, say $|b|\leq L$.  For
$0<\delta<\min\{1,R\}$, let
\[
 \chi_\delta(r)=
 \begin{cases}
 0,&0<r\leq\delta^2,\\
 \frac{\log\left(\frac{r}{\delta^2}\right)}{|\log\delta|},&\delta^2<r<\delta,\\
 1,&\delta\leq r<R.
 \end{cases}
\]
On $(\delta^2,\delta)$ one has
$\chi_\delta'(r)=\frac{1}{r|\log\delta|}$, and therefore
\[
 \int_0^Rr|\chi_\delta'|^2\dd r=|\log\delta|^{-1}.
\]
The three relevant errors satisfy
\begin{align*}
 \int_0^Rr|(1-\chi_\delta)b|^2\dd r
 &\leq\tfrac12L^2\delta^2,\\
 \int_0^Rr|(1-\chi_\delta)b'|^2\dd r
 &\leq\int_0^\delta r|b'|^2\dd r,\\
 \int_0^Rr|\chi_\delta'b|^2\dd r
 &\leq L^2|\log\delta|^{-1}.
\end{align*}
All tend to zero as $\delta\downarrow0$, and
$(\chi_\delta b-b)'=(\chi_\delta-1)b'+\chi_\delta'b$; hence
$\chi_\delta b\to b$ in $\Dcal_R$.  For fixed $\delta$, the restriction
of $\chi_\delta b$ to $(\delta^2,R)$ lies in
$H_0^1((\delta^2,R))$.  On this interval the weighted and ordinary
$H^1$ norms are equivalent, so standard density followed by extension by
zero gives approximants in $C_c^\infty((0,R);\C)$.  A diagonal choice
completes the proof.
\end{proof}

Lemma~\ref{5-19} gives the Picone identity only for functions away from the
origin.  The estimates in Lemma~\ref{6-4} make both sides continuous in the
weighted norm.  Density therefore removes the temporary restriction away
from $r=0$.
The strict positivity of the $S$-term also identifies the only equality
case.
\begin{proposition}
\label{6-11}
For every $b\in\Dcal_R$, the form
\begin{equation*}
 Q_0[b]
 :=\pi\int_0^R\left(r h|b'|^2+h'|b|^2\right)\dd r
\end{equation*}
is well defined and satisfies
\begin{align}
 \frac{Q_0[b]}{\pi}
 ={}&\int_0^R r h\phi^2
 \left|\left(\frac{b}{\phi}\right)'\right|^2\dd r
 +\int_0^R\frac{hS}{r}|b|^2\dd r.
 \label{6-12}
\end{align}
In particular,
\begin{equation*}
 Q_0[b]\geq0,
 \qquad
 Q_0[b]=0\quad\Longleftrightarrow\quad b=0
 \quad\text{in }\Dcal_R.
\end{equation*}
\end{proposition}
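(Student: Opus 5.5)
The plan is to prove the identity \eqref{6-12} first for $b\in C_c^\infty((0,R);\C)$, extend it to all of $\Dcal_R$ by continuity and the density Lemma~\ref{6-10}, and then read off the sign and the equality case.

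For $b\in C_c^\infty((0,R);\C)$ with support in $[\rho,R)$, Lemma~\ref{5-19} applies on $[\rho,R]$. The bracket term vanishes at both ends because $b(\rho)=b(R)=0$, and both integrands vanish on $(0,\rho)$. So \eqref{6-12} holds on this dense class.

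Next I check that each of the three quadratic forms is bounded on $\Dcal_R$, using Lemma~\ref{6-4}.
\begin{itemize}
\item By \eqref{6-8}, $|rh|b'|^2|\le C_R r|b'|^2$ and $|h'||b|^2\le C_R r|b|^2$. Hence $Q_0$ is absolutely convergent and continuous in $\|\cdot\|_{\Dcal_R}$. This is also the well-definedness claim.
\item For the Picone square, write $\phi(b/\phi)'=b'-(\phi'/\phi)b$. Since $|\phi'/\phi|\le C_R r/c_R\le C$ on $[0,R]$, the term $\int rh\phi^2|(b/\phi)'|^2$ is bounded by $C\|b\|^2_{\Dcal_R}$.
\item By \eqref{6-9}, $0\le hS/r\le C_R r$, so the $S$-term is bounded by $C_R\|b\|_{\Dcal_R}^2$.
\end{itemize}
Each form is a continuous Hermitian quadratic form, being of the type $\int w|Lb|^2$ with bounded linear $L$. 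Approximating $b$ by $b_n\in C_c^\infty$ via Lemma~\ref{6-10} and passing to the limit on both sides of \eqref{6-12} therefore proves the identity on all of $\Dcal_R$.

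Nonnegativity of $Q_0$ is then immediate, since both terms on the right of \eqref{6-12} are nonnegative. The main point is the equality case. Suppose $Q_0[b]=0$. Both nonnegative terms must vanish, in particular $\int_0^R \frac{hS}{r}|b|^2\dd r=0$. By Proposition~\ref{5-32} and the positivity of $h$, the weight $hS/r$ is strictly positive on $(0,R]$, so $b=0$ almost everywhere. Since $b\in H^1_{\mathrm{loc}}$, in fact $b\equiv0$, i.e.\ $b=0$ in $\Dcal_R$. The first term (or, alternatively, $b/\phi$ constant together with $b(R)=0$) gives the same conclusion. The converse direction is trivial.

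I expect the only delicate step to be the continuity of the $S$-term and the Picone term near $r=0$. That is exactly what the weight bounds \eqref{6-8} and \eqref{6-9} were prepared for, so I do not anticipate a genuine obstacle.
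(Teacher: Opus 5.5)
Your proposal is correct and follows the paper's proof essentially step for step. You obtain the identity on $C_c^\infty((0,R);\C)$ from Lemma~\ref{5-19}, where the bracket term vanishes. You extend it to all of $\Dcal_R$ by combining the weight bounds of Lemma~\ref{6-4} (continuity of all three forms) with the density Lemma~\ref{6-10}. Nonnegativity and the equality case then follow from the strict positivity of $hS/r$ on $(0,R]$.
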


\begin{proof}
By Lemma~\ref{6-4}, both sides of
\eqref{6-12} define continuous quadratic forms on
$\Dcal_R$: indeed, $h,\phi,\phi^{-1}$ are bounded,
$|h'|+|\phi'|\leq C_Rr$, and $0<\frac{hS}{r}\leq C_Rr$; in particular,
$h'|b|^2$ is absolutely integrable for $b\in\Dcal_R$.  The identity holds on
$C_c^\infty((0,R);\C)$ by Lemma~\ref{5-19}, because
the boundary term vanishes, and hence on $\Dcal_R$ by
Lemma~\ref{6-10}.  No extra condition $b(0)=0$ is
being imposed: the space $\Dcal_R$ has no trace or prescribed value at the
puncture, and in particular it includes functions having a nonzero finite
limit at $r=0$.  Both integrands on the right are
nonnegative; since $\frac{hS}{r}>0$ on $(0,R]$, equality holds only for $b=0$.
\end{proof}

We now return from the one-dimensional zero mode to maps on the disk.  Sobolev
slicing gives a periodic angular function for almost every radius.  Parseval's
identity then separates the radial and angular parts of the $H^1$ norm.  We
also need the Fourier projections to preserve the boundary trace.
\begin{lemma}
\label{6-13}
Let $u\in H^1(B_R(0);\C)$.  For almost every $r\in(0,R)$, the angular
slice $\theta\mapsto u(re^{i\theta})$ belongs to
$H^1((0,2\pi);\C)$ and has the same trace at $0$ and $2\pi$.  Define

\begin{equation}\label{6-14}
 c_m(r):=\frac{1}{2\pi}\int_0^{2\pi}
 u(r,\theta)e^{-im\theta}\dd\theta,
 \qquad m\in\mathbb Z.
\end{equation}
Then $c_m\in H_{\mathrm{loc}}^1((0,R);\C)$ for every $m\in\mathbb Z$, and
\begin{equation}\label{6-15}
 c_m'(r)=\frac{1}{2\pi}\int_0^{2\pi}
 u_r(r,\theta)e^{-im\theta}\dd\theta
 \quad\text{for almost every }r\in(0,R).
\end{equation}
For $\zeta\in L^2((0,2\pi);\C)$, define its $m$-th angular projection by
\[
 (P_m\zeta)(\theta):=\left(\frac{1}{2\pi}\int_0^{2\pi}
 \zeta(\varphi)e^{-im\varphi}\dd\varphi\right)e^{im\theta}.\]
For functions on the disk, we use the same notation for
$P_mu(r,\theta):=c_m(r)e^{im\theta}$.
Then
\begin{align}
 \int_{B_R(0)}|u|^2\dd x
 &=2\pi\sum_{m\in\mathbb Z}\int_0^R r|c_m|^2\dd r,
 \label{6-17}\\
 \int_{B_R(0)}|u_r|^2\dd x
 &=2\pi\sum_{m\in\mathbb Z}\int_0^R r|c_m'|^2\dd r,
\\
 \int_{B_R(0)}\frac{|u_\theta|^2}{r^2}\dd x
 &=2\pi\sum_{m\in\mathbb Z}\int_0^R
 \frac{m^2}{r}|c_m|^2\dd r.
 \label{6-18}
\end{align}
The angular Fourier projection commutes with the trace map\[
 \Tr:H^1(B_R(0);\C)\to H^{\frac{1}{2}}(\partial B_R(0);\C).\]
\end{lemma}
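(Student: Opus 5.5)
The plan is to pass to polar coordinates and use Fubini, standard facts about one-dimensional Sobolev functions, and Parseval. Consider the diffeomorphism $\Phi(r,\theta)=re^{i\theta}$ from $(\delta,R)\times(0,2\pi)$ onto a slit annulus. For fixed $\delta>0$, the Jacobian $r$ is bounded above and below there. Hence $w:=u\circ\Phi$ belongs to $H^1((\delta,R)\times(0,2\pi))$, with $w_r=u_r\circ\Phi$ and $w_\theta=(u_\theta)\circ\Phi$. The slit has measure zero and plays no role.

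The first step is the slicing. By the Beppo Levi / absolutely-continuous-on-lines characterization of $H^1$ on a product domain, for a.e. $r$ the slice $\theta\mapsto w(r,\theta)$ lies in $H^1((0,2\pi))$, and its classical derivative is $w_\theta(r,\cdot)$. To get matching traces at $0$ and $2\pi$, repeat the argument with the angular window shifted, say $\theta\in(-\pi,\pi)$. Equivalently, approximate $u$ in $H^1$ by smooth functions $u_n$ on the annulus, for which the slices are genuinely $2\pi$-periodic. Fubini gives $u_n(r,\cdot)\to u(r,\cdot)$ in $H^1((0,2\pi))$ for a.e. $r$ along a subsequence. Since the embedding $H^1\subset C$ holds in one dimension, periodicity passes to the limit. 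Taking $\delta=1/j$ and a countable union of null sets covers a.e. $r\in(0,R)$.

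The second step handles the coefficients $c_m$. For $\psi\in C_c^\infty((\delta,R))$, Fubini and the weak $r$-derivative of $w$, tested against $\psi(r)e^{-im\theta}$, give $\int c_m\psi'\,dr=-\int\bigl(\frac1{2\pi}\int w_re^{-im\theta}d\theta\bigr)\psi\,dr$. The right-hand inner integral lies in $L^2_{\mathrm{loc}}$ by Cauchy--Schwarz, which yields \eqref{6-15} and $c_m\in H^1_{\mathrm{loc}}$. Parseval on each slice gives $\int_0^{2\pi}|u|^2=2\pi\sum|c_m|^2$, and similarly for $u_r$ using \eqref{6-15}. For $u_\theta$, the periodicity from the first step lets us integrate by parts with no boundary term, so the Fourier coefficients of $u_\theta$ are $imc_m$. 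Multiplying by $r$ (respectively $r^{-1}$ for the angular term), integrating in $r$, and applying Tonelli to the nonnegative series gives \eqref{6-17}--\eqref{6-18}. The periodicity claim is the only delicate point here: without it, the $u_\theta$ identity would pick up boundary terms.

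The last step is commutation with the trace, which I expect to be the main obstacle, though a mild one. By \eqref{6-17}--\eqref{6-18}, $\|P_mu\|_{H^1}\le C_m\|u\|_{H^1}$. The term $m^2|c_m|^2/r$ is controlled by the angular part of the norm, and $|\nabla(c_me^{im\theta})|^2=|c_m'|^2+m^2|c_m|^2/r^2$. So $P_m$ is bounded on $H^1(B_R)$. On $H^{1/2}(\partial B_R)$ the angular projection is likewise bounded, since it is a Fourier multiplier. For $u\in C^\infty(\overline{B_R})$ the identity $\Tr P_mu=P_m\Tr u$ is immediate, because the trace is restriction to $r=R$. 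Density of smooth maps in $H^1$ and continuity of both sides then extend the identity to all $u$. The one point to check is that $P_mu$, defined by the slices, is really an $H^1(B_R)$ function across the origin and not just on annuli. I would settle this by noting that $P_m$ coincides on smooth functions with the averaging $\frac1{2\pi}\int_0^{2\pi}e^{-im\varphi}u(e^{i\varphi}x)\,d\varphi$. That is an average of rotations, which are isometries of $H^1(B_R)$, so $P_m$ is bounded on all of $H^1(B_R)$ and the density argument closes.
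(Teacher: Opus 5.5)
Your proposal is correct and follows the same route as the paper: Sobolev slicing and Parseval on annuli in polar coordinates, integration against the weights $r,r,r^{-1}$, and trace commutation from boundedness of $P_m$ on $H^1(B_R(0))$ and $H^{\frac12}(\partial B_R(0))$ together with the smooth case and density. You also supply details the paper leaves implicit, namely periodicity of almost every slice via smooth approximation and the rotation-average formula giving boundedness of $P_m$ on $H^1$ across the origin; the only loose point is that $\psi(r)e^{-im\theta}$ is not compactly supported in $\theta$, which an obvious cutoff in $\theta$ or the absolute continuity of $w$ along lines in $r$ repairs.
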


\begin{proof}
On each annulus the assertions follow from Sobolev slicing and Parseval on
the periodic cylinder.  Multiplication by the polar weights $r,r,r^{-1}$,
followed by $\delta\downarrow0$, gives
\eqref{6-17}--\eqref{6-18}; smooth density
extends the identities to $H^1(B_R(0))$.  Finally, $P_m$ is bounded on
both $H^1(B_R(0))$ and $H^{\frac{1}{2}}(\partial B_R(0))$, so the identity
$\Tr(P_mu)=P_m(\Tr u)$ follows from the smooth case.  In particular,
$c_m(R)e^{im\theta}=P_m(\Tr u)$.
\end{proof}
  The next lemma gives both the nonzero-mode estimate and the
weighted estimate for the zero mode.
\begin{lemma}
\label{6-20}
Let $u\in H^1(B_R(0);\C)$ satisfy
\begin{equation*}
 P_0(\Tr u)=0
 \quad\text{in }H^{\frac{1}{2}}(\partial B_R(0);\C).
\end{equation*}
Let $c_m$ be defined by \eqref{6-14}, and set, for
$m\neq0$,
\begin{equation}\label{6-21}
 a_m:=\frac{c_m}{f}
 \quad\text{on }(0,R).
\end{equation}
Then
\begin{equation*}
 0\leq\sum_{m\neq0}Q_m[a_m]
 \leq C_R\|u\|_{H^1(B_R(0);\C)}^2.
\end{equation*}
For the zero coefficient,
\begin{equation}\label{6-22}
 b:=\frac{rc_0}{f}
\end{equation}
belongs to $\Dcal_R$, and
\begin{equation*}
 \|b\|_{\Dcal_R}
 \leq C_R\|u\|_{H^1(B_R(0);\C)}.
\end{equation*}
Both assignments are continuous under strong $H^1$ convergence: if
$u_j\to u$ and all their boundary traces have zero angular mean, then
\[
 \sum_{m\neq0}Q_m[a_{m,j}]\to\sum_{m\neq0}Q_m[a_m],
 \qquad Q_0[b_j]\to Q_0[b].
\]
\end{lemma}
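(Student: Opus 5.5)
The plan is to reduce everything to one-dimensional weighted estimates on each coefficient, using the slicing and Parseval identities of Lemma~\ref{6-13} together with the behaviour of $f$ near $0$ and on $[\delta,R]$. Since $f(r)=rA_f(r^2)$ with $A_f(0)=\beta>0$, and $f$ is positive and smooth on $(0,R]$, there are constants with $c_Rr\le f(r)\le C_Rr$ and $|f'|\le C_R$ on $(0,R]$. Also $d=r^2h$ with $c_R\le h\le C_R$ by Lemma~\ref{6-4}. Hence $d/f^2$ is bounded above and below.

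\textbf{Nonzero modes.} For $m\ne0$ write $a_m'=c_m'/f-c_mf'/f^2$. Then
\[
 rd|a_m'|^2\le C_R\Bigl(r|c_m'|^2+\frac{|c_m|^2}{r}\Bigr)
\]
and
\[
 (m^2-1)\frac{d}{r}|a_m|^2\le C_R\frac{m^2}{r}|c_m|^2 ,
\]
using $r^2|f'|^2/f^2\le C_R$. Since $m\ne0$ gives $1\le m^2$, summing over $m\ne0$ and applying \eqref{6-17}--\eqref{6-18} bounds $\sum_{m\ne0}Q_m[a_m]$ by $C_R\int_{B_R(0)}(|u_r|^2+r^{-2}|u_\theta|^2)\dd x\le C_R\|u\|_{H^1}^2$. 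Nonnegativity holds because $d>0$ and $m^2-1\ge0$. The local $H^1$ regularity of $a_m$ on $(0,R)$ follows from that of $c_m$ in Lemma~\ref{6-13}.

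\textbf{Zero mode.} Here $b=rc_0/f=c_0/A_f(r^2)$, and $1/A_f(r^2)$ is smooth and bounded, with bounded derivative on $[0,R]$. So $|b|\le C|c_0|$ and $|b'|\le C(|c_0'|+|c_0|)$. By the $m=0$ terms of \eqref{6-17} and the radial identity,
\[
 \int_0^Rr(|b|^2+|b'|^2)\dd r\le C_R\int_0^Rr(|c_0|^2+|c_0'|^2)\dd r\le C_R\|u\|_{H^1}^2 .
\]
For the condition $b(R^-)=0$: $c_0$ is in $H^1((\delta,R))$, and its one-dimensional trace at $R$ equals the angular mean of $\Tr u$ by the commutation statement in Lemma~\ref{6-13}. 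That mean is zero by hypothesis, and $f(R)=1$, so $b\in\Dcal_R$.

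\textbf{Continuity.} The maps $u\mapsto b$ and $u\mapsto(a_m)_{m\ne0}$ are linear. The estimates above show that $u\mapsto b$ is bounded from $H^1$ into $\Dcal_R$. They also show that $u\mapsto(a_m)_{m\ne0}$ is bounded into the Hilbert space whose squared norm is $\sum_{m\ne0}\int_0^R\bigl(rd|a_m'|^2+(m^2-1)\frac{d}{r}|a_m|^2\bigr)\dd r$. Since $\sum_{m\ne0}Q_m$ is a bounded nonnegative quadratic form on that space, it is continuous under strong convergence. For the zero mode, $Q_0$ is a continuous quadratic form on $\Dcal_R$ by Proposition~\ref{6-11}, using the bound $|h'|\le C_Rr$. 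Composing gives $Q_0[b_j]\to Q_0[b]$.

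\textbf{Main obstacle.} The delicate point is the trace claim, $b(R^-)=0$. It requires identifying the one-dimensional limit of $c_0$ at $R$ with $P_0(\Tr u)$. This follows from the last assertion of Lemma~\ref{6-13} combined with the continuity of the one-dimensional trace $H^1((\delta,R))\to\C$, approximating $u$ first by smooth maps.
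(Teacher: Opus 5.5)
Your proposal is correct and follows essentially the same route as the paper: you write $b=c_0/A_f(r^2)$ (the paper's $c_0/q$), bound $rd|a_m'|^2+(m^2-1)\frac{d}{r}|a_m|^2$ by $C_R(r|c_m'|^2+\frac{m^2}{r}|c_m|^2)$ using $|f'/f|\le C_R/r$ and the boundedness of $d/f^2$, sum with the Parseval identities of Lemma~\ref{6-13}, obtain $b(R)=0$ from trace commutation, and deduce continuity from linearity together with boundedness of the forms. The only differences are cosmetic: you bound $d/f^2$ through $d=r^2h$ and Lemma~\ref{6-4} instead of $d=f^2(1-(F/f)^2)$, and you cite Proposition~\ref{6-11} for the continuity of $Q_0$ where the paper invokes polarization of the bounded form.
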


\begin{proof}
Write $f=rq$, where $q$ is positive and smooth on $[0,R]$; hence
$q,q^{-1}\in W^{1,\infty}(0,R)$ and $\left|\frac{f'}{f}\right|\leq\frac{C_R}{r}$.  Direct
differentiation of $a_m=\frac{c_m}{f}$, together with
$d=f^2\left(1-\left(\frac{F}{f}\right)^2\right)$, gives for $m\ne0$
\[
 rd|a_m'|^2+(m^2-1)\frac d r|a_m|^2
 \leq C_R\left(r|c_m'|^2+\frac{m^2}{r}|c_m|^2\right).
\]
Summation and Lemma~\ref{6-13} prove the nonzero-mode bound.
Angular averaging is contractive in $H^1$.  Together with
\[
 b=\frac{c_0}{q},\qquad
 b'=\frac{c_0'}q-\frac{c_0q'}{q^2}.
\]
these two identities, this gives the zero-mode bound.  Trace commutation also
gives $b(R)=0$.
Applying the two estimates to $u_j-u$ proves convergence in the weighted
direct-sum seminorm and in $\Dcal_R$; the triangle inequality and the
standard polarization of the bounded form $Q_0$ give the two asserted
limits.
\end{proof}

We now have all estimates needed to remove the smoothness assumption.  A
general admissible map can be approximated without changing its boundary
trace.  The energies, the quartic term, and every Fourier form pass to the
limit.  Thus the exact defect identity and its nonnegative sign hold on the
full Sobolev class.
\begin{proposition}
\label{6-23}
For every $u\in\Acal_R$, let $\sigma=\frac{F}{f}$ and define
$c_m,a_m,b$ by
\eqref{6-14}, \eqref{6-21}, and
\eqref{6-22}.  Then
\begin{equation} \label{6-24}
\begin{aligned}
 D_R[u]
 :=&\bigl(\mathcal E_R(u)-\mathcal E_R(fe^{i\theta})\bigr)\\
 &-\bigl(\EGL(\sigma u;B_R(0))-\EGL(V;B_R(0))\bigr)\\
 =&Q_0[b]+\sum_{m\neq0}Q_m[a_m]\\
 &+\frac{1}{4}\int_{B_R(0)}
 \left(1-\left(\frac{F}{f}\right)^4\right)
 (|u|^2-f^2)^2\dd x\geq0.
\end{aligned}    
\end{equation}

\end{proposition}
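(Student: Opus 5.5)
The plan is to prove \eqref{6-24} by density from the smooth case, Proposition~\ref{3-13}, keeping the boundary trace fixed. Given $u\in\Acal_R$, write $u=U_0+w$ with $U_0:=fe^{i\theta}$, which lies in $\Acal_R^\infty$ because $fe^{i\theta}=A_f(|x|^2)(x_1+ix_2)$ is smooth. Then $w\in H_0^1(B_R(0);\C)$, and we choose $w_j\in C_c^\infty(B_R(0);\C)$ with $w_j\to w$ in $H^1$. The maps $u_j:=U_0+w_j$ lie in $\Acal_R^\infty$, converge to $u$ in $H^1$, and have trace $e^{i\theta}$, whose angular mean vanishes. Proposition~\ref{3-13} therefore gives \eqref{6-24} for each $u_j$. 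The task is to pass every term to the limit.

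\emph{Left-hand side.} $\mathcal E_R$ is continuous on $H^1(B_R(0);\C)$ in dimension two. The Dirichlet part is clearly continuous. For the quartic potential, $H^1\hookrightarrow L^4$ gives $|u_j|^2\to|u|^2$ in $L^2$. By Lemma~\ref{2-6}, multiplication by $\sigma\in W^{1,\infty}$ is bounded on $H^1$, so $\sigma u_j\to\sigma u$ in $H^1$ and $\EGL(\sigma u_j;B_R(0))\to\EGL(\sigma u;B_R(0))$. Hence $D_R[u_j]\to D_R[u]$ with the left-hand definition.

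\emph{Right-hand side.} Since $u_j-u\in H_0^1$, all traces have zero angular mean, so Lemma~\ref{6-20} applies. It gives $\sum_{m\neq0}Q_m[a_{m,j}]\to\sum_{m\neq0}Q_m[a_m]$ and $b_j\to b$ in $\Dcal_R$. Then Proposition~\ref{6-11}, via the continuity of $Q_0$ on $\Dcal_R$ from Lemma~\ref{6-4}, gives $Q_0[b_j]\to Q_0[b]$. Note that $b$ is defined directly from $c_0$ of the Sobolev map, so no point value at the origin is needed.

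For the quartic remainder, the weight $0\le 1-(F/f)^4\le1$ is bounded, and $(|u_j|^2-f^2)\to(|u|^2-f^2)$ in $L^2$ by the $L^4$ convergence. So that integral converges as well. Combining the two sides yields the identity for $u$.

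\emph{Sign.} Nonnegativity then follows termwise. $Q_0[b]\ge0$ by Proposition~\ref{6-11}. Each $Q_m[a_m]\ge0$ because $d>0$ and $m^2-1\ge0$. The quartic integrand is nonnegative by Proposition~\ref{2-4}.

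The main obstacle is ensuring that the Fourier-mode quantities for the limit map agree with those defined in Lemma~\ref{6-20}. In particular, the zero mode is defined only through $\Dcal_R$, and its convergence relies on $Q_0$ being bounded there, which is exactly where the estimates $|h'|\le C_Rr$ and the bounds on $\phi$ enter. Approximating with fixed boundary data, via $U_0+C_c^\infty$, avoids any trace issue at $r=R$.
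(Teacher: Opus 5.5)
Your proposal is correct and follows essentially the same route as the paper: you approximate with fixed trace via $u_j=fe^{i\theta}+v_j$ with $v_j\in C_c^\infty(B_R(0);\C)$, apply Proposition~\ref{3-13} to each $u_j$, and pass to the limit term by term. The tools for the limit match the paper's as well: the embedding $H^1\hookrightarrow L^4$, boundedness of multiplication by $\sigma$ from Lemma~\ref{2-6}, and the continuity statements of Lemma~\ref{6-20}, with the sign then read off from Propositions~\ref{6-11} and~\ref{2-4}.
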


\begin{proof}
Let us write
\[
 u=fe^{i\theta}+v,\qquad v\in H_0^1(B_R(0);\C).
\]
Choose $v_j\in C_c^\infty(B_R(0);\C)$ with $v_j\to v$ in $H^1$.  The formula
$f(r)=rA_f(r^2)$ gives
\[
 f(r)e^{i\theta}=A_f(|x|^2)(x_1+ix_2),
\]
which is smooth at the origin.  Thus
$u_j:=fe^{i\theta}+v_j\in\Acal_R^\infty$ and $u_j\to u$ in $H^1$.
Lemma~\ref{2-6} also gives
$\sigma u_j\to\sigma u$ in $H^1$.

The standard embedding $H^1(B_R(0))\hookrightarrow L^4(B_R(0))$,
together with boundedness of multiplication by $\sigma$ on $H^1$, gives
convergence of both energies and of the quartic remainder.

Let $c_{m,j},a_{m,j},b_j$ be the coefficients associated with $u_j$.
Because $\Tr u_j=\Tr u=e^{i\theta}$, trace compatibility gives
$P_0(\Tr u_j)=P_0(\Tr u)=0$.  Lemma~\ref{6-20}
therefore applies to the entire sequence and yields
\[
 Q_0[b_j]\to Q_0[b],\qquad
 \sum_{m\ne0}Q_m[a_{m,j}]\to\sum_{m\ne0}Q_m[a_m].
\]
Together with the energy and quartic convergences above, these limits
allow passage in the single identity of Proposition~\ref{3-13}
for $u_j$, giving \eqref{6-24}.  The series on
the right converges and has finite sum by Lemma~\ref{6-20}; each
summand is nonnegative, while $Q_0[b]\geq0$ by
Proposition~\ref{6-11} and $1-\left(\frac{F}{f}\right)^4>0$ by
Proposition~\ref{2-4}.   
\end{proof}

\section{Global minimality and uniqueness}
\label{7-1}

In this section, we will conclude the proof of our main result. The proof now reduces to adding two nonnegative quantities.  One is the
whole-plane energy difference, and the other is the disk defect just computed.
For equality, the defect formula determines every Fourier coefficient. The next
proposition gives the scaled theorem and its equality case. 
Our main theorem then readily follows from Proposition \ref{7-2}.
\begin{proposition}
\label{7-2}
For every $R>0$ and every $u\in\Acal_R$,
\begin{equation}\label{7-3}
 \mathcal E_R(u)\geq\mathcal E_R(fe^{i\theta}).
\end{equation}
Moreover,\[
 \mathcal E_R(u)=\mathcal E_R(fe^{i\theta})
 \quad\Longleftrightarrow\quad
 u=fe^{i\theta}\quad\text{almost everywhere in }B_R(0).\]
\end{proposition}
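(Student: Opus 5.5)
The inequality \eqref{7-3} comes from adding two nonnegative quantities. For $u\in\Acal_R$, the definition of $D_R[u]$ in Proposition~\ref{6-23} gives
\[
 \mathcal E_R(u)-\mathcal E_R(fe^{i\theta})
 =\bigl(\EGL(\sigma u;B_R(0))-\EGL(V;B_R(0))\bigr)+D_R[u].
\]
The first bracket is nonnegative by Lemma~\ref{3-3}. The second term is nonnegative by \eqref{6-24}. This proves \eqref{7-3}. The reverse implication in the equality statement is trivial.

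For the forward direction, suppose equality holds. Then both nonnegative pieces vanish. In particular $D_R[u]=0$, and since every term in \eqref{6-24} is nonnegative, each term vanishes separately.

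First, $Q_0[b]=0$. Since $b\in\Dcal_R$, Proposition~\ref{6-11} gives $b=0$, so $c_0\equiv0$.

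Second, $Q_m[a_m]=0$ for every $m\neq0$. The weight $d>0$ on $(0,R]$, so $a_m'=0$ almost everywhere. Hence each $a_m$ is constant on $(0,R)$, using $a_m\in H^1_{\mathrm{loc}}$. For $|m|\geq2$, the additional term $(m^2-1)\frac dr|a_m|^2$ must also vanish, which forces $a_m=0$.

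For $m=-1$ and $m=1$, the constants are fixed by the trace. Lemma~\ref{6-13} gives $c_m(R)e^{im\theta}=P_m(\Tr u)=P_m(e^{i\theta})$, and $f(R)=1$. Therefore $a_{-1}(R)=0$ and $a_1(R)=1$, so $a_{-1}\equiv0$ and $a_1\equiv1$.

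Thus $c_1=f$ and all other $c_m$ vanish. By the Parseval identity \eqref{6-17}, this gives $u=fe^{i\theta}$ almost everywhere.

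The quartic term also vanishes, which is consistent with this conclusion but is not needed. The whole-plane piece is likewise not needed for the equality case.

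The only point requiring care is that $a_m$ is defined only on $(0,R)$ and is locally $H^1$. The constant value must therefore be matched to the trace through continuity of $c_m$ up to $r=R$. I would justify this with the one-dimensional trace of $c_m\in H^1((\delta,R))$ together with the trace commutation in Lemma~\ref{6-13}. This is the main, if mild, obstacle; everything else follows directly from the results already established.
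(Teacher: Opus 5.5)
Your proof is correct and follows the paper's argument essentially step for step. You get nonnegativity from Lemma~\ref{3-3} plus Proposition~\ref{6-23}; in the equality case you use $Q_0[b]=0\Rightarrow b=0$ via Proposition~\ref{6-11}, $a_m=0$ for $|m|\geq2$, and $a_{\pm1}$ constant and fixed by the trace identification of Lemma~\ref{6-13} with $f(R)=1$, then conclude by completeness of the Fourier basis, exactly as the paper does (including its use of the one-dimensional trace on an outer annulus).
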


\begin{proof}
Set $\sigma=\frac{F}{f}$.  Lemma~\ref{3-3} and
Proposition~\ref{6-23} give
\[
 \mathcal E_R(u)-\mathcal E_R(fe^{i\theta})
 =D_R[u]+\EGL(\sigma u;B_R(0))-\EGL(V;B_R(0))\geq0,
\]
which proves \eqref{7-3}.

Now let us suppose equality holds. Then both nonnegative terms on the right vanish. In
particular, $D_R[u]=0$. 
In \eqref{6-24}, every summand is finite and nonnegative, so a vanishing sum
forces $Q_0[b]=0$ and
$Q_m[a_m]=0$ for every $m\neq0$.  Note that we can also deduce $|u|=f$, but this alone does not determine the phase and hence is not sufficient for uniqueness.

Proposition~\ref{6-11}
then gives $b=0$, and hence $c_0=\frac{f}{r}b=0$.  If $|m|\geq2$, the strictly
positive zeroth-order weight in
\[
 Q_m[a_m]=\pi\int_0^R
 \left(rd|a_m'|^2+(m^2-1)\frac{d}{r}|a_m|^2\right)\dd r
\]
gives $a_m=c_m=0$.  For $m=\pm1$, the same identity gives
$a_m'=0$ locally on $(0,R)$, so $a_m$ is constant.  On every outer annulus
$a_m$ has an $H^1$ trace at $R$; the boundary identification in
Lemma~\ref{6-13}, together with $f(R)=1$, gives
$a_1(R)=1$ and $a_{-1}(R)=0$.  Thus $c_1=f$ and $c_m=0$ for $m\neq1$.
Completeness of the angular Fourier basis yields
$u=fe^{i\theta}$ almost everywhere.  The converse is immediate and the proof is thus completed.
\end{proof}

\medskip
\noindent\textbf{Acknowledgements.}
Hong-Ge Chen was supported by the National Natural Science Foundation of China (Grant Nos.~12201607 and~12571249) and the Postdoctoral Project of Hubei Province (Grant No.~2024HBBHXF095). Yong Liu was supported by  National Natural Science Foundation of China No. 12471204. Juncheng Wei was supported by National R\&D Program of China (Grant No. 2022YFA1005602), and Hong Kong General Research Fund ``New frontiers in singular limits of nonlinear partial differential equation". Wen Yang was supported by the National Key Research and Development Program of China (Grant No.~2022YFA1006800), the National Natural Science Foundation of China
(Grant No. 12531010), the Science and Technology Development Fund of the Macao SAR (FDCT, Grant No.~0070/2024/\allowbreak RIA1), the Multi-Year Research Grants of the University
of Macau (Grant No. MYRG-GRG2025-00051-FST), and the University of Macau Development Foundation (Grant No.~TISF/2025/006/FST). The authors acknowledge the use of AI tools. All mathematical arguments and proofs in the final manuscript were checked and written by the authors.
\bigskip

\noindent {\bf Data availability statement}: There are no data associated with this article.

\end{document}